\newcommand{\rank}{\textsf{rank}}
\newcommand{\Symbol}{\mathcal{S}}
\newcommand{\Jac}{\mathcal{J}}
\newcommand{\Sing}{\mathfrak{S}}
\newcommand{\LD}{\textsc{ld}}
\newcommand{\LCD}{\textsc{lcd}}
\newcommand{\ODE}{\sc{ode}}
\newcommand{\DAE}{\sc{dae}}
\newcommand{\ILP}{\sc{ilp}}
\newcommand{\randpoint}{\ensuremath{\mathfrak{a}}}
\newcommand{\R}{\mathbb{R}}
\newcommand{\ie}{{i}.{e}.}
\newcommand{\eg}{{e}.{g}. }
\newcommand{\etc}{{etc}.}
\newtheorem{example}{Example}[section]
\newtheorem{remark}{Remark}[section]
\newtheorem{prop}{Proposition}[section]
\newtheorem{define}{Definition}[section]
\def\ctotDer{\textbf{D}}
\def\proof{\textsc{Proof.}}
\def\foorp{\hfill$\square$}
\crefname{hypothesis}{Hypothesis}{Hypotheses}
\newenvironment{breakablealgorithm}
{
	\begin{center}
		\refstepcounter{algorithm}
		\hrule height.8pt depth0pt \kern2pt
		\renewcommand{\caption}[2][\relax]{
			{\raggedright\textbf{\ALG@name~\thealgorithm} ##2\par}%
			\ifx\relax##1\relax 
			\addcontentsline{loa}{algorithm}{\protect\numberline{\thealgorithm}##2}%
			\else 
			\addcontentsline{loa}{algorithm}{\protect\numberline{\thealgorithm}##1}%
			\fi
			\kern2pt\hrule\kern2pt
		}
	}{
		\kern2pt\hrule\relax
	\end{center}
}
\title{Index Reduction for Degenerated Differential-Algebraic Equations by Embedding and Real Numerical Algebraic Geometry \thanks{Submitted to the editors DATE.
\funding{This work is partially supported by the projects of Chongqing (2020000036, 2021000263,cstc2020yszx-jcyjX0005) and the National Natural Science Foundation of China (11771421)}}}
\author{
{Wenqiang Yang}
\thanks{Chongqing Key Laboratory of Automated Reasoning and Cognition, Chongqing Institute of Green and Intelligent Technology, Chinese Academy of Sciences. \Letter  Corresponding Author: wuwenyuan@cigit.ac.cn.}
\and{}\thanks{Chongqing School, University of Chinese Academy of Sciences.}
\and{Wenyuan Wu\footnotemark[2] \Letter}
\and{Greg Reid\footnotemark[4]}\thanks{Mathematics Department, University of Western Ontario.}}
\newcommand*{\addFileDependency}[1]{
  \typeout{(#1)}
  \@addtofilelist{#1}
  \IfFileExists{#1}{}{\typeout{No file #1.}}
}
\newcommand*{\myexternaldocument}[1]{%
    \externaldocument{#1}%
    \addFileDependency{#1.tex}%
    \addFileDependency{#1.aux}%
}
\begin{document}
\maketitle
\begin{abstract}
\sloppy{}
To find consistent initial data points (witness points) for a system of differential-algebraic equations, requires the identification of its
missing (hidden) constraints arising from differentiation of the system.
An efficient class of so-called structural methods exploiting a dependency graph for this task was initiated by Pantiledes.
The choice of graph is related to the choice of ordering that minimize the solution dimension of the problem.
However, this method may fail. More complete methods rely on differential-algebraic geometry but suffer from other issues (e.g. high complexity and instability on approximate data).
In this paper we give a new class of efficient structural methods combined with new tools from numerical real algebraic geometry that has much improved completeness properties.
Existing structural methods may fail for a system of differential-algebraic equations if its Jacobian matrix after differentiation is still singular due to symbolic cancellation or numerical degeneration.
Existing structural methods can only handle degenerated cases caused by symbolic cancellation. However, if a system has parameters, then its parametric Jacobian matrix may be still singular after application of the structural method for certain values of the parameters. This case is called numerical degeneration.

In this paper, for polynomially nonlinear systems of differential-algebraic equations, numerical methods are given to solve both degenerated cases using numerical real algebraic geometry. First, we introduce a witness point method, which produces at least one witness point on every constraint component (manifold).
This can help to ensure constant rank and detection of degeneration on all components of such systems.
Secondly, we present a Constant Rank Embedding Lemma, then based on this lemma we propose an Index Reduction by Embedding (IRE) method which can construct an equivalent system with a full rank Jacobian matrix. Thirdly, IRE leads to a global
structural differentiation method, to solve degenerated differential-algebraic equations on all components numerically.
Application examples of models from circuits, mechanics, and motion are used to demonstrate our method and its advantages.
\end{abstract}

\begin{keywords}
\sloppy{}
real algebraic geometry, constant rank, witness points, differential-algebraic equations, structural methods.
\end{keywords}

\begin{AMS}
  68Q25, 68R10, 68U05
\end{AMS}

\section{Introduction}\label{sec:intro}
\sloppy{}

Systems of differential-algebraic equations are widely used to model and simulate dynamical systems such as mechanical systems,
electrical circuits, and chemical reaction plants {\cite{Ilchmann15}}.
We will often use the abbreviation
{\DAE}\footnote{A system of differential-algebraic equations will be denoted by {\DAE} while {\DAE}s will denote several such systems.}
for a system of differential-algebraic equations.
The name arose since such systems usually contain differential equations with derivatives and algebraic
equations without derivatives.
It was initially believed that any such {\DAE} could be easily
converted by coordinate changes and eliminations to a traditional explicit {\ODE} - the so-called
underlying {\ODE}\footnote{A system of explicit ordinary differential equations,
in solved form for their highest derivatives, will be denoted by {\ODE}.}.
However, they are now recognized as common and natural in applications, to the extent
that several modeling environments have them as their central object.
Indeed, the explicit underlying {\ODE} may be too expensive to un-cover, and modern codes for numerical
solution of a {\DAE} have made this unnecessary in most cases.  Even the idea there is a unique underlying {\ODE} is misleading since the
{\DAE} may yield several component manifolds with different behavior and different underlying {\ODE} on each component.

{\DAE}s are a subset of the set of general systems of partial differential equations.  The great geometer Cartan conjectured but was not able to prove that
after a finite number of prolongations (differentiations) of such systems, they would become involutive, and a local existence and uniqueness
theorem could be stated for their solutions.  Another greater geometer Kuranishi eventually proved this result, albeit under conditions that are difficult
to render explicitly {\cite{Kuranishi57}}. The number of prolongations (differentiations) to
uncover the underlying {\ODE} (i.e. the differential index) corresponds to this result for {\DAE}s; and it is equivalent to obtaining
all the constraints on initial data for existence and uniqueness of solutions. We note that the differential index and underlying
{\ODE} may be different on different components of the {\DAE}.

One may try to numerically solve a {\DAE} directly without reducing its index by prolongation.  However, properly posing initial values for a
{\DAE} requires that they satisfy the missing constraints, and hence implicitly requires knowledge of the prolonged form of the {\DAE}.
This direct approach is prone to order reduction, instability, inaccuracy and the tendency for the approximate solution to drift off the constraints that increases with the index.
It is usually only feasible for low index problems {\cite{LIU201593,Skvortsov12,LIU2007748,Awawdeh09,Pryce98}}.
For references related to differential and \textit{perturbation index} see Hairer and Wanner {\cite{Hairer91}}, Campbell and Gear {\cite{Campbell95}}, and
Reid, Lin and Wittkopf {\cite{Reid01}}.

Note that the name {\DAE} misleadingly suggests that a {\DAE} can be partitioned into differential equations and non-differential equations (algebraic equations) where the latter
are regarded as constraints. Consider the {\DAE}
\begin{equation}\label{point}
  u' u''  +  u u' + x = 0,  ((u')^2 + u^2 + x^2 - 1)((u')^2 + u^2 + x^2 - 4 ) = 0
\end{equation}
where $u$ is a unknown function of $x$.
Then $((u')^2 + u^2 + x^2 - 1)((u')^2 + u^2 + x^2 - 4 ) = 0$ is a constraint even though it contains derivatives.
Geometrically there are $2$ constraint components (spheres of radius $1$ and $2$).

In contrast to the above direct approach, indirect and widely used approaches first use
index reduction only by differentiation \cite{Pryce01,Pryce98,Pantelides88,Gear88,Fritzson14} followed
by consistent initial point determination \cite{Brenan95,Taihei19,Pantelides88}.

In this paper we make contributions to such indirect approaches.
In particular for polynomially nonlinear {\DAE}s we apply a new efficient prolongation method to reduce their index,
which implicitly gives the hidden constraint components of initial data, then determine consistent initial points using new methods from real numerical algebraic geometry.
\subsection{Previous Work}\label{ssec:pre-work}

\subsubsection{Consistent Point}
Finding at least one consistent point on each constraint component of a {\DAE}, is an important problem as it
determines the initial point for a numerical solution \cite{Taihei19,Pantelides88}.
Commonly used methods to obtain such consistent initial points are the approximation method \cite{Leimkuhler91} and the transformation method \cite{Vieira20011,Brown98}. The approximation method starts with a guess for an initial point and then iteratively refines it {\cite{Shampine02}}. A good guess is critical for convergence of the iterative method {\cite{Shampine02}}.

We note that most treatments assume there is just one constraint component, and indeed
that equations whose set of solutions correspond to this constraint component (constraint equations) can be
explicitly found.  Geometrically constraint components are projections from the higher dimensional space with the
derivatives regarded as indeterminates (the so-called Jet space of the {\DAE}).  For linear {\DAE}s and polynomially nonlinear {\DAE}s with rational coefficients elimination algorithms are known for explicitly rendering equations for
their constraints.  But no algorithms are known for the general case of analytic {\DAE}s, though there are some known
for subclasses of analytic {\DAE}s.

\subsubsection{Index Reduction}
Indices are used to measure how far a {\DAE} is from a {\DAE} which includes its missing constraints, or is implicitly equivalent to an {\ODE}. The \textit{Kronecker index} {\cite{Gear83,Lamour13}} is applicable to constant coefficient linear {\DAE}. The \textit{tractability index} {\cite{Lamour13,Roswitha02,GUZEL06}} and \textit{strangeness index} {\cite{KUNKEL94}} applies to linear variable coefficient {\DAE}. Further, the \textit{tractability index} can be extended to a \textit{topological index} {\cite{Tischendorf98}} in some applications, and the strangeness index also can be extended to non-square {\DAE} {\cite{Kunkel06}}.  The \textit{perturbation index} {\cite{Campbell95}} is defined in terms of perturbations of nonlinear autonomous {\DAE}.  The \textit{differential index} {\cite{Campbell95,Campbell1995}} is the minimum number of differentiation times required to transform a {\DAE} system into an explicit {\ODE} system, and is used in our paper.

In order to solve a {\DAE} accurately, index reduction is an essential and important operation. The accurate numerical solution of a high index ($\geq 2$) {\DAE} is difficult to obtained directly {\cite{Shampine02}}. Thus, we emphasize the need to convert a higher index {\DAE} to a low ($\leq 1$) index {\DAE}. After sufficient differentiation, all the time derivatives of the existing differential variables can be replaced by new variables to realize the index reduction {\cite{Pantelides88}}.
Gear {\cite{Gear88}} proposed a method by repeatedly finding algebraic equations and dealing with them by differential processing until the system becomes an {\ODE}. However, these methods are notoriously hard for large and non-linear systems.
The arguments in {\cite{Pantelides88}} and {\cite{Gear88}} depend on liberal use of the implicit function theorem for
analytic functions under tacitly assumed unstated rank conditions.  The general finite termination of prolongation of analytic systems of partial differential equations yielding in finite steps involutive systems for which an existence and uniqueness theorem can be given, was conjectured by Cartan.
Kuranishi {\cite{Kuranishi57}} eventually proved this famous and difficult result that had eluded Cartan.

For polynomially nonlinear {\DAE} with rational coefficients, there are symbolic differential-elimination algorithms that reduce index of {\DAE}, but these algorithms are often unstable when applied to approximate {\DAE} and also have high worst case complexity.
Fortunately there are some efficient methods based on bipartite graph preprocessing that can sometimes reduce the differential index. These methods have been implemented in {\DAE} simulation packages such as Dymola, Open-Modelica,
MapleSim {\cite{Fritzson14}}, Simulink and Isam{\DAE} {\cite{Caillaud20}}.  Such methods originated with work by
Pantelides {\cite{Pantelides88}} who presented a graph-based preprocessing method that can sometimes by prolongation reduce a {\DAE} to involutive (index 0 or 1) form containing the underlying {\ODE} that decides consistent initial data for
numerical solutions.  Crucially Pantelide's method and its later developments have proven to be successful often enough in applications that they have become a standard part of the software environments mentioned above. Such developments include Mattsson-S\"{o}derlind's (MS) Method {\cite{Mattsson93}} which employs an amending method to introduce new variables to replace dummy derivatives {\cite{McKenzie17}}. Pryce {\cite{Pryce01}} further generalized it to a more direct and widely applicable method by solving an assignment problem. Zolfaghari, Taylor and  Spiteri {\cite{ZOLF2021}} further extended Pryce method to the application of integro-differential–algebraic equations.

\subsubsection{ Improved Structural Methods}

 Despite the success of structural analysis by index reduction, the methods may fail for a {\DAE} if its Jacobian after differentiation is singular, and it is essential to develop improved structure methods.

 Campbell {\cite{Campbell93}} proposed a direct method, which can regularize a {\DAE} in theory by sufficiently differentiating the {\DAE} and simplifying it with an elimination method. But the symbolic elimination process can be very complex and inefficient for nonlinear {\DAE}.

Linear {\DAE} with constant coefficients can be transformed into the canonical form of Weierstra$\beta$, the Kronecker index determined and then the {\DAE} can be solved directly {\cite{Gerdts11}}.  This transformation is neither a strict equivalence transformation \cite{Iwata03} nor a unimodular transformation \cite{Murota95}. Murota {\cite{Murota95}} proposed a general framework ``combinatorial relaxation" algorithm to compute the degree of a certain determinant based on its upper bound, which is defined in terms of the maximum weight of a perfect matching in an associated graph. Iwata {\cite{Iwata03}} improved the combinatorial relaxation algorithm by an equivalence transformation with constant matrices, reducing computational complexity. X. Wu {\cite{Wu13}} applied the modified combinatorial relaxation algorithm to analyze the resulting error behavior. In particular he
gave an algorithm to detect and correct the error from structural index reduction by matrix pencils.

Compared with the method of X. Wu et al. \cite{Wu13}, the LC-method of Tan et al. \cite{Tan17} also considers equations and their derivatives, with better results for some nonlinear {\DAE}s. Unfortunately, although this method may guarantee a global equivalence transformation, it can only be used by its norm space. The ES-method \cite{Tan17} uses new variables to seek the solution in a projection of a higher dimensional space, and it can be considered as a supplement for the LC-method. The LC-method replaces equations, while the ES-method replaces variables.
If the global equivalence transformation of both methods or nether of them is guaranteed, then the LC-method can be used, otherwise, the ES-method can be used. Both of the above two methods can also be extended and applied to some integro-differential–algebraic equations {\cite{ZOLF2021}}.
The substitution method \cite{Taihei19} aims to modify non-linear {\DAE}, and it is a local equivalence method. Like the method of Campbell, it avoids excessive elimination through targeted variable selection. For {\DAE} with high non-linearity, this method is usually too complex to be applied. In order to avoid the complexity of elimination calculations, the augmentation method \cite{Taihei19} adopts the principle similar to the ES-method, and is also a local equivalence method.

\subsection{Problem Statement}
Mathematical models of circular motion in kinematics, mechanical structures and chemical processes etc., often provide polynomially nonlinear {\DAE}s. Consequently they may have more than one solution component (see Example {\ref{ex:3}}). For global information about solutions, we need at least one consistent initial point on
each component.  Such consistent initial points are hard to obtain for a polynomial system with many variables by using symbolic computation, {\eg} by using Groebner Bases \cite{Geddes1992} or Triangular Decomposition \cite{Collins1982}.
Further, Newton iterative solvers usually require a starting point sufficiently close to a solution.
To obtain such global information is one of the two main goals in this paper.

Moreover, the success of structural analysis methods for {\DAE} depends on the regular Jacobian assumption after index reduction.
In many cases, this assumption is valid. However, we will present counterexamples from real applications.

Such cases are called ``\textbf{degeneration}" cases, which means the Jacobian matrix is singular on a whole component, including two types: \textbf{symbolic cancellation} (see Example \ref{ex:2}) and \textbf{numerical degeneration} (see Example \ref{ex:3}).

\begin{remark}
In fact, singularities can occur only at special points along a solution. For example in Equation (\ref{point}), when $u$ is increasing, $u'$ will gradually change to $0$, leading to a singular Jacobian for the equation. We will not consider this kind of problem in this paper.
\end{remark}


\begin{example}\label{ex:2} Symbolic Cancellation:
 Consider the following {\DAE} {\cite{Taihei19}}: \[ \left\{ \begin{array}{rcl} \dot{x}_{1}+\dot{x}_{2}+{x}_{3}&= 0\\ \dot{x}_{1}+\dot{x}_{2} &=0\\ {x}_{2}+\dot{x}_{3}&= 0 \end{array} \right . \]
Symbolic cancellation occurs when the determinant of the Jacobian matrix of the {\DAE} is identically zero.
This case can be regularized by a number of methods: a combinatorial relaxation method, a linear combination (LC) method, and an expression substitution (ES) method {\cite{Tan17}}, a substitution method and an augmentation method {\cite{Taihei19}}.
\end{example}


Unfortunately, there is little research on failure caused by numerical degeneration. This could happen for a parametric {\DAE} model with a non-zero determinant, where parameters take some specific values, and the determinant equals zero after substituting any initial value on a component defined by the constraints.

\begin{figure}[htpb]
  \centering
  \includegraphics[height=5cm,width=8cm]{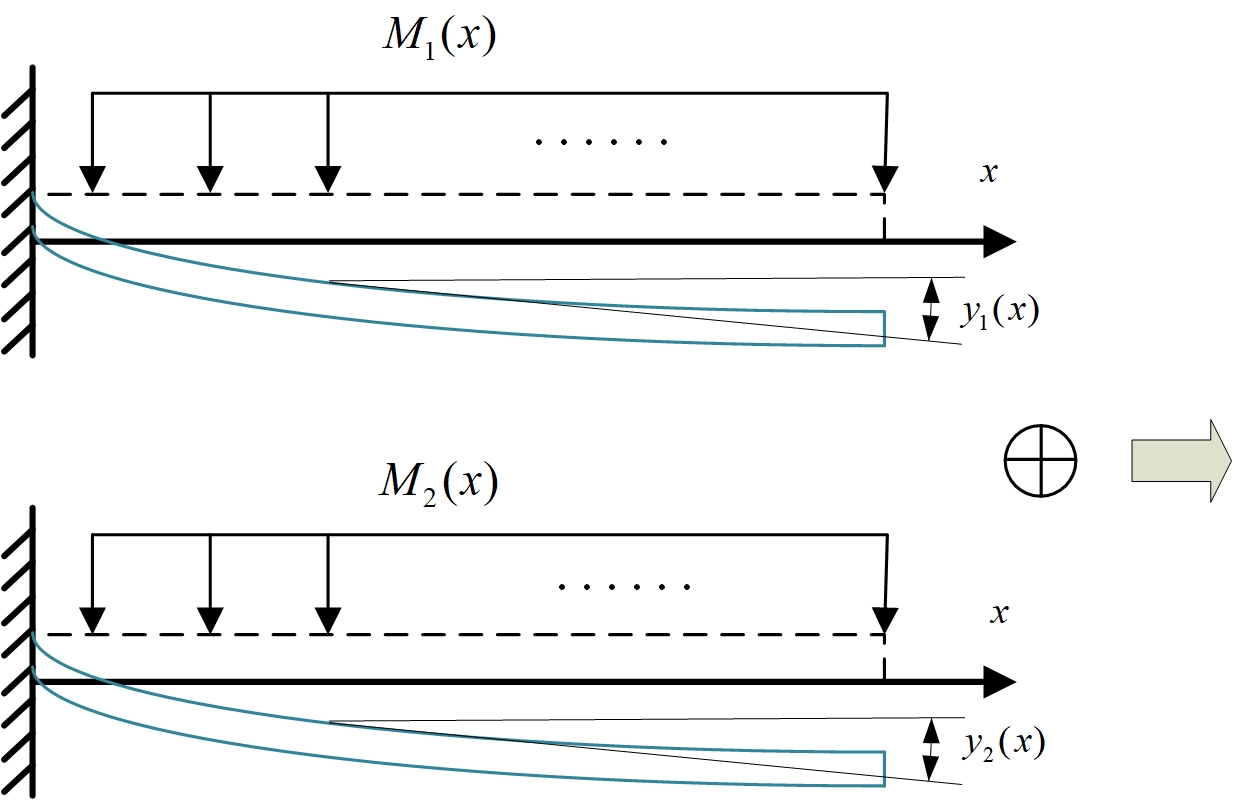}
   \caption{The Superposition Deformation of Beams under Collinear Bending Moments }
  \label{fig:bending}
\end{figure}

\begin{example}\label{ex:3}Numerical Degeneration:

In the bending deformation of a beam described in {\cite{Rans}}, the deformation of any moment acting at a point $x$ satisfies ${\frac {\rm d^{2}}{{\rm d}x^{2}}}y(x)=\frac {M(x)}{E(x)I(x)}$. When two collinear bending moments act on the beam at the same time, the deformation is the superposition of the effects of these moments. Suppose that the elastic deformation energy of the bending moment at each point on the beam is linear in $\lambda$, then the deformation variables $y_1(x)$, $y_2(x)$ satisfy:
\[\left\{\begin{array}{rcc}
{\frac {\rm d^{2}}{{\rm d}x^{2}}}y_{1}(x)+{\frac {\rm d^{2}}{{\rm d}x^{2}}}y_{2}(x)+\frac{1}{5}\cdot(1-\sin(x))+y_{1}(x)&=&0\\
\lambda\cdot y_{1}(x)^{2}-y_{2}(x)^{2}&=&0
\end{array}\right.\]

In this example, the determinant of the Jacobian matrix is $-2(y_{2}+\lambda y_{1})$. When the parameter $\lambda$ is equal to $1$, the constraint becomes $y_{1}^{2}-y_{2}^{2}=(y_{1}+y_{2})(y_{1}-y_{2})=0$. In the view of physics, this means that the elastic deformation energy of each point on the beam is the same. Obviously, two consistent initial points can be selected from two different components, respectively. If the point is on the component $y_1-y_2=0$, then Pryce's structural method works well. But for any initial point on the component $y_1+y_2=0$, we always encounter a singular Jacobian.
Note that, this ``bad" initial value usually can only be obtained approximately, since there is no explicit expression for the roots of general degree $5$ polynomials.
Furthermore, such choices of the parameter values if exist, often satisfy a nonlinear system and are only obtained numerically. Therefore, we call this case \textbf{numerical degeneration}.
\end{example}

Such a degeneration is of potentially great importance in designing control parameters in architecture, aviation and biochemistry.
So a key question deserving further study both theoretically and computationally which is
addressed in our paper is how to identify and solve such degenerated systems.



\vspace{0.3cm}

In summary, we aim to solve the following problems in this paper:
\begin{itemize}
  \item  To detect at least one initial point on each real component of a polynomially  nonlinear {\DAE}.
  \item  To propose a global structural method to restore full rank Jacobian matrices without algebraic elimination.
\end{itemize}

\section{Preliminaries}\label{s:pre}
\sloppy{}

In what follows we will use algorithmic aspects of the formal (Jet) theory of differential equations {\cite{Reid01,Seiler2010}}.  Jet theory enables two inter-related views of differential equations to be algorithmically and correctly manipulated.  One view is in terms of the maps as algebraic (non-differential) equations, and the other view is in terms of local solutions of the differential equations.

Let $\mathbb{I}$ be a nonempty sub-interval of $\R$. Let $ t \in \mathbb{I} \subset \R$
and suppose
$\bm{x},\bm{x}^{(1)},...,\bm{x}^{(\ell)}$ are vectors in $\R^n$, where $\ell$ is a fixed positive integer.
Here we consider maps
$\bm{F}: \mathbb{I}\times \R^{\ell n+n}\rightarrow \R^m$ which are polynomially nonlinear
in $\bm{x},\bm{x}^{(1)},...,\bm{x}^{(\ell)}$ and real analytic in $t$, where possibly $m \not = n$.

An algebraic solution of this system is a point $(t,\bm{x},\bm{x}^{(1)},...,\bm{x}^{(\ell)}) \in \mathbb{I} \times \R^{\ell n+n}$
that satisfies $\bm{F}(t,\bm{x},\bm{x}^{(1)},...,\bm{x}^{(\ell)}) = 0$.
A differential solution $\bm{x}(t)$ is a solution for $t$ in some subinterval of $\mathbb{I}$ that satisfies the
differential equations $\bm{F}(t,\bm{x}(t),\bm{x}^{(1)}(t),...,\bm{x}^{(\ell)}(t)) = 0$ where
now $\bm{x}^{(k)}(t)$ denotes the $k$-order derivative of $\bm{x}(t)$.
Sometimes we will consider the system as an algebraic system and sometimes a differential system,
which should be clear from the context.

In particular, we consider systems which are not solved for their highest derivatives, and regard such a system as a
{\DAE}.
The differential-algebraic systems we consider have Jet form 
\begin{equation}
\bm{F}(t,\bm{x},\bm{x}^{(1)},...,\bm{x}^{(\ell)}) = 0
\end{equation}

Let $\ctotDer$ be the formal total derivative operator with respect to independent variable $t$:
\begin{equation}\label{eq:Dt}
\ctotDer = \frac{\partial}{\partial t} +  \sum_{k=0}^{\infty} \bm{x}^{(k+1)} \frac{\partial }{\partial \bm{x}^{(k)}}
\end{equation}

Regarding $\bm{F}$ in its algebraic (jet) form a single \textbf{prolongation} of $\bm{F}$ is the differentiation of each $F_i$ with respect to $t$, in which $F_i$ is the $i$-th equation of $\bm{F}$, and it is denoted by
\begin{equation}\label{eq:F_expan}
\bm{F}^{(1)} = \ctotDer \bm{F} \cup \ctotDer^0 \bm{F} =  \{\ctotDer F_1,..., \ctotDer F_n \} \cup \bm{F}
\end{equation}

It easily follows that the prolongation of $\bm{F}$ is a linear system with respect to the ``new" dependent variable $\bm{x}^{(\ell+1)}$.
Thus, we can rewrite
\begin{equation}\label{eq:linearStructure}
\ctotDer \bm{F} = \Symbol(t,\bm{x},\bm{x}^{(1)},...,\bm{x}^{(\ell)}) \cdot \bm{x}^{(\ell+1)} + G(t,\bm{x},\bm{x}^{(1)},...,\bm{x}^{(\ell)})
\end{equation}
where $\Symbol$ is an $n \times n$ matrix called the ``\textbf{symbol matrix}" and $\bm{x}^{(\ell+1)}$ is a column vector and $G$ contains all the remaining terms. Note that $\Symbol$ is also the Jacobian matrix of $\bm{F}$ with respect to its highest order derivative $\bm{x}^{(\ell+1)}$.

If we specify the prolongation order for $F_i$ to be $c_i$, then $c_{i}\geq 0$, for $i = 1, \dots, n$. For notational brevity, we will write $ (c_1,...,c_n) = \bm{c} \geq 0$. Then the prolongation of $\bm{F}$  up to the order  $\bm{c}$ is
\begin{equation}\label{eq:DefFc}
\bm{F}^{(\bm{c})}= \{F_1, \ctotDer F_1,..., \ctotDer^{c_1}F_1\}\cup \cdots \cup  \{F_n, \ctotDer F_n,..., \ctotDer^{c_n}F_n\} =\ctotDer^{\bm{c}}\bm{F}
\end{equation}

If $\bm{c}>\bm{0}$, then $\bm{F}^{(\bm{c})}$ also has linear structure similar to (\ref{eq:linearStructure}). The number of equations of $\bm{F}^{(\bm{c})}$ is $n + \sum_{i=1}^{n} c_i$.

\subsection{Structural Prolongation Methods for DAE}

In \cite{Pryce01}, Pryce reinterpreted Pantelides' algorithm \cite{Pantelides88} as an assignment problem that reveals structural information about {\DAE}s. This structural method for square {\DAE}s is a special
case with  roots in the work of Jacobi
\cite{Ollivier09} and yields a local existence and uniqueness result.
The most important feature of Pryce's method is that it finds all the local constraints for
a large class of square {\DAE}s only using prolongation.
A generalization of this construction to partial differential-algebraic equations can be found in \cite{WRI09}.

Suppose that the $k$-th order of derivative of $x_j$ occurs in $F_i$, then the partial derivative $\partial F_i/ \partial x_j^{(k)}$ is not identically zero. The \textit{leading derivative} of an equation or a system $\bm{F}$ with respect to
$x_j$ is denoted by $\LD(F,x_j)$ and is the highest order of derivative such that some $F_i\in \bm{F}$ depends on $x_j^{(k)}$ for some $k\in \mathds{Z^{+}}$.
We define the \textit{leading class} derivatives of a system $\bm{F}$ by
$$ \LCD(\bm{F}):=\{\LD(\bm{F},x_j):  1 \leq j \leq n\} \; $$

Then we obtain an $n \times n$ matrix
$\bm \sigma=(\sigma_{i,j})_{1 \leq i \leq n,1 \leq j \leq n}$ which is called the \textit{signature matrix}
 of $\bm{F}$ by Pryce \cite{Pryce01}:
\begin{equation}\label{signature}
    (\sigma_{i,j})(\bm{F}):= \left\{%
\begin{array}{ll}
    \hbox{ the order of $\LD(F_i,x_j)$;} \\
    -\infty, \;\; \hbox{otherwise.} \\
\end{array}%
\right.
\end{equation}

Suppose that the highest order derivative of $x_j$ appearing in $\bm{F}^{(\bm{c})}$, defined in Equation (\ref{eq:DefFc}), is
$d_j$. From the definition of $\sigma_{i,j}$, clearly $d_j$ is
the largest of $c_i + \sigma_{ij}$ for $i=1,...,n$, which implies that
\begin{equation}\label{dc}
    d_j - c_i \geq \sigma_{ij}, \hbox{~ for all~} i,j.
\end{equation}

Obviously, there are at most $n + \sum d_j$ derivatives
and $n + \sum c_i$ equations in $\bm{F}^{(\bm{c})}$. 
The dimension of $\bm{F}^{(\bm{c})}$ usually is $\sum d_j - \sum c_i$. Roughly
speaking, finding all the constraints is equivalent to minimizing
the dimension of $\bm{F}^{(\bm{c})}$. This can be formulated as an
integer linear programming ({\ILP}) problem in the variables $\bm{c} =
(c_1, ..., c_n )$ and $\bm{d} = (d_1, ..., d_n )$:
\begin{equation}\label{LPP}
   \delta(\bm{F}) \left\|%
\begin{array}{l}
    \hbox{Minimize~~} \delta = \sum d_j - \sum c_i,  \\
    \hbox{~~~~where~~} d_j -c_i \geq \sigma_{ij},  \\
    ~~~~~~~~~~~~~~c_i \geq 0 \\
\end{array}%
\right.
\end{equation}
Let $\delta(\bm{F})$ be the optimal value of the problem (\ref{LPP}).

\begin{remark}\label{rem:dae}

When a {\DAE} has no redundant equations, the optimal value $\delta(\bm{F})$ can be regarded as degree of freedom (DOF)
measure for the {\DAE}, and it also equals the number of variables of $\bm{F}$ minus the number of equations of $\bm{F}$.  In this paper, we usually only consider cases without redundant equations in theoretical derivation.  Some cases with redundant equations will be addressed in Section \ref{ssec:sqare}.  We will also show the computational performance of our approach in our experiments.
\end{remark}

After we obtain the number of prolongation steps $c_i$ for each
equation $F_i$ by applying an {\ILP} solver to Equation (\ref{LPP}), we can construct the partially prolonged system $\bm{F}^{(\bm{c})}$ using $\bm{c}$.
We note that $\bm{F}^{(\bm{c})}$ has a favorable block
triangular structure enabling us to compute consistent initial values
more efficiently.

Without loss of generality, we
assume $c_1 \geq c_2 \geq \cdots \geq c_n$, and let $k_c = c_1$,
which is closely related to the \textit{index} of system $\bm{F}$ (see
\cite{Pryce01}). The $r$-th order derivative of $F_j$
with respect to $t$ is denoted by $F_j^{(r)}$. Then we can
partition $\bm{F}^{(\bm{c})}$ into $k_c + 1$ parts (see Table $1$),
for $0\leq p \in\mathds{Z} \leq k_c$ given by
\begin{equation}\label{eq:B_i}
    \bm{B}_p:= \{ F_j^{(p+c_j - k_c )} : 1\leq j \leq n, p+c_j - k_c
    \geq 0 \} .
\end{equation}

\begin{table}\label{table:triB}
\begin{center}
\begin{tabular}{|c|c|c|c|c|}
  \hline
  $\bm{B}_0$ & $\bm{B}_1$ & $\cdots$ & $\bm{B}_{k_c-1}$ & $\bm{B}_{k_c}$ \\
  \hline
  $F^{(0)}_1$ & $F^{(1)}_1$  & $\cdots$      & $F^{(c_1-1)}_1$  & $F^{(c_1)}_1$  \\
            & $F^{(0)}_2$  & $\cdots$      & $F^{(c_2-1)}_2$  & $F^{(c_2)}_2$ \\
            &            & $\vdots$      & $\vdots$         & $\vdots$  \\
            &            & $F^{(0)}_n$   &  $\cdots$        & $F^{(c_n)}_n$ \\
  \hline
\end{tabular}
\caption {The triangular block structure of $\bm{F}^{(\bm{c})}$ for the case of $c_{p}=c_{p+1}+1$; For $0\leq p < k_c$, $\bm{B}_i$ has fewer jet variables than $\bm{B}_{p+1}$.}
\end{center}
\end{table}

Here, we call $\bm{B}_{k_c}$ the \textbf{top block} of $\bm{F}^{(\bm{c})}$ and $ \bm{F}^{(\bm{c}-1)} = \{\bm{B}_0,...,\bm{B}_{k_c-1}\}$ the \textbf{constraints}.

Similarly, let $k_d = \max(d_j)$ and we can partition all the variables into $k_d+1$ parts:
\begin{equation}\label{eq:U_i}
    \bm{X}^{(q)}:= \{ x_j^{(q+d_j - k_d )} : 1\leq j \leq n, q+ d_j - k_d  \geq 0 \} .
\end{equation}

For each $\bm{B}_i, 0\leq i \leq k_c$, we define the Jacobian Matrix
\begin{equation}\label{Jac}
    \bm{\Jac}_{i} :=
\left( \frac{\partial \bm{B}_{i}}{\partial \bm{X}^{(i+k_d-k_c)}} \right).
\end{equation}

So $\bm{\Jac}_{k_c}$ is the Jacobian Matrix of the top block in the table, and it is a square matrix.

\begin{prop}\label{prop:fullrank}
Let $ \{\bm{\Jac}_i\}$ be the set of Jacobian
matrices of $\{\bm{B}_i \}$. For any $0 \leq i < j \leq k_c$, $\bm{\Jac}_{i}$
is a sub-matrix of $\bm{\Jac}_{j}$. Moreover, if $\bm{\Jac}_{k_c}$ has full
rank, then any $\bm{\Jac}_{i}$ also has full rank.
\end{prop}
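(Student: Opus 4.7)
The plan is to reduce both claims to a single observation: every entry of $\bm{\Jac}_i$ can be expressed in terms of partial derivatives of the original $F_k$'s in a way that is independent of $i$. The key algebraic identity is that, if $F$ depends on $x_m^{(0)},\dots,x_m^{(r)}$, then $\partial(\ctotDer F)/\partial x_m^{(s+1)} = \partial F/\partial x_m^{(s)}$; iterating this $\alpha$ times gives $\partial F_k^{(\alpha)}/\partial x_m^{(\beta)} = \partial F_k/\partial x_m^{(\beta-\alpha)}$ whenever $\beta\geq\alpha$. Using Equations (\ref{eq:B_i})--(\ref{Jac}), the $(k,m)$ entry of $\bm{\Jac}_i$ is $\partial F_k^{(i+c_k-k_c)}/\partial x_m^{(i+d_m-k_c)}$, so $\beta-\alpha = d_m-c_k$, which does not involve $i$.

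First I would split into cases on the sign of $d_m-c_k$ and use the LP constraint $d_m-c_k\geq\sigma_{k,m}$ from (\ref{dc}). If $d_m-c_k\geq 0$, the iterated identity reduces the entry to $\partial F_k/\partial x_m^{(d_m-c_k)}$, which is nonzero only if $d_m-c_k\leq\sigma_{k,m}$; combined with the LP constraint, the entry is $\partial F_k/\partial x_m^{(\sigma_{k,m})}$ when $d_m-c_k=\sigma_{k,m}$ and $0$ otherwise. If $d_m-c_k<0$, the LP constraint forces $\sigma_{k,m}=-\infty$, so $x_m$ does not appear in $F_k$ at all and the entry is $0$. This yields a formula for the $(k,m)$ entry that depends only on $k$ and $m$, not on $i$.

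With this in hand the sub-matrix claim is immediate: for $i<j$, the row index set $\{k:c_k\geq k_c-i\}$ of $\bm{\Jac}_i$ is contained in the row index set $\{k:c_k\geq k_c-j\}$ of $\bm{\Jac}_j$, and likewise for columns via $d_m\geq k_c-i$ vs.\ $d_m\geq k_c-j$; the shared $(k,m)$ entries agree by the formula above. Setting $j=k_c$ embeds every $\bm{\Jac}_i$ as a row/column sub-matrix of the square matrix $\bm{\Jac}_{k_c}$.

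For the full rank assertion, I would exploit a zero block in $\bm{\Jac}_{k_c}$. For $k$ with $c_k\geq k_c-i$ (so $k$ is a row of $\bm{\Jac}_i$) and $m$ with $d_m<k_c-i$ (so $m$ is \emph{not} a column of $\bm{\Jac}_i$), we have $d_m-c_k<0$, and by the case analysis above the $(k,m)$ entry of $\bm{\Jac}_{k_c}$ is zero. Hence, after reordering rows and columns, $\bm{\Jac}_{k_c}$ has the block form $\bigl(\begin{smallmatrix}\bm{\Jac}_i & 0\\ E & B\end{smallmatrix}\bigr)$. If $\bm{\Jac}_{k_c}$ has full rank $n$, the rows indexed by $R_i=\{k:c_k\geq k_c-i\}$ are linearly independent; since they vanish on the columns outside $C_i=\{m:d_m\geq k_c-i\}$, their restrictions to $C_i$ — which form $\bm{\Jac}_i$ — remain linearly independent, giving $\mathrm{rank}\,\bm{\Jac}_i = |R_i|$. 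In particular $|R_i|\leq|C_i|$, so $\bm{\Jac}_i$ has full (row) rank. The main delicate point, and the step I would write out most carefully, is the case analysis establishing the entry formula, especially handling the $d_m-c_k<0$ branch where one must invoke $\sigma_{k,m}=-\infty$ rather than the identity for $\ctotDer$.
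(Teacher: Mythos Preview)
The paper does not prove this proposition itself; it simply cites \cite{WRI09}. Your argument is therefore more self-contained than what the paper offers, and the overall strategy---showing that the $(k,m)$ entry of $\bm{\Jac}_i$ depends only on $(k,m)$, then reading off the nesting of index sets and the block-triangular structure of $\bm{\Jac}_{k_c}$---is correct.

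One step needs tightening. The identity $\partial(\ctotDer F)/\partial x_m^{(s+1)} = \partial F/\partial x_m^{(s)}$ is \emph{not} valid in general; the correct formula is
\[
\frac{\partial(\ctotDer F)}{\partial x_m^{(s+1)}} \;=\; \frac{\partial F}{\partial x_m^{(s)}} \;+\; \ctotDer\!\left(\frac{\partial F}{\partial x_m^{(s+1)}}\right),
\]
and the extra term vanishes precisely when $x_m^{(s+1)}$ does not occur in $F$, i.e., when $s \geq r$. Fortunately your iteration always lands in this regime: at every step you differentiate $F_k^{(\alpha')}$ with respect to $x_m^{(\beta')}$ where $\beta' - \alpha' = d_m - c_k \geq \sigma_{k,m}$ by (\ref{dc}), so $\beta'$ is at or above the top order of $x_m$ in $F_k^{(\alpha')}$ and the correction term drops out. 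Once you state the identity with this hypothesis, the entry formula, the inclusion $R_i\subseteq R_j$, $C_i\subseteq C_j$ for $i<j$, the zero block giving the shape $\bigl(\begin{smallmatrix}\bm{\Jac}_i & 0\\ E & B\end{smallmatrix}\bigr)$, and the linear-independence deduction all go through exactly as you wrote them.
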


See \cite{WRI09} for the proof.

Suppose $(t^*,\bm{X}^*)$ is a point satisfying the  constraints $\{\bm{B}_0,...,\bm{B}_{k_c-1}\}$ and
$\bm{\Jac}_{k_c}$ has full rank at this point. Then Pryce's structural method has successfully finished the index reduction.
However, it fails if $\bm{\Jac}_{k_c}$ is still singular, {\ie} $\bm{\Jac}_{k_c}$ is degenerated.

Obviously, the definition of optimal value $\delta(\bm{F})$ is limited to square systems, and we need to extend the definition for non-square systems  $\bm{F}^{(\bm{c})}$.

\begin{define}\label{define_delta1}
Let a {\DAE} $\bm{F}$  consist of two blocks $\bm{A}$ and $\bm{B}$,  where $\bm{F}$ contains $p$ equations and $n$  dependent variables $p\geq n$, and the signature matrix of $\bm{A}$ be an $n\times n$ square matrix. So $\bm{B}$ contains the remaining $(p-n)$ equations. Let $\delta(\bm{A})$ be the optimal value of the {\ILP} of $\bm{A}$'s signature matrix. We define  $\delta(\bm{F}) = \delta(\bm{A}) - \#eqns(\bm{B})$, where $\#eqns(\bm{B})$ is the number of equations in $\bm{B}$.  Meanwhile, $\delta(\bm{F})$  also equals the $DOF$ \cite{Tan17} of $\bm{F}$, which equals the number of dependent variables minus the number of equations in the prolongation of $\bm{F}$.
\end{define}

In the case of a square signature matrix of a {\DAE} $\bm{F}$, we have $\#eqns=0$, and the extended definition of $\delta(\bm F)$ is equivalent to the original definition.

\begin{prop}\label{prop:extend_delta}
Let $(\bm{c},\bm{d})$ be the optimal solution of Problem (\ref{LPP}) for a given  DAE $\bm{F}$. Then $\delta (\bm{F})=\delta (\bm{F}^{(\bm{c})})=\sum d_j - \sum c_i$.

\end{prop}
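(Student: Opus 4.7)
The plan is to establish the two equalities separately. The first, $\delta(\bm{F}) = \sum d_j - \sum c_i$, is immediate from the definition of $\delta$ as the optimal value of (\ref{LPP}): since $(\bm c, \bm d)$ is optimal, the objective $\sum d_j - \sum c_i$ equals $\delta(\bm F)$ by construction.

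For $\delta(\bm{F}^{(\bm c)}) = \sum d_j - \sum c_i$, I would invoke Definition \ref{define_delta1} with the split $\bm F^{(\bm c)} = \bm A \cup \bm B$, where $\bm A := \bm B_{k_c} = \{\ctotDer^{c_i} F_i : 1 \leq i \leq n\}$ is the top block from (\ref{eq:B_i}) (its signature matrix being $n \times n$) and $\bm B := \{\ctotDer^{k} F_i : 0 \leq k < c_i,\; 1\leq i\leq n\}$ is the complement, with $\#eqns(\bm B) = \sum c_i$. It then suffices to prove $\delta(\bm A) = \sum d_j$, whereupon $\delta(\bm F^{(\bm c)}) = \delta(\bm A) - \#eqns(\bm B) = \sum d_j - \sum c_i$ as required.

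To evaluate $\delta(\bm A)$, first observe that prolongation shifts the signature row-wise: the signature matrix $\bm \sigma'$ of $\bm A$ satisfies $\sigma'_{ij} = \sigma_{ij} + c_i$ whenever $\sigma_{ij}$ is finite, since $x_j^{(k)}$ appears in $F_i$ iff $x_j^{(k + c_i)}$ appears in $\ctotDer^{c_i} F_i$. Hence the ILP for $\bm A$ reads: minimize $\sum D_j - \sum C_i$ subject to $D_j - C_i \geq \sigma_{ij} + c_i$ and $C_i \geq 0$. The choice $C_i = 0,\; D_j = d_j$ is feasible (the original constraint $d_j - c_i \geq \sigma_{ij}$ rewrites as $d_j - 0 \geq \sigma_{ij} + c_i$) and attains objective $\sum d_j$, so $\delta(\bm A) \leq \sum d_j$.

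The reverse inequality is where the main obstacle lies: I would invoke the assignment-problem dual of (\ref{LPP}), which guarantees a permutation $\pi$ of $\{1,\ldots,n\}$ with $d_{\pi(i)} = \sigma_{i,\pi(i)} + c_i$ for every $i$ at the optimum. For any feasible $(C,D)$ of $\bm A$'s ILP, the constraint at $(i, \pi(i))$ forces $D_{\pi(i)} \geq \sigma_{i,\pi(i)} + c_i + C_i = d_{\pi(i)} + C_i$; summing over $i$ and using that $\pi$ is a bijection, $\sum_j D_j - \sum_i C_i \geq \sum_j d_j$, so $\delta(\bm A) \geq \sum d_j$. The existence of such $\pi$ is exactly the non-redundancy assumption recorded in Remark \ref{rem:dae}; without it, a slack equation row could allow some $C_i > 0$ to decrease the objective and the argument would fail. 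Under that standing hypothesis Pryce's LP/assignment duality supplies the required $\pi$ and the chain of equalities closes.
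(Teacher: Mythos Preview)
Your proof is correct and follows the same decomposition as the paper: split $\bm{F}^{(\bm{c})}$ into the top block $\bm{A}=\bm{B}_{k_c}$ and the remaining $\sum c_i$ equations, then establish $\delta(\bm{B}_{k_c})=\sum d_j$ via the feasible pair $(\bm 0,\bm d)$ and conclude by Definition~\ref{define_delta1}. The paper simply asserts that $(\bm 0,\bm d)$ is optimal for $\bm{B}_{k_c}$ without justification, whereas you supply the missing lower bound through the assignment-problem duality (the permutation $\pi$ with $d_{\pi(i)}-c_i=\sigma_{i,\pi(i)}$), which is a welcome addition of rigor.
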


\begin{proof}
For a prolonged {\DAE} system $\bm{F}^{(\bm{c})}=\{\bm{B}_{k_c},\bm{F}^{(\bm{c}-1)}\}$, the signature matrix of the top block $\bm{B}_{k_c}$ is square.

 We construct a pair $(\hat{\bm{c}},\hat{\bm{d})}$, for $i=1,\cdots,n$ and $ j=1,\cdots,n$, $\hat{c}_{i}=0$ and $\hat{d}_{j}= d_{j}$. Since $(\bm{c},\bm{d})$ is the optimal solution for $\bm{F}$, and $\bm{B}_{k_c}$ is the top block of $\bm{F}^{(\bm{c})}$, it follows that $(\hat{\bm{c}},\hat{\bm{d})}$ is the optimal solution of $\bm{B}_{k_c}$, $\delta(\bm{B}_{k_c})= \sum d_j$.

 By Definition \ref{define_delta1}, and using $\#eqns(\bm{F}^{(\bm{c}-1)})=\sum c_i$, we obtain $$\delta (\bm{F}^{(\bm{c})})= \delta(\bm{B}_{k_c}) -\#eqns(\bm{F}^{(\bm{c}-1)})=\sum d_j - \sum c_i =\delta (\bm{F}) \,.$$
\foorp
\end{proof}

\subsection{Framework for Improved Structural Methods}\label{ssec:improved}

Many improved structural methods have been proposed to regularize the Jacobian matrices of {\DAE}s. See {\cite{Gerdts11,Iwata03,Murota95,Campbell93,Wu13}} for methods for linear {\DAE}.
For non-linear {\DAE}s, improved structural methods are based on a combinatorial relaxation framework \cite{Taihei19} with the following steps:
\begin{description}
  \item[Phase $1$.] Compute the solution ($\bm{c}$,$\bm{d}$) of {\ILP} problem $\delta(\bm{F})$. If there is no solution, the {\DAE} do not admit perfect matching, and the algorithm ends with failure.
  \item[Phase $2$.] Determine whether $\bm{\Jac}_{k_c}$ is identically singular or not. If not, the method returns $\bm{F}^{(\bm{c})}$ and halts.
  \item[Phase $3$.] Construct an new {\DAE} $\hat{\bm{F}}$, such that its solution space in $\bm{x}$ dimension is the same as {\DAE} $\bm{F}$ and $0\leq\delta(\hat{\bm{F}})<\delta(\bm{F})$. Then go to Phase $1$.
\end{description}

\begin{remark}
The key part of an improved structural method is to exploit different regularization method for $\hat{\bm{F}}$ in Phase $3$. In this paper, our global structural differentiation method mainly focuses on this phase.
\end{remark}

Phase $2$ above is only to check for symbolic cancellation.
As pointed out in Example \ref{ex:3}, $\det \bm{\Jac}_{k_c}$ may not be identically zero, but $\det \bm{\Jac}_{k_c}=0$ at any consistent initial point of $Z(\bm{F}^{(\bm{c})})$ --- the zero set of $\bm{F}^{(\bm{c})}$. Since $\bm{F}$ is a polynomial system in $\{\bm{x},\bm{x}^{(1)},...,\bm{x}^{(\ell)} \}$,
$\bm{F}^{(\bm{c})}$ can be considered as a polynomial system in the variables $\{\bm{X}^{(0)},...,\bm{X}^{(k_d)}\}$. In the language of algebraic geometry,
it means that $\det \bm{\Jac}_{k_c} \in \sqrt[\R]{\langle \bm{F}^{(\bm{c})} \rangle }$ or equivalently $Z_{\R}(\bm{F}^{(\bm{c})}) \subseteq Z_{\R}(\bm{\Jac}_{k_c})$.

In the rest of the paper, we usually suppress the subscript in $\bm{\Jac}_{k_c}$ so it becomes $\bm{\Jac}$ unless the subscript is needed.

\begin{example}\label{ex:4}
Consider the following {\DAE} with dependent variables $x \left( t \right)$ and $y \left( t \right)$:
\begin{equation}
\bm{F} = \{2\,y {\frac {{\rm d^{2}}x}{{\rm d}t^{2}}}  - x
  {\frac {{\rm d^{2}}y }{{\rm d}t^{2}}} +2x
 \left({\frac {{\rm d}x}{{\rm d}t}} \right)^{2} - {\frac {{\rm d} x }{{\rm d}t}} +\sin \left( t \right) ,y  -
  x^2   \}.
\end{equation}
Applying the structural method yields $\bm{c}=(0,2)$ and $\bm{d}=(2,2)$. Then
$$\bm{F}^{(\bm{c})}= [\{2y x_{tt}-x y_{tt} +2x {x_t}^{2}-x_t+\sin(t), y_{tt}-2x_t^{2}-2xx_{tt}\}, \{-2x x_t+y_t\}, \{-x^2+y\}]$$ and the Jacobian matrix of the top block is
  $\bm{\Jac} =
      \left(
        \begin{array}{cc}
          2y & -x \\
          -2x & 1 \\
        \end{array}
      \right)$.

Although the determinant of the Jacobian $2y-2x^2$ is not identically zero, it must equal zero at any initial point, since
the determinant belongs to the polynomial ideal generated by the constraints, i.e.
$2y-2x^2 \in \langle -x^2+y  \rangle $.
\end{example}

Checking if a polynomial belongs to an ideal can be done by a standard ideal membership test using a Gr\"{o}bner basis of the ideal.
In general, it is challenging to compute the Jacobian determinant and the associated Gr\"{o}bner basis if the system is quite large.
See the text \cite{CoxLittleOshea07} for more details about polynomial ideals, varieties and Gr\"{o}bner bases.
Algorithmic algebraic geometry exploits Gr\"{o}bner bases and related techniques to compute features of solutions
of general polynomial systems with exact (e.g. rational) coefficients.
Numerical versions of these algorithms, where exact numbers are replaced with approximate numbers have largely
been expensive and often unstable.

In this paper, we propose a numerical approach based on real algebraic geometry to detect such degenerated cases
without using determinants or Gr\"{o}bner bases.
It exploits a new generation of algorithms using a fundamentally different and  more thoroughly
numerical approach, centered around the concept of witness points on solution components and is discussed in the next section.

\subsection{Numerical Real Algebraic Geometry}





Numerical algebraic geometry~\cite{SVW05,Hauenstein2017} was pioneered by Sommese, Wampler, Verschelde and others (see \cite{BHSW13,SommeseWampler05} for references and background).  The approach is built on \textit{witness points} which arise by slicing the complex variety with appropriate random planes of complementary dimension. These complex witness points can be efficiently computed by homotopy continuation solvers \cite{Lee2008}, and are theoretically guaranteed to compute at least one such point on each solution component.

For the real case, the methods in \cite{WuReid13,WRF17} yield real witness points as critical points of the distance from a random hyperplane to the real variety.
Alternatively,  the real witness points can be considered as critical points of the distance from a random point to the real variety \cite{Hauenstein12}.

%
%
%
More precisely, to solve a polynomial system $ \bm f = \{f_1,...,f_k\} \subset \R[x_1,...,x_n]$, we first choose a random point $\randpoint \in \R^n$, then there is at least one point on each connected component of $V_{\R}(\bm f)$ with minimal distance to $\randpoint$ satisfying the following problem:

\begin{eqnarray}\label{eq:opt2}
 \min \;  \sum_{i=1}^n (x_i-\randpoint_i)^2/2 \\
 s.t. \hspace{1cm} \bm f(\bm x) = 0   \nonumber.
\end{eqnarray}

This optimization problem can be formulated as a square system by using Lagrange multipliers, i.e.
\begin{equation}
\bm g = \{\bm f,  \sum_{i=1}^k \lambda_i \nabla f_i + x_i - \randpoint_i \} = 0
\end{equation}

When $\bm f$ satisfies the regularity assumptions in \cite{WuReid13}, all the real solutions of $\bm g = \bm 0$ can be obtained by
the homotopy continuation method. These points are called real witness points of $V_{\R}(\bm f)$, where $V_{\R}(\bm f)=\{\bm x\in \mathbb{R}^{n} : \bm f(\bm x)=0\}$.
These real solutions of the constraint equations provide  initial points for every component of a non-linear {\DAE}.

\begin{define}
For a polynomial system $\bm f$, let $\Sing$ be the set of singular points of $V_{\R}(\bm f)$.
If a finite set $W\subseteq \R^n$ contains at least one point on each connected component of $V_{\R}(\bm f) \backslash \Sing$. Then this set is called the \textbf{real witness set} of $V_{\R}(\bm f)$ and these points are called \textbf{real witness points}.
\end{define}

However, if $\bm f$ does not satisfy the regularity assumptions due to high multiplicity or a non-real radical ideal, then we apply a critical point approach \cite{WuChenReid17} based on a penalty factor.

For example if $f = (x^2+y^2-1)^2$, then we choose a random point $\randpoint = (a,b)$ and a large penalty factor $\beta$.
The corresponding regular system is
\begin{equation}\label{eq:cp1}
\left(
  \begin{array}{c}
    x \\
    y \\
  \end{array}
\right) + \beta \cdot f \cdot \left(
                         \begin{array}{c}
                           4x(x^2+y^2-1) \\
                           4y(x^2+y^2-1) \\
                         \end{array}
                       \right) =  \left(
                         \begin{array}{c}
                           a \\
                           b \\
                         \end{array}
                       \right)\\
\end{equation}

If the random point is $(a,b)=(1,1)$ and a large penalty factor $\beta=100000$ is chosen, then $4$ approximate solutions can be calculated by the homotopy continuation method. Here, two of these solutions are far away from components, which can be verified by substituting them into the equation $f$. The verified solutions $[x = 0.985220, y = 0.172402],[x = -0.652031, y = -0.758442]$ are close the real variety which is the unit circle.
For more details, please see \cite{WuChenReid17}.

\section{Detecting Degeneration by Witness Points}\label{sec:det}
\sloppy{}

To build a solid foundation of our theory, we need some results from the theory of real analytic functions of several variables \cite{KrantzParks02}.

\begin{define}
A function $f$, with domain an open subset $U \subset \R^n$ and range
$\R$, is called real analytic on $U$, if for each $\bm p \in U$ the function
$f$ can be represented by a convergent power series in some neighbourhood of $\bm p$.
\end{define}

\begin{prop}[Proposition 2.2.8 of \cite{KrantzParks02}]\label{prop:composition}
If $f_1, ..., f_m$ are real analytic in some neighbourhood of
the point $\bm p\in \R^n$  and $g$ is real analytic in some neighbourhood of the point
$(f_1(\bm p),  ... , f_m(\bm p)) \in \R^m$, then the composition of functions $g(f_1(\bm x), ...,f_m(\bm x))$ is real analytic
in a neighborhood of $\bm p$.
\end{prop}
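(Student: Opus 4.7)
The plan is to prove this composition result by the classical \emph{method of majorants}, which reduces a convergence question about a formal substitution of power series to an elementary geometric-series calculation.

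First, I would make a translation reduction. Set $\bm q = (f_1(\bm p), \ldots, f_m(\bm p))$, replace $\bm x$ by $\bm x - \bm p$ and $\bm y$ by $\bm y - \bm q$, and absorb the constant terms. After this normalization, we may assume $\bm p = \bm 0$ in $\R^n$, $\bm q = \bm 0$ in $\R^m$, and $f_i(\bm 0) = 0$ for each $i$. Each $f_i$ is then given by a power series
\begin{equation}
f_i(\bm x) = \sum_{|\alpha| \geq 1} a^{(i)}_\alpha \bm x^\alpha,
\end{equation}
absolutely convergent on some polydisk $\{\|\bm x\|_\infty < r\}$, and $g$ is given by an absolutely convergent series $g(\bm y) = \sum_{\beta} b_\beta \bm y^\beta$ on some polydisk $\{\|\bm y\|_\infty < s\}$.

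Next, I would carry out the formal substitution of the series for $f_1, \ldots, f_m$ into the series for $g$. Because $f_i(\bm 0) = 0$, each monomial $\bm y^\beta$ becomes a power series in $\bm x$ whose lowest-order term has total degree $\geq |\beta|$, so in each fixed multidegree $\alpha$ only finitely many $\beta$ contribute. Hence the formal composition
\begin{equation}
G(\bm x) \;=\; \sum_\beta b_\beta \, f_1(\bm x)^{\beta_1} \cdots f_m(\bm x)^{\beta_m}
\end{equation}
is a well-defined formal power series in $\bm x$ whose coefficients agree termwise with those of $g \circ (f_1, \ldots, f_m)$.

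The main obstacle — and the technical heart of the argument — is to show $G$ actually converges absolutely near $\bm 0$, which is where the majorant method enters. Since each $f_i$ converges on a polydisk of radius $r$, there exist constants $M_i$ such that $|a^{(i)}_\alpha| \leq M_i / r^{|\alpha|}$, so $f_i$ is majorized by
\begin{equation}
\tilde f_i(\bm x) \;=\; \frac{M_i \,(x_1 + \cdots + x_n)/r}{1 - (x_1 + \cdots + x_n)/r}\,,
\end{equation}
a function analytic and vanishing at $\bm 0$ on a neighborhood $\|\bm x\|_1 < r$. Analogously $g$ is majorized by a function $\tilde g(\bm y) = N/(1 - (y_1 + \cdots + y_m)/s)$. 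Because majorization is preserved under multiplication and addition of nonnegative-coefficient series, substituting the $\tilde f_i$ into $\tilde g$ produces a majorant $\tilde G(\bm x) = \tilde g(\tilde f_1(\bm x), \ldots, \tilde f_m(\bm x))$ for $G$. But $\tilde G$ is a composition of rational functions analytic near $\bm 0$ (since each $\tilde f_i(\bm 0) = 0 < s$), hence has a convergent power series expansion on some polydisk around $\bm 0$. The coefficient-wise domination then forces the series for $G$ to converge absolutely on the same polydisk, proving that $g \circ (f_1, \ldots, f_m)$ is real analytic at $\bm p$.

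The one subtlety to watch is the combinatorial bookkeeping in step two: one must verify that the formal power series manipulations (expanding $f_i^{\beta_i}$ and collecting like monomials in $\bm x$) commute with absolute convergence once the majorant is in hand. This is standard but is where care is needed; everything else is routine geometric-series estimation.
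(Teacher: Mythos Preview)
The paper does not prove this proposition at all: it is quoted verbatim as Proposition~2.2.8 of Krantz--Parks \cite{KrantzParks02} and used without argument. Your majorant-method proof is the standard one (and is essentially the argument given in that reference), and it is correct as outlined; there is nothing to compare against in the paper itself.
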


\begin{theorem}[Real Analytic Implicit Function Theorem \cite{KrantzParks02}]\label{thm:IFT}
Given a set of equations
$f_i(x_1,...,x_m;y_1,...,y_n) =0, \;\; i=1,2,...,n$, where each $f_i$ is real analytic, suppose that
$(\bm p,\bm q)= (p_1,...,p_m; q_1,...,q_n)$  is a solution with nonsingular Jacobian
$\frac{\partial \bm f}{\partial \bm y} (\bm p,\bm q)$.

Then there exists a neighborhood $U \subset \R^m$ of $\bm p$ and a set of real analytic functions $ \phi_j : U \rightarrow  \R, j = 1, 2, ... , n$, such that $\phi_j(\bm p) = q_j, j = 1,2,...,n$, and
$$f_i(\bm x; \phi_1(\bm x),...,\phi_n(\bm x)) =0, i = l,2,...,n, $$
hold for $\bm x \in U$.
\end{theorem}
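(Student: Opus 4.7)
The plan is to combine the classical smooth implicit function theorem with a complexification argument to lift regularity from $C^\infty$ to real analytic. First, since each $f_i$ is real analytic at $(\bm{p},\bm{q})$, it is in particular $C^\infty$ in a neighborhood of that point, and the hypothesis that $\partial \bm{f}/\partial \bm{y}(\bm{p},\bm{q})$ is nonsingular lets the standard $C^\infty$ implicit function theorem produce an open neighborhood $U \subset \R^m$ of $\bm{p}$ together with unique $C^\infty$ functions $\phi_j : U \to \R$ satisfying $\phi_j(\bm{p}) = q_j$ and $f_i(\bm{x};\phi_1(\bm{x}),\ldots,\phi_n(\bm{x})) \equiv 0$ on $U$. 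This takes care of existence and smoothness; only real analyticity remains to be shown.

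To promote regularity, I would complexify. By real analyticity, each $f_i$ is given by a power series converging on some open ball around $(\bm{p},\bm{q}) \in \R^{m+n}$, and that series also converges on a polydisk neighborhood $\tilde{V} \subset \C^{m+n}$ of $(\bm{p},\bm{q})$, defining a holomorphic extension $\tilde{f}_i$. The complex Jacobian $\partial \tilde{\bm{f}}/\partial \bm{y}$ agrees with the real one at $(\bm{p},\bm{q})$ and is therefore invertible, so the holomorphic implicit function theorem yields holomorphic $\Phi_j : \tilde{U} \to \C$ on a polydisk $\tilde{U} \subset \C^m$ centered at $\bm{p}$ such that $\Phi_j(\bm{p}) = q_j$ and $\tilde{f}_i(\bm{z};\Phi_1(\bm{z}),\ldots,\Phi_n(\bm{z})) \equiv 0$. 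Uniqueness of the holomorphic solution, together with the fact that $(\bm{x},\overline{\Phi_j(\overline{\bm{x}})})$ is also a solution when $\bm{x}\in\R^m$, forces the Taylor coefficients of $\Phi_j$ at $\bm{p}$ to be real, so $\phi_j^\ast := \Phi_j|_{\tilde{U}\cap \R^m}$ is real analytic and real-valued. By the uniqueness clause of the $C^\infty$ implicit function theorem, $\phi_j^\ast$ and $\phi_j$ must coincide on $U \cap (\tilde{U}\cap\R^m)$, which shrinks $U$ if necessary to yield a neighborhood on which $\phi_j$ itself is real analytic. Proposition \ref{prop:composition} then justifies writing the composed identity $f_i(\bm{x};\phi_1(\bm{x}),\ldots,\phi_n(\bm{x}))\equiv 0$ in the category of real analytic functions.

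The main obstacle is the holomorphic implicit function theorem, which is not established anywhere in the excerpt and must either be cited as a known result from several complex variables or proved on the side. If one wishes to avoid complexification entirely, an alternative self-contained route is to construct the Taylor series of each $\phi_j$ about $\bm{p}$ directly: invert the Jacobian (Cramer's rule) to solve recursively for the multi-index coefficients from the relations obtained by differentiating $f_i(\bm{x};\phi(\bm{x})) = 0$, then prove the resulting formal series converges on a neighborhood of $\bm{p}$ by the classical method of majorants, dominating the $f_i$ by a geometric-series majorant and tracking the estimates through the recursion. This avoids complex analysis but is considerably more laborious; the complexification route is the cleanest and is the one I would present in detail.
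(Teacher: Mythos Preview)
The paper does not supply a proof of Theorem~\ref{thm:IFT} at all; it is stated with a citation to \cite{KrantzParks02} and used as a black box. So there is no ``paper's own proof'' to compare against.

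That said, your proposal is a correct and standard argument. The complexification route---extend the $f_i$ holomorphically, invoke the holomorphic implicit function theorem, then restrict back to the reals using uniqueness and conjugation symmetry---is exactly the approach taken in Krantz--Parks. Your remark that the holomorphic implicit function theorem is itself an external input is accurate, and your alternative via majorants is the other classical option (also in Krantz--Parks). Either would be acceptable; for the purposes of this paper, simply citing the reference as the authors do is entirely sufficient.
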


\begin{theorem}[Identity Theorem for Real Analytic Functions]\label{thm:IT}
Given two real analytic functions $f$ and $g$ on an open and connected set $U\subset \R^n$, if $f = g$ on a nonempty open subset $S  \subseteq U$, then $f = g$ on the whole set $U$.
\end{theorem}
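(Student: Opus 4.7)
The plan is to reduce the claim to a statement about a single real analytic function by considering $h := f - g$, which is real analytic on $U$ by Proposition \ref{prop:composition}. It suffices to show: if $h$ vanishes on a nonempty open subset $S \subseteq U$, then $h \equiv 0$ on all of $U$. The strategy is a standard connectedness argument via the ``set of analytic zeros of infinite order,'' using the fact that a real analytic function equals its convergent Taylor series in a neighborhood of each point.

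Define
\[
A := \{\, \bm p \in U : D^{\bm \alpha} h(\bm p) = 0 \text{ for every multi-index } \bm\alpha \in \mathbb{Z}_{\geq 0}^n \,\}.
\]
First I would note that $A$ is nonempty: every $\bm p \in S$ lies in $A$, because $h$ vanishes identically on the open set $S$, hence all its partial derivatives of every order vanish throughout $S$. Next I would show $A$ is open: if $\bm p \in A$, then on a neighborhood of $\bm p$ the function $h$ is represented by its Taylor series at $\bm p$, which has all coefficients equal to zero; hence $h \equiv 0$ in that neighborhood, and every point of that neighborhood also has all partial derivatives vanishing, so lies in $A$. Finally I would show $A$ is closed in $U$: each partial derivative $D^{\bm\alpha} h$ is real analytic on $U$ (so in particular continuous), and $A = \bigcap_{\bm\alpha} (D^{\bm\alpha} h)^{-1}(0)$ is an intersection of closed subsets of $U$.

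Since $A$ is nonempty, open, and closed in the connected set $U$, it follows that $A = U$. In particular every point of $U$ has $h$ vanishing on a neighborhood, so $h \equiv 0$ on $U$, giving $f = g$ on $U$.

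The main obstacle, and the only nontrivial step, is the openness of $A$: it rests on the characterizing property of real analyticity that $h$ is locally representable by its Taylor series, so that vanishing of all Taylor coefficients at $\bm p$ forces $h$ to vanish on an entire neighborhood of $\bm p$ (not merely at $\bm p$). The nonemptiness and closedness are essentially bookkeeping once openness is in hand. No appeal to the Implicit Function Theorem (Theorem \ref{thm:IFT}) is required; the argument uses only the local power series representation together with connectedness of $U$.
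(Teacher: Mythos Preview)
Your proposal is correct and follows essentially the same approach as the paper: define the set where all partial derivatives of $f-g$ vanish, show it is nonempty (since it contains $S$), open (by local Taylor-series representation), and closed (as an intersection of zero sets of continuous functions), and conclude by connectedness of $U$. The paper works directly with the set $\{\bm c\in U: \partial^{\bm\mu} f(\bm c)=\partial^{\bm\mu} g(\bm c)\text{ for all }\bm\mu\}$ rather than introducing $h=f-g$, but this is purely cosmetic.
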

\proof \hskip4pt Define a set where $f$ and $g$ have the same power series:
$$ D = \left\{\bm c \in U: \frac{\partial ^{|\bm \mu|}}{\partial \bm x^{\bm \mu}} f(\bm c) =  \frac{\partial ^{|\bm \mu|}}{\partial \bm x^{\bm \mu}} g(\bm c) \mbox{ for all } \bm \mu\right\} $$ where $\bm \mu$ is a multi-index in $(\mathbb{Z}^{+})^n$.
Firstly, since $f = g$ on a nonempty open subset $S$, we have $D \supseteq S$ and consequently $D\neq \emptyset$.
Secondly, $f$ and $g$ are real analytic on $U$, then for any $\bm c\in D$ the power series have a non-zero radius of convergence. It implies that
$D$ is open.
Meanwhile,
$$D = \bigcap_{\mu}  \left\{\bm c \in U:  \frac{\partial ^{|\bm \mu|}}{\partial \bm x^{\mu}} f(\bm c) =  \frac{\partial ^{|\bm \mu|}}{\partial \bm x^{\bm \mu}} g(\bm c) \right\}$$
which is an intersection of closed sets, so it is closed.

Therefore, by the connectedness of $U$, $D$ must be equal to the whole set $U$.  \foorp \\

\begin{lemma}\label{lem:measure0}
Let $C$ be a connected real analytic manifold in $\R^{m+n}$ of dimension $m$ and let $f$ be a real analytic function on $\R^{m+n}$.
Then the intersection $C\cap Z_{\R}(f)$ is equal to $C$ or has measure zero over $C$.
\end{lemma}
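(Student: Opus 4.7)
The plan is to reduce the statement to a local/chart argument via the definition of a real analytic manifold, then invoke the Identity Theorem to obtain the dichotomy, and finally use a standard induction to get measure zero.

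First I would set up local charts. Since $C$ is an $m$-dimensional real analytic manifold embedded in $\R^{m+n}$, around each point $\bm p\in C$ there is a connected open set $U\subset\R^m$ and a real analytic embedding $\phi: U \to C$ with $\phi(U)$ an open neighborhood of $\bm p$ in $C$. By Proposition \ref{prop:composition}, the pullback $g := f\circ\phi$ is real analytic on $U$. Thus the local study of $C\cap Z_\R(f)$ inside $\phi(U)$ reduces to the study of $Z_\R(g)$ inside the Euclidean open set $U$.

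Next I would establish the dichotomy. Define
\[
 S \;=\; \{\,\bm p\in C : f \text{ vanishes identically on some neighborhood of } \bm p \text{ in } C\,\}.
\]
By construction $S$ is open in $C$. To show its complement $T = C\setminus S$ is also open, take $\bm p\in T$, pick a connected chart $\phi:U\to C$ around $\bm p$, and set $g=f\circ\phi$. Since $\bm p\in T$, $g$ is not identically zero on $U$; by Theorem \ref{thm:IT} (the Identity Theorem applied on the connected open set $U$), $g$ then fails to vanish on any nonempty open subset of $U$. Hence no point of $\phi(U)$ lies in $S$, so $\phi(U)\subset T$ and $T$ is open. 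Because $C$ is connected and $C=S\sqcup T$ with both sides open, either $S=C$ (in which case $f\equiv 0$ on $C$, giving $C\cap Z_\R(f)=C$), or $S=\emptyset$.

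In the remaining case $S=\emptyset$, I want to show $C\cap Z_\R(f)$ has measure zero over $C$. Since $C$ is a subset of $\R^{m+n}$ it is second countable, so I can cover $C$ by a countable collection of analytic charts $\phi_k:U_k\to C$ with each $U_k$ a connected open subset of $\R^m$. On each $U_k$ the pullback $g_k = f\circ\phi_k$ is a real analytic function which, by the previous paragraph, is not identically zero. Invoking the classical fact that the zero set of a nontrivial real analytic function on a connected open subset of $\R^m$ has Lebesgue measure zero (proved by induction on $m$: for $m=1$ zeros of a nonzero analytic function are isolated; for the inductive step, apply Fubini along a direction in which the function is not identically zero, using the Identity Theorem to locate such a direction), we get $\phi_k^{-1}(C\cap Z_\R(f))$ of Lebesgue measure zero in $U_k$. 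Taking the countable union of these charts, $C\cap Z_\R(f)$ is of measure zero on $C$.

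The main obstacle in this plan is the last ingredient: packaging the classical measure-zero fact for real analytic zero sets cleanly enough to apply on each chart. The dichotomy via the Identity Theorem and connectedness is essentially automatic once the chart pullback is real analytic, so the genuine content lies in the induction that justifies the measure-zero claim in $\R^m$ and the careful invocation of second countability to make the countable union argument work.
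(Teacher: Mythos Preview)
Your proposal is correct and follows essentially the same route as the paper: localize via analytic charts (the paper does this through the Implicit Function Theorem~\ref{thm:IFT}, you through the definition of an analytic manifold), pull $f$ back to a real analytic $g$ on an open set in $\R^m$ via Proposition~\ref{prop:composition}, and then combine the Identity Theorem~\ref{thm:IT} with connectedness of $C$. The one difference in organization is that the paper argues by contrapositive and simply asserts that if $C\cap Z_\R(f)$ has positive measure then it contains a nonempty open subset of $C$; you instead make this step explicit by first running the open/closed dichotomy on $C$ and then invoking (with the standard induction sketch) the classical fact that a nonzero real analytic function on a connected open set in $\R^m$ has measure-zero zero locus, together with second countability for the countable-chart cover. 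So the two arguments are the same in substance, with your version filling in precisely the step the paper leaves implicit.
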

\proof \hskip4pt
Since $C$ is a smooth manifold described by an analytic system implicitly,
by the Implicit Function Theorem \ref{thm:IFT}, locally $C$ can be parameterized by $m$ free coordinates.
To be rigorous, we need an atlas over $C$ which is a collection of charts depending on the free coordinates.
Without loss of generality, we assume $y_j = \phi_j(x_1,...,x_m),  j=1,...,n$ where $\phi_j$ is real analytic.

Suppose the intersection $C\cap Z_{\R}(f)$ has non-zero measure. Then there is a nonempty open subset $S$ of $C$ where
$g(\bm x) = f(\bm x,\phi_1(\bm x),...,\phi_n(\bm x)) = 0$. By Proposition \ref{prop:composition}, $ g$ is real analytic.
Due to the Identity Theorem \ref{thm:IT}, $g = 0$ on the whole component, and thus $C \subseteq Z_{\R}(f)$.
\foorp \\

The real zero set of an analytic system $\bm f$ is denoted by $Z_{\R}(\bm f)$, whereas the real zero set of a polynomial system $\bm f$ is often denoted by
 $V_{\R}(\bm f)$.

\begin{theorem}\label{thm:vanish}
Let $\bm g$ be a polynomial system and $W$ be a real witness set of $V_{\R}(\bm g)$. If another polynomial system $\bm f(\bm p) = 0$ for any $\bm p\in W$, then $V_{\R}(\bm g) \subseteq V_{\R}(\bm f)$ with probability one.
\end{theorem}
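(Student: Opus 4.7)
The overall plan is to reduce the global containment $V_{\R}(\bm g) \subseteq V_{\R}(\bm f)$ to a component-by-component analysis on the smooth locus $V_{\R}(\bm g) \setminus \Sing$, where Lemma \ref{lem:measure0} is available, and then to extend the conclusion to all of $V_{\R}(\bm g)$ by density and continuity. I would use the standard fact that a real algebraic set has only finitely many connected components, so that the witness set $W$ is finite and meets every component of the smooth locus in at least one point.

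First I would fix an arbitrary connected component $C$ of $V_{\R}(\bm g) \setminus \Sing$, which by construction is a real analytic manifold. By the definition of a real witness set, $W$ contains some $\bm p \in C$, and by hypothesis every entry $f_i$ of $\bm f$ satisfies $f_i(\bm p) = 0$. Since $f_i$ is polynomial, hence real analytic on $\R^n$, Lemma \ref{lem:measure0} gives the dichotomy: either $C \subseteq Z_{\R}(f_i)$, or $C \cap Z_{\R}(f_i)$ has measure zero in $C$. If the first alternative holds for every $i$, then $C \subseteq V_{\R}(\bm f)$, which is what we want for this component.

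Next I would argue that the ``bad'' second alternative occurs only with probability zero over the random data $\randpoint$ used to construct $W$ via the critical-point formulation of Section~2.3. On the smooth locus, the witness point on $C$ arises as a critical point of the squared-distance function $\tfrac12\|\bm x - \randpoint\|^2$ restricted to $C$, so locally the map $\randpoint \mapsto \bm p(\randpoint) \in C$ is the nearest-point projection, a smooth submersion whose fibres are the affine normal subspaces of $C$ of codimension $\dim C$. A Fubini-type argument then shows that the preimage in $\randpoint$-space of any measure-zero subset of $C$ is itself Lebesgue-null. Taking a finite union over the finitely many components $C$ and the finitely many polynomials $f_i$, the exceptional set of $\randpoint$ still has measure zero, so with probability one $\bm f$ vanishes identically on every component of $V_{\R}(\bm g) \setminus \Sing$.

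To conclude, since the singular locus $\Sing$ of the real algebraic set $V_{\R}(\bm g)$ is contained in a proper algebraic subset, the smooth locus $V_{\R}(\bm g) \setminus \Sing$ is dense in $V_{\R}(\bm g)$; continuity of $\bm f$ then forces $\bm f \equiv 0$ on the whole closure $V_{\R}(\bm g)$. The main technical obstacle is the probability-one step: one must justify carefully that the witness-point map pulls null sets back to null sets, which requires controlling the measure-zero exceptional locus where the critical-point construction fails to be a submersion (for instance where $\randpoint$ lies on the focal set of $C$ or is equidistant from two components), and this in turn relies on a local application of the real analytic implicit function theorem to the Lagrange system that defines the witness points.
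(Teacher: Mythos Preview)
Your proposal is correct and follows essentially the same route as the paper: apply Lemma~\ref{lem:measure0} on each connected component of the smooth locus $V_{\R}(\bm g)\setminus\Sing$, argue that a random witness point avoids the measure-zero bad locus with probability one, and then pass to the closure $V_{\R}(\bm g)$ using that $V_{\R}(\bm f)$ is closed. The paper's proof is terser on the probability step---it simply asserts that ``$\bm p$ is a random point on $C$'' and hence misses a measure-zero set with probability one---whereas you correctly identify that the randomness enters through the external point $\randpoint$ and sketch the Fubini-type transfer via the nearest-point map; your version is therefore more careful on exactly the point the paper leaves implicit.
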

\proof \hskip4pt
First $V_{\R}(\bm g) \backslash \Sing$ consists of finitely many smooth connected components and isolated real points.
By Lemma \ref{lem:measure0}, for each smooth connected component $C$, the intersect $C\cap  V_{\R}(\bm f)$ has measure zero over
$C$, unless $C \subseteq V_{\R}(\bm f)$. Since $\bm p$ is a random point on $C$, it belongs to a measure zero set with probability zero. For the isolated points, $\bm f(\bm p) =\bm 0 \Rightarrow \bm p \in V_{\R}(f)$. Therefore, $V_{\R}(\bm g) \backslash \Sing \subseteq V_{\R}(\bm f)$ with probability one.

Since $V_{\R}(\bm f)$ is a closed set, the closure of $V_{\R}(\bm g) \backslash \Sing$, which is $V_{\R}(\bm g)$, must be contained in $V_{\R}(\bm f)$.
\foorp \\

This theorem actually gives a probabilistic method to check if  $\bm f \in \sqrt[\R]{\langle \bm g \rangle }$ without using Gr\"{o}bner bases.

\begin{remark}
After we obtain a witness set $W$ of the constraints, it is unnecessary to compute the determinant of $\bm{\Jac}$ during the detection of degenerated cases by Theorem \ref{thm:vanish}.
We can simply substitute a real witness point into the Jacobian matrix and compute its smallest singular value by numerical methods. If the smallest singular values at all witness points are close to zero, then the Jacobian is degenerated with probability close to one. If some of them are almost zero and the remaining singular values are non-zero, then the determinant vanishes on some components of the constraints, and further work is needed for the {\DAE} on these components.
\end{remark}

\section{Index Reduction by Embedding for Degenerated Systems}\label{sec:implictit method}
\sloppy{}

Consider a smooth connected component $C$ of $Z_{\R}(\bm{F}^{(\bm{c})})$ with a real point $\bm p \in \R^n$.
Suppose $\rank \bm{\Jac}(\bm p) = r < n$. Without loss of generality, we assume that the sub-matrix $\bm{\Jac}(\bm p)[1:r,1:r]$ has full rank.
In this section we will show that the rank is almost a constant over the whole component.

\begin{lemma}\label{lem:whole}
Let $C$ be a smooth connected component. If $\bm{\Jac}[1:r,1:r]$ has full rank at a random point $\bm p$ on $C$. Then it is non-singular over the whole component except some set with measure zero. Moreover, if a minor of $\bm{\Jac}$ at this point is singular, then  it is singular over the whole component with probability one.
\end{lemma}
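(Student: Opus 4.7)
The plan is to apply Lemma \ref{lem:measure0} to the determinant of each square minor of $\bm{\Jac}$, which as a polynomial in the jet variables is real analytic on the ambient space (and hence on the smooth real analytic manifold $C$).

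First I would handle the claim about $\bm{\Jac}[1:r,1:r]$. Let $h(\bm x) := \det \bm{\Jac}[1:r,1:r]$. Since $\bm{F}^{(\bm{c})}$ is polynomial, $h$ is polynomial in the jet variables and therefore real analytic. By hypothesis $h(\bm p) \neq 0$, so $h$ does not vanish identically on $C$. Applying Lemma \ref{lem:measure0} with $f = h$ gives that $C \cap Z_{\R}(h)$ has measure zero on $C$; equivalently, $\bm{\Jac}[1:r,1:r]$ is non-singular on all of $C$ except possibly a measure-zero subset.

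For the second claim, let $m(\bm x)$ be the determinant of any fixed square submatrix of $\bm{\Jac}$, and suppose $m(\bm p) = 0$. Again $m$ is real analytic, so by Lemma \ref{lem:measure0} exactly one of two alternatives holds: either $m \equiv 0$ on $C$, or $C \cap Z_{\R}(m)$ has measure zero on $C$. The key probabilistic observation is that, because $\bm p$ is a random point on $C$ drawn from a distribution absolutely continuous with respect to the natural (Lebesgue / Hausdorff) measure on $C$, the event $\bm p \in C \cap Z_{\R}(m)$ occurs with probability zero in the second alternative. Hence with probability one we must be in the first alternative, namely $m \equiv 0$ on $C$, so the minor is singular throughout the whole component.

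The main technical obstacle, and the only place where a subtle assumption is hidden, is the transition from the set-theoretic dichotomy provided by Lemma \ref{lem:measure0} to a probabilistic statement. This requires that the distribution used to generate the ``random point'' $\bm p$ on $C$ assign zero mass to every measure-zero subset of $C$; this is standard in the real numerical algebraic geometry framework (cf.\ the witness-point constructions in \cite{WuReid13,WuChenReid17}) because the random parameters defining $\bm p$ (e.g.\ the random hyperplane or random base point $\randpoint$ used in the critical-point formulation \eqref{eq:opt2}) are drawn from absolutely continuous distributions. Once this is acknowledged, the conclusion follows immediately by combining the identity-theorem-based Lemma \ref{lem:measure0} with the probabilistic argument above, applied simultaneously (by a union bound over the finitely many minors of $\bm{\Jac}$) to every square submatrix of $\bm{\Jac}$.
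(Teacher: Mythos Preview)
Your proof is correct and follows essentially the same approach as the paper: both apply Lemma~\ref{lem:measure0} to the determinant of the relevant minor, use the resulting dichotomy (either identically zero on $C$ or vanishing only on a measure-zero subset), and invoke randomness of $\bm p$ for the ``moreover'' part. Your version is more explicit about the absolute-continuity assumption underlying the probabilistic step and about handling an arbitrary minor rather than just $\bm{\Jac}[1{:}r,1{:}r]$, but the core argument is identical.
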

\proof \hskip4pt
Let $f$ be the determinant of $\bm{\Jac}[1:r,1:r]$.  If $f(\bm p)=0$, then by Lemma \ref{lem:measure0} we have $C \subseteq Z_{\R}(f)$ with probability one, implying that the Jacobian is singular over the whole component.

If $f(\bm p) \neq 0$, then $C \nsubseteq Z_{\R}(f)$ and  Lemma \ref{lem:measure0} implies that $C\cap Z_{\R}(f)$ has measure zero over $C$.
\foorp \\

Jacobians with constant rank enable us to embed the zero set into a higher dimensional space.

\begin{lemma}[Constant Rank Embedding]\label{lem:proj}
Let
$$\bm f= \{f_1(x,y,z),...,f_r(x,y,z)\} \hspace{0.4cm} \mbox{and} \hspace{0.4cm} \bm g= \{g_1(x,y,z),...,g_{n-r}(x,y,z)\} $$
be two sets of analytic functions, where
$\bm x=(x_1,...,x_r)$, $\bm y=(y_1,...,y_{n-r})$ and $\bm z=(z_1,...,z_m)$. Let $C$ be a smooth connected component in $\R^{m+n}$.
If the Jacobian matrices $\frac{\partial (\bm f,\bm g)}{\partial (\bm x, \bm y)}$ and $\frac{\partial \bm f}{\partial \bm x}$ have constant rank $r$ on $C$.
Then $$Z_{\R}(\bm f,\bm g) \cap C = \pi Z_{\R}(\bm f(\bm x,\bm y,\bm z), \bm f(\bm u,\bm \xi,\bm z),\bm g(\bm u,\bm \xi,\bm z)) \cap C$$  where $\bm u=(u_1,...,u_r)$ and $\bm \xi$ is a constant vector and $\pi$ is the projection from $(\bm x,\bm y,\bm z,\bm u)$-space to $(\bm x,\bm y,\bm z)$-space.
\end{lemma}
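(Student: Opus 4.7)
The plan is to use the constant-rank hypothesis to show that $\bm g$, when restricted to the local graph $\bm x = \bm \phi(\bm y,\bm z)$ cut out by $\bm f = 0$, depends only on $\bm z$. Once this is established, the embedding is transparent: $\bm g(\bm x,\bm y,\bm z)$ and $\bm g(\bm u,\bm \xi,\bm z)$ both evaluate to the same function of $\bm z$ whenever $(\bm x,\bm y,\bm z)$ and $(\bm u,\bm \xi,\bm z)$ lie on the zero set of $\bm f$, so swapping one for the other preserves the zero set on $C$ up to the auxiliary coordinate $\bm u$.

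First I would apply the Real Analytic Implicit Function Theorem (Theorem \ref{thm:IFT}): since $\partial \bm f/\partial \bm x$ is invertible at every point of $C$, the set $\{\bm f = 0\}$ is locally the graph $\bm x = \bm \phi(\bm y,\bm z)$ of an analytic map $\bm \phi$ with $\partial_{\bm y}\bm \phi = -(\partial_{\bm x}\bm f)^{-1}\,\partial_{\bm y}\bm f$. Next I would extract a row-dependence from the equal-rank hypothesis: since $\rank\,\partial(\bm f,\bm g)/\partial(\bm x,\bm y) = r = \rank\,\partial \bm f/\partial \bm x$, the rows of $(\partial_{\bm x}\bm g \mid \partial_{\bm y}\bm g)$ lie in the row span of $(\partial_{\bm x}\bm f \mid \partial_{\bm y}\bm f)$; invertibility of $\partial_{\bm x}\bm f$ then yields a unique analytic multiplier matrix $M = (\partial_{\bm x}\bm g)(\partial_{\bm x}\bm f)^{-1}$ with $\partial_{\bm x}\bm g = M\,\partial_{\bm x}\bm f$ and $\partial_{\bm y}\bm g = M\,\partial_{\bm y}\bm f$. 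Differentiating $\tilde{\bm g}(\bm y,\bm z) := \bm g(\bm \phi(\bm y,\bm z),\bm y,\bm z)$ with respect to $\bm y$ and substituting these relations collapses to $\partial_{\bm y}\tilde{\bm g} \equiv 0$, so $\tilde{\bm g}$ is locally a function $h(\bm z)$ of $\bm z$ alone. The Identity Theorem (Theorem \ref{thm:IT}) together with the connectedness of $C$ then promote $h$ to a single analytic function on the $\bm z$-projection of $C$.

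Both inclusions would follow directly. For $\subseteq$, given $(\bm x_0,\bm y_0,\bm z_0) \in Z_{\R}(\bm f,\bm g)\cap C$, I set $\bm u_0 := \bm \phi(\bm \xi,\bm z_0)$ (well defined for generic $\bm \xi$) and verify $\bm f(\bm u_0,\bm \xi,\bm z_0) = 0$ by construction and $\bm g(\bm u_0,\bm \xi,\bm z_0) = h(\bm z_0) = \tilde{\bm g}(\bm y_0,\bm z_0) = \bm g(\bm x_0,\bm y_0,\bm z_0) = 0$, while $\pi(\bm x_0,\bm y_0,\bm z_0,\bm u_0) = (\bm x_0,\bm y_0,\bm z_0)$ still lies in $C$. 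For $\supseteq$, any point $(\bm x_0,\bm y_0,\bm z_0,\bm u_0)$ in the extended zero set with $(\bm x_0,\bm y_0,\bm z_0)\in C$ satisfies $\bm x_0 = \bm \phi(\bm y_0,\bm z_0)$ and $\bm u_0 = \bm \phi(\bm \xi,\bm z_0)$ by the first two blocks of equations, so $\bm g(\bm x_0,\bm y_0,\bm z_0) = h(\bm z_0) = \bm g(\bm u_0,\bm \xi,\bm z_0) = 0$.

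The main obstacle I anticipate is the passage from local to global: both $\bm \phi$ and the identity $\tilde{\bm g} = h(\bm z)$ are a priori defined only chart-by-chart on $C$, and one must glue the local pieces into a single analytic function $h$ on the $\bm z$-projection. Connectedness of $C$ together with the Identity Theorem handle the gluing for the analytic identity, but one also has to argue that the fixed auxiliary constant $\bm \xi$ lies in the domain of $\bm \phi(\cdot,\bm z_0)$ for every $\bm z_0$ occurring in the projection of $C$; I would address this by choosing $\bm \xi$ generically so that $(\bm u,\bm \xi,\bm z)$ meets the open locus where the rank hypothesis persists, which is nonempty by the constant-rank assumption. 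The projection $\pi$ then merely discards the newly introduced coordinate $\bm u$ after the equivalence has been established.
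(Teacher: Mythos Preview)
Your proposal is correct and follows essentially the same route as the paper: apply the Implicit Function Theorem to write $\bm x=\bm\phi(\bm y,\bm z)$ on $C$, use the equal-rank hypothesis to conclude that $\tilde{\bm g}(\bm y,\bm z):=\bm g(\bm\phi(\bm y,\bm z),\bm y,\bm z)$ is independent of $\bm y$, and then verify both inclusions by setting $\bm u_0=\bm\phi(\bm\xi,\bm z_0)$. The only cosmetic difference is that you exploit the rank condition via a row-span/multiplier argument ($\partial_{\bm y}\bm g=M\,\partial_{\bm y}\bm f$ with $M=(\partial_{\bm x}\bm g)(\partial_{\bm x}\bm f)^{-1}$), whereas the paper phrases it dually as a null-space argument ($(\partial_{\bm y}\bm\phi,I)^t$ annihilated by the full Jacobian); these are the same linear-algebra fact, and your discussion of the local-to-global gluing and the genericity of $\bm\xi$ is in fact more careful than the paper's.
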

\proof \hskip4pt
Since  $\frac{\partial \bm f}{\partial \bm x}$ has constant rank $r$ on $C$, by the Implicit Function Theorem \ref{thm:IFT} and the Identity Theorem \ref{thm:IT}, there exist a set of real analytic functions $\bm \phi = \{\phi_1,...,\phi_r\}$  such that
$\bm f(\bm \phi(\bm y,\bm z),\bm y,\bm z)=\bm 0$ for any $(\bm y,\bm z) \in \pi_{\bm{yz}}(C)$. Thus, $$\frac{\partial \bm f}{\partial \bm x} \frac{\partial \bm \phi}{\partial \bm y} + \frac{\partial \bm f}{\partial \bm y} =\bm 0\,. $$

Since $\frac{\partial (\bm f,\bm g)}{\partial (\bm x, \bm y)}$ also has constant rank $r$, $(\frac{\partial \bm \phi}{\partial \bm y}, I)^t$  is in the
null-space of $\frac{\partial (\bm f,\bm g)}{\partial (\bm x, \bm y)}$. So $$\frac{\partial \bm g}{\partial \bm x} \frac{\partial \bm \phi}{\partial \bm y} + \frac{\partial \bm g}{\partial \bm y} =\bm 0\,. $$

Let $\bm G(\bm y,\bm z) = \bm g(\bm \phi(\bm y,\bm z),\bm y,\bm z)$. We have $\frac{\partial \bm G}{\partial \bm y} = \frac{\partial \bm g}{\partial \bm x} \frac{\partial \bm \phi}{\partial \bm y} + \frac{\partial \bm g}{\partial \bm y} =\bm 0$, which implies that $\bm G(\bm y,\bm z) = \bm G(\bm \xi,\bm z)$ for any constant $\bm \xi$ on $C$.

If $\bm p=(\bm p_{\bm{x}},\bm p_{\bm{y}},\bm p_{\bm{z}}) \in Z_{\R}(\bm f,\bm g) \cap C$, then $\bm p_{\bm{x}}= \bm \phi(\bm p_{\bm{y}},\bm p_{\bm{z}})$. Let $\bm p_{\bm{u}}= \bm \phi(\bm \xi,\bm p_{\bm{z}})$ for some constant vector $\bm \xi$, and let $\hat{\bm p}= (\bm p_{\bm{x}},\bm p_{\bm{y}},\bm p_{\bm{z}},\bm p_{\bm{u}})$. It is straightforward to verify that $\bm f(\bm p_{\bm{u}},\bm \xi,\bm p_{\bm{z}})=\bm 0$ and $\bm g(\bm p_{\bm{u}},\bm \xi,\bm p_{\bm{z}})= \bm G(\bm \xi,\bm p_{\bm{z}}) =  \bm G(\bm p_{\bm{y}},\bm p_{\bm{z}}) = \bm g(\bm p_{\bm{x}},\bm p_{\bm{y}},\bm p_{\bm{z}}) = \bm 0$.  Therefore, $\hat{\bm p} \in Z_{\R}(\bm f(\bm x,\bm y,\bm z), \bm f(\bm u,\bm \xi,\bm z),\bm g(\bm u,\bm \xi,\bm z))$. Thus $\bm p \in \pi Z_{\R}(\bm f(\bm x,\bm y,\bm z), \bm f(\bm u,\bm \xi,\bm z),\bm g(\bm u,\bm \xi,\bm z)) \cap C$.

For any  $\bm p=(\bm p_{\bm{x}},\bm p_{\bm{y}},\bm p_{\bm{z}}) \in \pi Z_{\R}(\bm f(\bm x,\bm y,\bm z), \bm f(\bm u,\bm \xi,\bm z),\bm g(\bm u,\bm \xi,\bm z)) \cap C$, we have $\bm p_{\bm{x}}= \bm \phi(\bm p_{\bm{y}},\bm p_{\bm{z}})$ and ${\bm{u}}= \bm \phi(\bm \xi,\bm p_{\bm{z}})$.
Also $\bm g(\bm u,\bm \xi,\bm p_{\bm{z}})=\bm 0 \Rightarrow \bm 0=\bm G(\bm \xi,\bm p_{\bm{z}})=\bm G(\bm p_{\bm{y}},\bm p_{\bm{z}})= \bm g(\bm p_{\bm{x}},\bm p_{\bm{y}},\bm p_{\bm{z}})$.
So $\bm p\in Z_{\R}(\bm f,\bm g)\cap C$.

\foorp \\

If we have the witness set, then according to Lemma {\ref{lem:whole}}, then the rank of Jacobian matrix
of the {\DAE} on whole component can be calculated by singular value decomposition (SVD) given by Algorithm $3$.

Suppose a prolonged system $Z_{\R}(\bm{F}^{(\bm{c})})$ has constant rank i.e.
\begin{equation}\label{eq:rank}
\rank \bm{\Jac} = r =\rank \bm{\Jac}[1:r,1:r]  < n
\end{equation}
over a smooth component $C$ of $Z_{\R}(\bm{F}^{(\bm{c-1})})$. To simplify our description, we specify the full rank submatrix to be $\bm{\Jac}[1:r,1:r]$, which always can be done by proper permutations of variables and equations given by Algorithm $4$.

\begin{define}\label{define_IRE}
Index Reduction by Embedding (IRE): Suppose $(\bm{c},\bm{d})$ is the optimal solution of Problem (\ref{LPP}) for a given {\DAE} $\bm{F}$, and then prolonged {\DAE} $\bm{F}^{(\bm{c})} = \{  \bm{B}_{k_c},\bm{F}^{(\bm{c}-1)}\}$ has constant rank $\rank \bm{\Jac} = r < n$. Let $\bm{s}= (x_1^{d_1}, ..., x_r^{d_r})$, $\bm{y}= (x_{r+1}^{d_{r+1}},...,x_n^{d_n})$ and $\bm{z}= (t,\bm{X},\bm{X}^{(1)},...,\bm{X}^{(k_d-1)})$, then $\bm{B}_{k_c}=\{ \bm{f(s,y,z)},\bm{g(s,y,z)}\}$, where $\bm{f(s,y,z)}=\{F_1^{(c_1)},..., F_r^{(c_r)}\}$ and  $\bm{g(s,y,z)}=\{F_{r+1}^{(c_{r+1})},..., F_n^{(c_n)}\}$.
We can construct $\bm{G}= \{\bm{F}^{aug} ,\bm{F}^{(\bm{c}-1)}\}$ in which $\bm{F}^{aug}=\{ \bm{f(s,y,z)},\bm{f(u,\xi,z)},\bm{g(u,\xi,z)}\}$.  Then $\bm{F}^{aug}$ is constructed by the following steps:

\begin{enumerate}
  \item  Introduce $n$ new equations $\hat{\bm{F}}=\{\bm{f(u,\xi,z)},\bm{g(u,\xi,z)}\}$:
to replace $\bm{s}$ in the top block $\bm{B}_{k_c}$ by $r$ new dependent variables $\bm{u}=(u_1,...,u_r)$  respectively, and simultaneously replace $\bm{y}$ in the top block $\bm{B}_{k_c}$ by $n-r$ random constants $\bm{\xi}=(\xi_1,...,\xi_{n-r})$ respectively.
  \item Construct a new square subsystem
\begin{equation}\label{eq:newsys}
\bm{F}^{aug} = \{ \bm{f(s,y,z)},\hat{\bm{F}}\},
\end{equation}
\end{enumerate}
\end{define}

where $\bm{F}^{aug}$ has $n+r$ equations with $n+r$ leading variables $\{\bm{X}^{(k_d)}, \bm{u}\}$ and $\bm{X}^{(k_d)}=\{\bm{s},\bm{y}\}$.

Since this reduction step introduces a new variable $\bm{u}$, the corresponding lifting of the consistent initial values must be addressed.
One approach to this problem is to solve the new system $\bm{F}^{aug}$ to obtain lifted consistent initial values.
But this approach is unnecessary and expensive.
According to Definition \ref{define_IRE}, the consistent initial values of the new variables $\bm{u}$ can simply be taken as the initial values of their replaced variables $\bm{s}$. Then $\bm{\xi}$ takes the same initial value as was assigned to $\bm{y}$.



\begin{theorem}\label{thm:result}
Let $(\bm{c},\bm{d})$ be the optimal solution of Problem (\ref{LPP}) for a given {\DAE} $\bm{F}$.
Let $\bm{F}^{(\bm{c})} = \{ \bm{B}_{k_c}, \bm{F}^{(\bm{c}-1)} \}$ as defined in Equation (\ref{eq:B_i}). If $\bm{F}^{(\bm{c})}$ satisfies (\ref{eq:rank}), and $C$ is a smooth connected component in $\R^{\sum d_j+n}$, then
$$Z_{\R}(\bm{F}^{(\bm{c})})\cap C = \pi Z_{\R}(\bm{G}) \cap C$$
where $\bm{G}= \{\bm{F}^{aug},\bm{F}^{(\bm{c}-1)} \}$ as defined in Definition \ref{define_IRE}.   
 Moreover, we have $\delta(\bm{G}) \leq  \delta(\bm{F}) - (n-r)$.
\end{theorem}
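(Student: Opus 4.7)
My plan is to split the proof into two independent parts: the set equality follows immediately from the Constant Rank Embedding Lemma~\ref{lem:proj} applied to the top block, while the DOF inequality reduces to a combinatorial bound on perfect matchings of the signature matrix of $\bm{F}^{aug}$.

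For the set equality $Z_{\R}(\bm{F}^{(\bm{c})})\cap C=\pi Z_{\R}(\bm{G})\cap C$, I first decompose $\bm{F}^{(\bm{c})}=\bm{B}_{k_c}\cup\bm{F}^{(\bm{c}-1)}$ and $\bm{G}=\bm{F}^{aug}\cup\bm{F}^{(\bm{c}-1)}$. The hypothesis~(\ref{eq:rank}) together with Lemma~\ref{lem:whole} guarantees that both $\bm{\Jac}$ and its leading $r\times r$ sub-Jacobian $\partial\bm{f}/\partial\bm{s}=\bm{\Jac}[1{:}r,1{:}r]$ have constant rank $r$ on $C$, which is precisely the hypothesis of Lemma~\ref{lem:proj}. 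Applied to the top block $\bm{B}_{k_c}$, that lemma yields $Z_{\R}(\bm{B}_{k_c})\cap C=\pi Z_{\R}(\bm{F}^{aug})\cap C$. Since the constraint block $\bm{F}^{(\bm{c}-1)}$ does not involve the new variable $\bm{u}$, its zero set is saturated under the fibres of $\pi$, so intersecting with $Z_{\R}(\bm{F}^{(\bm{c}-1)})$ commutes with $\pi$ and gives the identity.

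For the bound $\delta(\bm{G})\le\delta(\bm{F})-(n-r)$, I apply Definition~\ref{define_delta1} with the canonical split $\bm{A}=\bm{F}^{aug}$ and $\bm{B}=\bm{F}^{(\bm{c}-1)}$. Since $\bm{F}^{aug}$ has $n+r$ equations in the $n+r$ dependent variables $\{\bm{X},\bm{u}\}$ its signature matrix is square, while $\bm{F}^{(\bm{c}-1)}$ contributes $\sum c_i$ extra equations, so $\delta(\bm{G})=\delta(\bm{F}^{aug})-\sum c_i$. Combined with Proposition~\ref{prop:extend_delta}'s identity $\delta(\bm{F})=\sum d_j-\sum c_i$, the claim reduces to showing $\delta(\bm{F}^{aug})\le\sum d_j-(n-r)$. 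To prove this I analyze perfect matchings of the $(n+r)\times(n+r)$ signature matrix of $\bm{F}^{aug}$, partitioning its rows into $r$ \emph{A-rows} from $\bm{f}(\bm{s},\bm{y},\bm{z})$ and $n$ \emph{BC-rows} from $\hat{\bm{F}}=\{\bm{f}(\bm{u},\bm{\xi},\bm{z}),\bm{g}(\bm{u},\bm{\xi},\bm{z})\}$, and its columns into $n$ $\bm{x}$-columns and $r$ $\bm{u}$-columns. Four entry bounds drive the argument: A-rows contain $-\infty$ in every $\bm{u}$-column and at most $d_j$ in the $x_j$-column; BC-rows contain at most $0$ in each $\bm{u}$-column and at most $d_j-1$ in the $x_j$-column, the latter because the substitution $\bm{s}\mapsto\bm{u}$, $\bm{y}\mapsto\bm{\xi}$ eliminates every occurrence of the leading derivative $x_j^{(d_j)}$. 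Consequently any finite-weight perfect matching must assign all $r$ A-rows to $x$-columns, the $r$ $\bm{u}$-columns to $r$ BC-rows, and the remaining $n-r$ $x$-columns to the remaining $n-r$ BC-rows, giving total weight at most $\sum_{j\in J_A}d_j+\sum_{j\in J_{BC}}(d_j-1)+0=\sum d_j-(n-r)$.

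The main obstacle I anticipate is making the drop $d_j\to d_j-1$ for BC-rows fully rigorous for every $x_j$-column: this rests on the fact, implicit in Definition~\ref{define_IRE}, that $\bm{s}\cup\bm{y}=\bm{X}^{(k_d)}=\{x_1^{(d_1)},\dots,x_n^{(d_n)}\}$ exhausts all $n$ highest-order leading derivatives, so that every $x_j^{(d_j)}$ appearing in any $F_i^{(c_i)}$ is replaced by an element of $\bm{u}$ or $\bm{\xi}$. A secondary technical point is verifying that the signature matrix of $\bm{F}^{aug}$ admits at least one finite perfect matching, so that $\delta(\bm{F}^{aug})$ is finite; this follows by combining the full-rank hypothesis on $\partial\bm{f}/\partial\bm{s}$, which ensures each $u_k$ appears in some B-row with entry $0$, with the feasibility of the original ILP for $\bm{F}$.
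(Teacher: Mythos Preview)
Your proposal is correct and, for the set equality, follows the paper's argument exactly (apply Lemma~\ref{lem:proj} to the top block, then intersect with the shared constraint block $\bm{F}^{(\bm{c}-1)}$).

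For the DOF bound the two arguments are dual to one another. The paper does not bound the maximum-weight matching; instead it exhibits an explicit feasible point of the minimisation ILP~(\ref{LPP}) for $\bm{F}^{aug}$, namely $\bar c_i=0$ on the $r$ A-rows, $\bar c_i=1$ on the $n$ BC-rows, $\bar d_j=d_j$ on the $x$-columns and $\bar d_j=1$ on the $u$-columns, and checks $\bar d_j-\bar c_i\ge\sigma_{ij}$ block by block using precisely the four entry bounds you list. That immediately gives $\delta(\bm{F}^{aug})\le\sum\bar d_j-\sum\bar c_i=\sum d_j-(n-r)$ straight from the definition of $\delta$ as a minimum. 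Your route bounds every perfect matching of the signature matrix by $\sum d_j-(n-r)$ and then identifies $\delta(\bm{F}^{aug})$ with the maximum matching weight. Both arguments rest on the same four $\sigma_{ij}$-bounds; the paper's version is marginally more self-contained because it needs nothing beyond ``a feasible point upper-bounds a minimum'', whereas yours tacitly uses the strong-duality identity between Problem~(\ref{LPP}) and the assignment problem (standard in Pryce's framework, but worth stating). On the other hand, your matching analysis makes the combinatorial structure of $\bm{F}^{aug}$ more explicit and dispenses with the need to guess the dual certificate $(\bar{\bm c},\bar{\bm d})$. The ``obstacle'' you flag (that $\bm s\cup\bm y$ exhausts all $n$ leading derivatives $x_j^{(d_j)}$) is exactly the point the paper uses as well, and your justification via Definition~\ref{define_IRE} is the correct one.
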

\proof \hskip4pt
By the Constant Rank Embedding Lemma \ref{lem:proj} and Definition \ref{define_IRE}, since the random constants involved can be arbitrarily ascribed, we easily get $Z_{\R}(\bm{B}_{k_c}) \cap C =  \pi Z_{\R}(\bm{F}^{aug}) \cap C$.  Further, since
$\bm{F}^{(\bm{c}-1)}$ is common to
both ${\bm{F}^{(\bm{c})}}$ and ${\bm{G}}$ we have $Z_{\R}(\bm{F}^{(\bm{c})})\cap C = \pi Z_{\R}(\bm{G}) \cap C$.

\begin{table}
\caption{signature matrix of $\bm{F}^{aug}$ and a feasible solution of $(\bar{\bm{c}},\bar{\bm{d}})$}\label{table:tritop}
\renewcommand\arraystretch{1.2}
\begin{center}

\begin{tabular}{cccc}

 & &\multicolumn{1}{c}{ $\bar{\bm{d}}(1,...,n)=\bm{d}\quad $}  &   \multicolumn{1}{c}{ $\quad \bar{\bm{d}}(n+1,...,n+r)=\bm{1}$ } \cr\cline{2-4}
  &\multicolumn{1}{|c|}{$\sigma_{i,j}(\bm{F}^{aug})$}& \multicolumn{1}{|c|}{$\bm{x}$ }& \multicolumn{1}{|c|}{$\bm{u}$} \cr\cline{2-4}
   \multicolumn{1}{l|}{$\bar{\bm{c}}(1,...,r)= \bm{0}$} &\multicolumn{1}{|c|}{$\bm{f(s,y,z)}$} & \multicolumn{1}{|c|}{$\sigma_{i,j}\leq d_{j}$} & \multicolumn{1}{|c|}{$\sigma_{i,j}=-\infty$} \cr\cline{2-4}
   \multicolumn{1}{l|}{$\bar{\bm{c}}(r+1,...,n+r)= \bm{1}$ } & \multicolumn{1}{|c|}{$\hat{\bm{F}}(\bm{z},\bm{\xi},\bm{u})\}$} &  \multicolumn{1}{|c|}{$\sigma_{i,j}\leq d_{j}-1$}     &  \multicolumn{1}{|c|}{$\sigma_{i,j}=0$ or $-\infty$} \cr\cline{2-4}
\end{tabular}
\end{center}
\end{table}

According to Table \ref{table:tritop}, we construct a pair  $(\bar{\bm{c}},\bar{\bm{d})}$:
\begin{equation}\label{opt_soln}
 \bar{c}_{i}= \left\{%
\begin{array}{ll}
    0,& i=1,\cdots,r \\
    1,& i=(r+1),\cdots,(n+r) \\
\end{array}%
\right., \bar{d}_{j}= \left\{%
\begin{array}{ll}
    d_{j},& j=1,\cdots,n \\
    1,& j=(n+1),\cdots,(n+r) \\
\end{array}%
\right.
\end{equation}

For $1\leq i \leq r$ and $ 1\leq j \leq n$, the signature matrix of $\bm{F}^{aug}$ is the same as $\bm{B}_{k_c}[1:r,1:n]$, implying that $\sigma_{i,j}(\bm{f})\leq d_{j}-0=\bar{d}_{j}-\bar{c}_{i}$.

For $1\leq i \leq r$ and  $ (n+1)\leq j \leq (n+r)$, $\sigma_{i,j}(\bm{f})=-\infty<1-0=\bar{d}_{j}-\bar{c}_{i}$.

 For $(r+1)\leq i \leq (n+r)$ and $ 1\leq j \leq n$, since $\bm{s}$ and $\bm{y}$ in $\hat{\bm{F}}$ have been replaced with dummy variables and constants, we have:  \\
 $\sigma_{i,j}(\hat{\bm{F}})\leq \sigma_{i,j}(\bm{B}_{k_c})-1 \leq d_{j}-1= \bar{d}_{j}-\bar{c}_{i}$

  For $(r+1)\leq i \leq (n+r)$ and $(n+1)\leq j \leq (n+r)$,  $\sigma_{i,j}(\hat{\bm{F}})\leq 0 = \bar{d}_{j}-\bar{c}_{i}$.

 To sum up, $(\bar{\bm{c}},\bar{\bm{d})}$ is a pair of feasible solutions of the {\ILP} (\ref{LPP}) for $\bm{F}^{aug}$. Thus, $\delta(\bm{F}^{aug})\leq \sum\limits^{n+r}_{j=1}\bar{d}_{j}-\sum\limits^{n+r}_{j=1} \bar{c}_{i} =\sum\limits^{n}_{j=1}{d}_{j}-(n-r) = \delta(\bm{B}_{k_c}) -(n-r) $.



Obviously, since both ${\bm{F}^{(\bm{c})}}$ and ${\bm{G}}$ have the same
block of constraints $\bm{F}^{(\bm{c}-1)}$, according to Definition {\ref{define_delta1}}, it follows that $\delta({\bm{G}})-\delta({\bm{F}^{(\bm{c})}})=\delta(\bm{F}^{aug})-\delta(\bm{B}_{k_c})\leq -(n-r)$. Finally,  $\delta(\bm{G}) \leq \delta(\bm{F}^{(\bm{c})}) - (n-r)= \delta(\bm{F}) - (n-r)$, since $\delta(\bm{F})=\delta(\bm{F}^{(\bm{c})})$ by Proposition \ref{prop:extend_delta}. \foorp \\


Since $\rank \bm{\Jac} = r$, restoring regularity is equivalent is some sense to finding $n-r$ hidden constraints by elimination.

\begin{remark}
Actually, most of the results in the paper can be generalized to real analytic functions. We only consider polynomially nonlinear {\DAE}s in this paper, because the homotopy continuation methods can provide all solutions of a square polynomial system and we lack of such a global solver for analytic systems.
\end{remark}

Although there are more dependent variables in $\bm{G}$, the computational cost is much lower than explicit symbolic elimination, since $\bm{G}$ and the corresponding lifted witness points can be easily constructed.
Moreover, in the IRE method, the feasible solution $(\bar{\bm{c}},\bar{\bm{d})}$  given in Equation (\ref{opt_soln}) without {\ILP} solving is an optimal solution in all examples in Section \ref{sec:ex}. Theoretically, Lemma {\ref{lem:lifting}} below
shows that the feasible solution $(\bar{\bm{c}},\bar{\bm{d})}$ is optimal under some reasonable assumptions.

\begin{lemma}\label{lem:lifting}
Suppose each equation $F_{i}$ in the top block $\bm{B}_{k_c}$ of a {\DAE} $\bm F$
contains at least one variable  $x_{j}\in \bm{X}^{(k_d)-1}$.  If $\bm F$ is also a perfect match, then $(\bar{\bm{c}},\bar{\bm{d})}$ in Equation (\ref{opt_soln}) is an optimal solution and $\delta(\bm{G}) =  \delta(\bm{F}) - (n-r)$.
\end{lemma}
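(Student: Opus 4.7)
The goal is to show that $(\bar{\bm{c}},\bar{\bm{d}})$ attains the ILP optimum $\delta(\bm F^{aug})$, from which $\delta(\bm G) = \delta(\bm F) - (n-r)$ will follow by a routine calculation with Definition~\ref{define_delta1} and Proposition~\ref{prop:extend_delta}. My approach is to invoke the LP duality underlying Pryce's method: the ILP~(\ref{LPP}) on the signature matrix $\sigma^{aug}$ has the same optimal value as the maximum-weight perfect matching in $\sigma^{aug}$, so proving optimality of $(\bar{\bm{c}},\bar{\bm{d}})$ reduces to exhibiting a perfect matching whose total weight equals $\sum_{j=1}^{n+r}\bar d_j-\sum_{i=1}^{n+r}\bar c_i = \sum_{j=1}^{n}d_j-(n-r)$.

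First, the proof of Theorem~\ref{thm:result} already shows that $(\bar{\bm{c}},\bar{\bm{d}})$ is ILP-feasible for $\bm F^{aug}$, giving the weak inequality $\delta(\bm F^{aug})\le \sum_{j=1}^{n}d_j-(n-r)$. Next, I construct the matching in three tranches corresponding to the three equation blocks of $\bm F^{aug}$. Because $\bm{\Jac}[1{:}r,1{:}r]$ has full rank $r$, its determinant is nonzero, so the Leibniz expansion yields a permutation $\tau:[1,r]\to[1,r]$ with $\partial F_i^{(c_i)}/\partial x_{\tau(i)}^{(d_{\tau(i)})}\neq 0$; pair $F_i^{(c_i)}$ with $x_{\tau(i)}$ (weight $d_{\tau(i)}$) for $i\in[1,r]$, and pair $\hat F_i$ with $u_{\tau(i)}$ (weight $0$) for the same $i$, which is legal since the substitution sent the nonzero leading entry to $u_{\tau(i)}$. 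For the remaining equations $\hat F_i$ with $i\in[r+1,n]$, the hypothesis supplies at least one variable $x_{h(i)}^{(d_{h(i)}-1)}$ appearing in $F_i^{(c_i)}$; matching $\hat F_i$ to $x_{h(i)}$ contributes weight $d_{h(i)}-1$. Summing the three contributions gives $\sum_{j=1}^{r}d_j+0+\sum_{j=r+1}^{n}(d_j-1)=\sum_{j=1}^{n}d_j-(n-r)$, matching the feasible primal value and therefore forcing optimality.

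With $\delta(\bm F^{aug}) = \sum_{j=1}^{n}d_j-(n-r)$ established, the final identity follows: by Definition~\ref{define_delta1} and the fact that $\bm G$ contains $\sum c_i$ constraint equations $\bm F^{(\bm c-1)}$ beyond $\bm F^{aug}$, we obtain
\[
\delta(\bm G)=\delta(\bm F^{aug})-\#eqns(\bm F^{(\bm c-1)})
=\sum_{j=1}^{n}d_j-(n-r)-\sum_{i=1}^{n}c_i
=\delta(\bm F)-(n-r),
\]
where the last step uses Proposition~\ref{prop:extend_delta}.

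\textbf{Main obstacle.} The delicate step is 2(c): the hypothesis only guarantees \emph{some} $h(i)\in[1,n]$ for each $i\in[r+1,n]$, and these partial choices must be assembled into a system-of-distinct-representatives landing in $[r+1,n]$ so that the $x$-vertices used here are disjoint from those used by $\tau$ and the $u$-vertices used in 2(b). My intended remedy is a Hall-type argument: combine the hypothesis (one $(d_j{-}1)$-edge per $\hat F_i$) with the perfect matching $\pi$ of $\bm F$ (which supplies, for any $\hat F_i$ with $\pi(i)\le r$, an edge to $u_{\pi(i)}$ we can swap into the scheme) to verify Hall's condition on the bipartite graph of tight edges restricted to the remaining $n-r+r$ right vertices. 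Any failure of Hall's condition would contradict either the perfect-matching assumption on $\bm F$ or the full-rank choice of $\bm{\Jac}[1{:}r,1{:}r]$, which is why both hypotheses appear in the lemma.
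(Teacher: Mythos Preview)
Your route is genuinely different from the paper's. The paper argues optimality of $(\bar{\bm c},\bar{\bm d})$ by contradiction on the ILP side: assuming a strictly smaller feasible pair exists, it breaks into four cases (lowering some $\bar d_{n+k}$ to $0$; raising some $\bar c_{r+i}$ above $1$; mixed lowerings; mixed raisings) and derives a contradiction in each, using the hypothesis on $\bm X^{(k_d-1)}$ only to cap the $c_i$'s of the $\hat{\bm F}$-rows at $1$ and the perfect-match hypothesis only to rule out case~(3). In contrast, you invoke LP duality and try to exhibit a tight perfect matching directly. Your approach is more structural and explains transparently why the two hypotheses enter, whereas the paper's case split is shorter but less illuminating.

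That said, the obstacle you flag in step~2(c) is a genuine gap, not a formality, and your sketch does not yet close it. What you need is a perfect matching in the bipartite graph of \emph{tight} edges relative to $(\bar{\bm c},\bar{\bm d})$: rows $F_1^{(c_1)},\ldots,F_r^{(c_r)},\hat F_1,\ldots,\hat F_n$ versus columns $x_1,\ldots,x_n,u_1,\ldots,u_r$, with $F_i^{(c_i)}$ tight to $x_j$ iff $x_j^{(d_j)}$ occurs, and $\hat F_i$ tight to $x_j$ iff $x_j^{(d_j-1)}$ occurs as top order, and to $u_k$ iff $x_k^{(d_k)}$ occurred in $F_i^{(c_i)}$. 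Your ingredients ($\tau$ from the full-rank block, $h(i)$ from the hypothesis, and the system matching $\pi$) give edges, but simply ``swapping'' one at a time does not obviously terminate: for instance, if every $h(i)$ with $i>r$ lands in $[1,r]$ and the corresponding $F$-rows have no alternative tight column outside $[1,r]$, your augmenting-path idea must reroute through $u$-columns, and you have not shown the required alternating path always exists. To finish, you should verify Hall's condition on this tight graph explicitly (using ILP optimality of $(\bm c,\bm d)$ for $\bm F$, which guarantees each $F_i^{(c_i)}$ has at least one tight $x$-column, in addition to your two hypotheses), or else abandon the matching construction and argue, as the paper does, directly on the dual side that no feasible $(\bm c',\bm d')$ undercuts $(\bar{\bm c},\bar{\bm d})$.
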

\proof \hskip4pt
According to the Table \ref{table:tritop}, since $\bm{f(s,y,z)}$ is a part of {\DAE} $\bm{F}$, its corresponding $(\bar{\bm{c}}[1:r],\bar{\bm{d}}[1:n])$ is optimal. If $(\bar{\bm{c}},\bar{\bm{d}})$ is not an optimal solution, then there must be a feasible solution $({\bm{c}},{\bm{d})}$ satisfies one of the following four cases, such that $(\sum \bm{d}- \sum {\bm{c}})\leq \sum (\bar{\bm{d}}- \sum \bar{\bm{c}})$. The Lemma is now proved by contradiction.

(1) $\bm{c}=\bar{\bm{c}}$ and at least one element in ${\bm{d}}[(n + 1):(n + r)]$ is $0$. It is easy to prove it does not satisfy $d_j-c_i\geq \sigma_{i,j}$.

(2) $\bm{d}=\bar{\bm{d}}$ and at least one element in ${\bm{c}}[(r + 1):(n + r)]$ is more than $1$. It also does not satisfy $d_j-c_i\geq \sigma_{i,j}$.

(3) Some elements in ${\bm{c}}[(r + 1):(n + r)]$ are zeros and more elements in ${\bm{d}}[(n + 1):(n + r)]$ are also zeros. That implies, at least $2$ of the highest derivatives of variables in $\bm{u}$ only occur in $1$ equation.  This contradicts the perfect match condition.

(4) Some elements in ${\bm{c}}[(r + 1):(n + r)]$ are $>1$, and some elements in ${\bm{d}}[(n + 1):(n + r)]$ are also $>1$ , such that $\sum {\bm{d}}[(n + 1):(n + r)]-\sum {\bm{c}}[(r + 1):(n + r)]< (r-n)$. Since at least one of $\bm{X}^{(k_d)-1}$ occurs in every equation of $\hat{\bm{F}}(\bm{z},\bm{\xi},\bm{u})\}$, all elements in ${\bm{c}}[(r + 1):(n + r)]$ must be $\leq 1$, contrary to our assumption.

Since $(\bar{\bm{c}},\bar{\bm{d})}$ is an optimal solution of {\ILP} (\ref{LPP}) for $\bm{F}^{aug}$ it follows that,
$\delta(\bm{F}^{aug}) =  \delta(\bm{B}_{k_c}) -(n-r) \Rightarrow \delta(\bm{G}) =  \delta(\bm{F}) - (n-r)$.

\foorp \\

\section{Algorithms}\label{sec:alg}
\sloppy{}

This section provides a global structural differentiation method (Algorithm $7$) for solving a polynomially non-linear
{\DAE}, based on the IRE method (Algorithm $6$) --- the key algorithm to restore the regularity and to reduce index. Also, we need to recall some existing subroutines given in Algorithms $1-5$.

Algorithm $1$ is used to find an optimal solution $(\bm{c},\bm{d})$ of {\ILP} (\ref{LPP}) of a {\DAE} ${\bm F}$ with variables $\bm{x}$, which helps to prolong {\DAE} in a special pattern to reduce its differential index.

Algorithm $2$ is used to find a real witness set $\bm{W}=\{\bm{p}_{i}|i=1,...,m\}$ by the homotopy continuation method. Here, the input $\bm{f}$ is considered as a polynomial system by taking all derivatives of $\bm x$ as new variables. For constraints of a {\DAE}, the obtained real witness points can be considered as candidate initial points. Crucially, this algorithm can find all constraint components of a {\DAE}.

Algorithm $3$  is the Singular Value Decomposition (SVD). The purpose of this algorithm is to find the numerical rank of Jacobian matrix $\bm{\Jac}$ at a real witness point $\bm{p}$ with absolute tolerance $AbsTol$.

Algorithm $4$ is a sorting method to find a sub-matrix with constant rank
by swapping the equations of the top block $\bm{B}_{k_{c}}$ and the highest derivative variables $\bm{X}^{k_d}$.
The output is a new $\bm{B}_{k_{c}}$ whose Jacobian matrix at a given real witness point $\bm{p}$ has a full rank sub-matrix $\bm{\Jac}(\bm{p})[1:r,1:r]$, where $r$ is determined by Algorithm $3$. Firstly, calculate permutation vectors of rows and columns for $\bm{\Jac}(\bm{p})$ respectively by householder QR (HQR). Then, swap equations and variables according to permutation vectors respectively. Before returning the sorted matrix, we will verify the rank of $\bm{\Jac}(\bm{p})[1:r,1:r]$ by SVD.

Algorithm $5$ is a low index {\DAE} solver implemented by one-step projection and one-step prediction. Obviously a low index {\DAE} $\bm{F}^{(\bm{c})}$ can be divided into two parts --- constraints $\bm{F}^{(\bm{c-1})}$ and a square {\ODE} $\bm{B}_{k_{c}}$. Firstly, since a initial value may be not a consistent initial value of the {\ODE}, the initial value point needs to  be projected back onto the constraints by Newton iteration to find a nearby consistent initial value point satisfying the constraints. Secondly, an {\ODE} solver,
such as the Runge-Kutta method or the Euler method, is used to make a one-step prediction from the previous consistent initial value point. Through step-by-step iteration, the {\DAE} can be solved numerically, where the tolerance can be set as needed.

\begin{breakablealgorithm}
\caption{}
 \begin{algorithmic}[1]
  \State  $(\bm{c},\bm{d})=Structure(\bm{F}, \bm{x})$, such as Pryce method see \cite{Nedialkov08}.
   \end{algorithmic}
\end{breakablealgorithm}

\vspace{0.2cm}

\begin{breakablealgorithm}
\caption{}
 \begin{algorithmic}[1]
  \State  $\bm{W}=\{\bm{p}_{i}|i=1,...,m\}=witness(\bm{f})$, see \cite{WRF17}. // $m$ is the number of real witness points.
   \end{algorithmic}
\end{breakablealgorithm}

\vspace{0.2cm}

\begin{breakablealgorithm}
\caption{}
 \begin{algorithmic}[1]
  \State  $r=Rank(\bm{\Jac}(\bm{p}), AbsTol)$, see Section $2.5$ \cite{Golub13}.
   \end{algorithmic}
\end{breakablealgorithm}

\vspace{0.2cm}

\begin{breakablealgorithm}
\caption{}
 \begin{algorithmic}[1]
 \Require the top block equations $\bm{B}_{k_{c}}$, Jacobian matrix $\bm{\Jac}$ with witness point $\bm{p}$, the constant rank $r$,  absolute tolerance $AbsTol$
 \Ensure  recombination of the top block equations $\bm{B}_{k_{c}}$
 \Function {SORT}{$\bm{B}_{k_{c}}, \bm{\Jac}(\bm{p}), r, AbsTol$}
  \State $\bm{piv_{row}}=HQR(\bm{\Jac}(\bm{p}),AbsTol)$, see Section $5.2$  \cite{Golub13};
  \State $\bm{piv_{col}}=HQR(\bm{\Jac}^{T}(\bm{p}),AbsTol)$; // $\bm{piv_{row}}$ and $\bm{piv_{col}}$ are the permutation vector of rows and columns, respectively;
  \State $\bm{B}_{k_{c}}=\bm{B}_{k_{c}}[\bm{piv_{row}}]$, // swap equations;
  \State $\bm{B}_{k_{c}}=\bm{B}_{k_{c}}(\bm{X}_{k_d}[\bm{piv_{col}}])$, // swap the highest derivative variables;
  \State verify the rank of $\bm{\Jac}(\bm{p})[1:r,1:r]$ by SVD.
   \EndFunction
   \end{algorithmic}
\end{breakablealgorithm}

\vspace{0.2cm}

\begin{breakablealgorithm}
\caption{}
 \begin{algorithmic}[1]
 \Require low index {\DAE} equations $\bm{F}^{(\bm{c})}$ and dependent variables $\bm{x}$ with independent variable $t\in [t_0,t_{end}]$, initial point $\bm{p}$,  absolute tolerance $AbsTol$ and relative tolerance $RelTol$
 \Ensure  numerical solutions of {\DAE} $\bm{x}(t)$
 \Function {{DAESOLVER}}{$\bm{F}^{(\bm{c})}, \bm{x}, \bm{p}, [t_0, t_{end}], AbsTol, RelTol$} //
  \State  $j=0$, $\bm{x}(t_{0})=\bm{p}$, set step $h$ and the maximum number of iterations $N$;
  \While {$t_{j}<=t_{end}$}
   \State $\bm{x}(t_{j}) = Newton(\bm{F}^{(\bm{c-1})},\bm{x}(t_{j}), AbsTol, N)$ // Refinement, see \cite{Mathews2004};
  \State $\bm{x}(t_{j+1}) = OdeSolver(\bm{B}_{k_c}, \bm{x}(t_{j}), AbsTol, RelTol, h)$ // such as $ode45$, Euler method, $ode15i$ {\etc};
  \State $j=j+1$;
   \EndWhile
   \EndFunction
   \end{algorithmic}
\end{breakablealgorithm}

\vspace{0.2cm}

\renewcommand{\thealgorithm}{6}\label{alg:alg1}
\begin{breakablealgorithm}
\caption{Index Reduction by Embedding}
 \begin{algorithmic}[1]
    \Require {\DAE} equations $\bm{F}$ and dependent variables $\bm{x}$ with independent variable $t\in [t_0,t_{end}]$, real witness point $\bm{p}$,  absolute tolerance $AbsTol$
    \Ensure  modified {\DAE} new equations $\bm{F}^{(\bm{c})}$ and new real witness point $\bm{p}$

    \Function {IRE}{$\bm{F},\bm{x},\bm{p}_{i}, AbsTol$}
    \While {true}
    \State Structural Analysis: $(\bm{c},\bm{d})=Structure(\bm{F},\bm{x})$
    \State $n=length(\bm{c})$, $k_d = \max(d_j)$, $k_c = \max c_i$, $\delta=\sum d_j - \sum c_i$ \label{alg1goto}
    \State Construct: $\bm{F}^{(\bm{c})}$  , $\bm{B}_{k_{c}}$,  $\bm{\Jac}$ by Equation (\ref{eq:DefFc},\ref{eq:B_i},\ref{Jac})
    \State $r=Rank(\bm{\Jac}(\bm{p}), AbsTol)$
    \If{$r=n$}
    \State \Return  $\bm{F}^{(\bm{c})}$, $\bm{p}$
    \ElsIf {$\delta-(n-r) \leq 0$}
    \State \Return Error // this {\DAE} does not have a solution.
    \EndIf
    \State $\{\bm{f(s,y,z)},\bm{g(s,y,z)}\}=$SORT$(\bm{B}_{k_{c}}, \bm{\Jac}(\bm{p}), r, AbsTol)$
    \State Introduce $n$ new equations $\hat{\bm{F}}=\{\bm{f(s,y,z)},\bm{g(s,y,z)}\}$
    \State Replace $\bm{s}$  by $\bm{u}$ in $\hat{\bm{F}}$ // refer to Definition {\ref{define_IRE}}
     \State Replace $\bm{y}$ by random constants $\bm{\xi}$ in $\hat{\bm{F}}$
     \State Substitute $\{t_0, \bm{p}, \bm{\xi}\}$ into $\bm{f(s,y,z)}$ to calculate $\bm{s}$, note as $\hat{\bm{u}}$
    \State  $\bm{p}\leftarrow(\bm{p}, \hat{\bm{u}})$ //  corresponding lifting
of consistent initial value
     \State $\bm{F}^{aug} = \{\bm{f(s,y,z)},\hat{\bm{F}}\}$
     \State $\bm{F} \leftarrow \{ \bm{F}^{(\bm{c}-1)}, \bm{F}^{aug}\}$, $\bm{x} \leftarrow (\bm{x},\bm{u})$ //extend equations and variables
     \If {$\bm{F}^{aug}$ satisfies Lemma {\ref{lem:lifting}}}
     \State $\bar{\bm{c}}= [\bm{0}_{r}, \bm{1}_{n}]$, $\bar{\bm{d}}= [\bm{d},\bm{1}_{r}]$ by Equation (\ref{opt_soln})
     \State  $\bm{c}=[\bm{0}_{(\sum c_{j})},\bar{\bm{c}}]$, $\bm{d}=\bar{\bm{d}}$
      \State Goto \ref{alg1goto}
     \EndIf
    \EndWhile
    \EndFunction
 \end{algorithmic}
\end{breakablealgorithm}

\renewcommand{\thealgorithm}{7}\label{alg:alg2}
\begin{breakablealgorithm}
\caption{Global Structural Differentiation Method}
 \begin{algorithmic}[1]
    \Require {\DAE} equations $\bm{F}$ and dependent variables $\bm{x}$ with independent variable $t\in [t_0,t_{end}]$,  absolute tolerance $AbsTol$ and relative tolerance $RelTol$
    \Ensure numerical solutions of {\DAE} $\bm{x}^{*}(t)$
    \State Initialization:  check the number of equations \#eqns and dependent variables \#dvars
    \If {\#eqns $\neq$ \#dvars} \State \Return False \EndIf
    \State Set $\bm{x}^{*}(t)=\{\}$
    \State Structural Analysis: $(\bm{c},\bm{d})=Structure(\bm{F},\bm{x})$
    \State Construct: prolonged system $\bm{F}^{(\bm{c})}$,  Jacobian matrix $\bm{\Jac}$
    \State Find real witness points: $ \bm{W}=\{ \bm{p}_{i}=witness(\bm{F}^{(\bm{c-1})}(t_0))|i=1,...,m\}$  //$m$ is number of real witness points.
    \For {$\bm{p}_i\in \bm{P}$}
    \State $\{\tilde{\bm{F}},\tilde{\bm{p}}_{i}\}:=IRE(\bm{F},\bm{x},\bm{p}_{i}, AbsTol)$
    \State $\tilde{\bm{x}}(t)=$DAESOLVER$(\tilde{\bm{F}},\bm{x},\tilde{\bm{p}}_{i}, [t_0, t_{end}], AbsTol, RelTol)$
    \State $\bm{x}^{*}(t)=\{\bm{x}^{*}(t), \tilde{\bm{x}}(t)[1,...,n]\}$
    \EndFor
    \State \Return $\bm{x}^{*}(t)$
 \end{algorithmic}
\end{breakablealgorithm}

\section{Examples}\label{sec:ex}
\sloppy{}

In this section, we use five examples. These included three symbolic cancellation examples: transistor amplifier, modified pendulum and ring modulator.
Also included are two numerical degeneration examples: Example \ref{ex:4} and the bending deformation of a beam.

In a similar manner to that described in \cite{Taihei19}, we compare several methods on the {\DAE} for the above examples.
In particular, we apply the following four methods to the above $5$ {\DAE}s: (a) Pryce method, (b) the substitution method, (c) the augmentation method, (d) the IRE method.
We use Matlab R$2021$a for the numerical computations with the error settings AbsTol =$10^{-6}$ and RelTol = $10^{-3}$.

\subsection{Transistor Amplifier (index-$1$)}\label{ssec:exam1}
%
%

First, we discuss a transistor amplifier example existing in electrical network \cite{Mazzia08}. It's a linear {\ODE} system with an identically singular Jacobian matrix. For more details, see \cite{Taihei19}. The structural information obtained by the Pryce method is that the dual optimal solution is $\bm{c}=\bm{0}_{1\times 8}$ and $\bm{d}=\bm{1}_{1\times 8}$, such that $n=\delta=8$. For the Jacobian matrix,  we have $\rank \bm{\Jac} = r =\rank \bm{\Jac}[(4,6,3,1,7),(4,6,3,1,7)]= 5$.

Obviously, we still cannot solve the system directly after the Pryce method. Fortunately, as it is a linear {\DAE}, almost all existing improved structural methods can be used to regularize it.

It is easy to get $\bm{F}=\bm{F}^{(\bm{c})}$ since $\bm{c}$ is a zero vector.
By the IRE method, according to Definition {\ref{define_IRE}}, we have $\bm{s}=\{\dot{x}_{4}, \dot{x}_{6}, \dot{x}_{3}, \dot{x}_{1}, \dot{x}_{7}\}$, $\bm{y}=\{\dot{x}_{2}, \dot{x}_{5}, \dot{x}_{8}\}$, $\bm{f(s,y,z)}=\{F_{4}, F_6, F_3, F_1, F_7\}$ and $\bm{g(s,y,z)}=\{F_{2}, F_5, F_8\}$. Thus, $\hat{\bm{F}}=\{\bm{f(u,\xi,z)},\bm{g(u,\xi,z)}\}$,
where $\bm{s}$ and $\bm{y}$ are replaced by $(u_1, u_2, u_3, u_4, u_5)$ and some random constants $(\xi_1, \xi_2, \xi_3)$ respectively.  Finally, we construct a new top block $\bm{F}^{aug} = \{\bm{f(s,y,z)},\hat{\bm{F}}\}$ of the prolonged {\DAE}, where $\hat{\bm{F}}$ is given below.


\[\hat{\bm{F}}=\left\{\begin{array}{rcc}
C_1\cdot(u_4-\xi_1)+(x_{1}-U_{e})/R_{0}&=&0\\
C_1\cdot(u_4-\xi_1)-(1-\alpha)\cdot f(x_{2}-x_{3})+U_b/R_2-x_{2}\cdot(1/R_1+1/R_2)&=&0\\
C_2\cdot u_3+x_{3}/R_3-f(x_{2}-x_{3})&=&0\\
C_3\cdot(u_1-\xi_2)+x_{4}/R_4-U_b/R_4+\alpha\cdot f(x_{2}-x_{3})&=&0\\
C_3\cdot(u_1-\xi_2)-x_{5}\cdot(1/R_5+1/R_6)+U_b/R_6-(1-\alpha)\cdot f(x_{5}-x_{6})&=&0\\
C_4\cdot u_2+x_6/R_7-f(x_{5}-x_{6})&=&0\\
C_5\cdot(u_5-\xi_3)+x_{7}/R_8-U_b/R_8+\alpha\cdot f(x_{5}-x_{6})&=&0\\
C_5\cdot(u_5-\xi_3)-x_{8}/R9&=&0
\end{array}\right.\]

\begin{figure}[htbp]
\centering
\subfigure[Pryce method]{
\includegraphics[width=0.45\textwidth,height=0.30\textwidth]{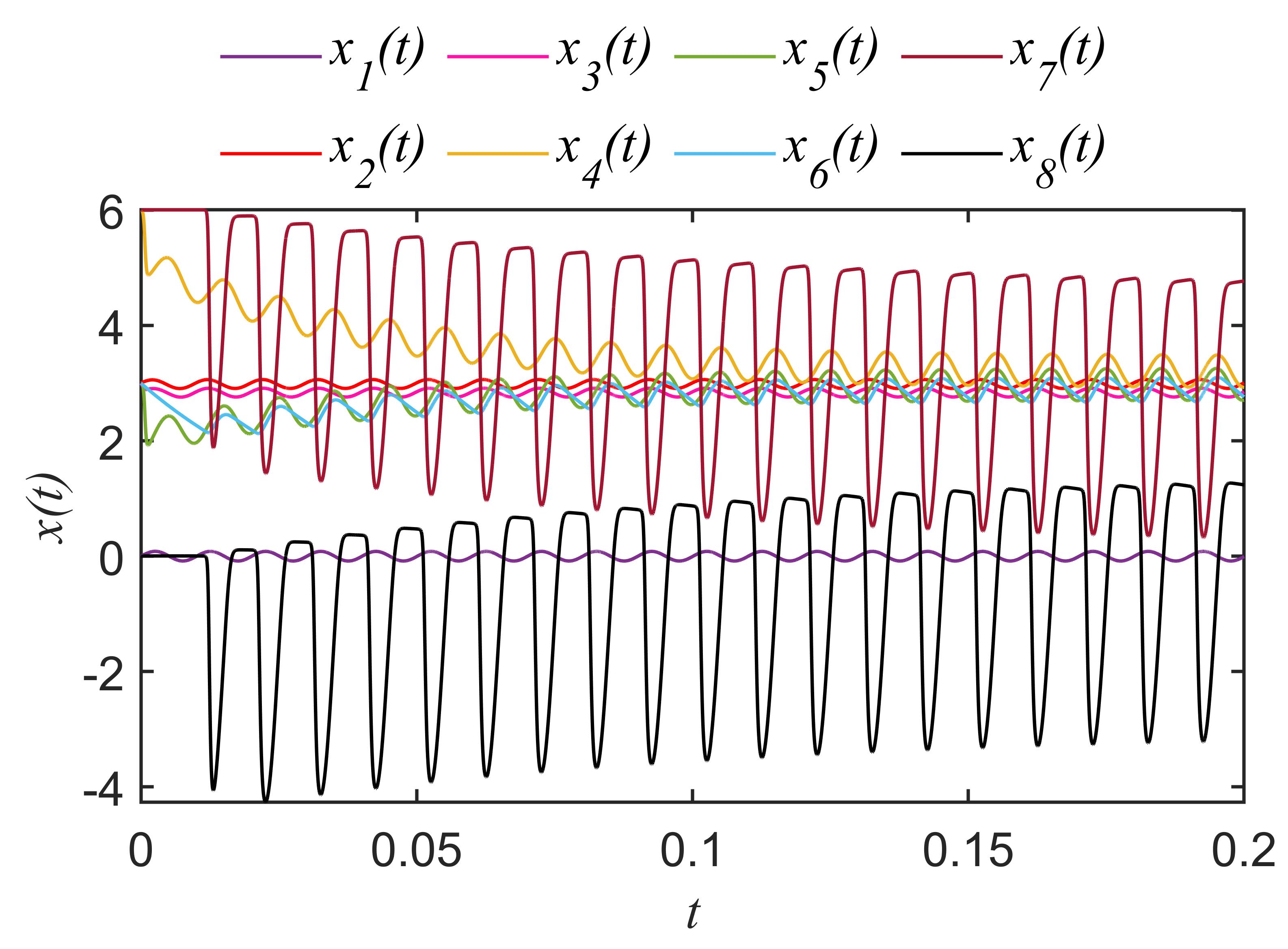}
}
\quad
\subfigure[substitution]{
\includegraphics[width=0.45\textwidth,height=0.30\textwidth]{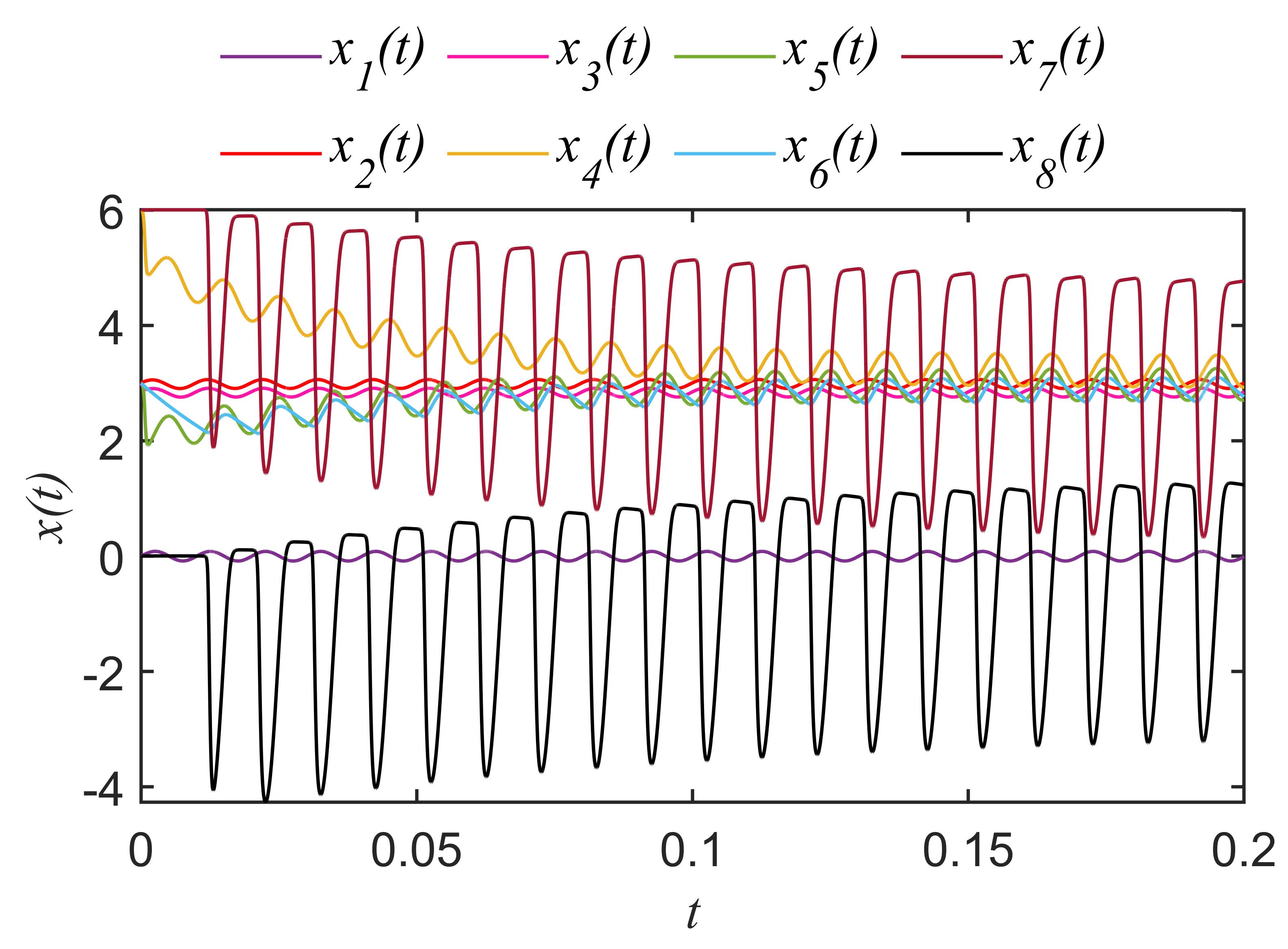}
}
\quad
\subfigure[augmentation]{
\includegraphics[width=0.45\textwidth,height=0.30\textwidth]{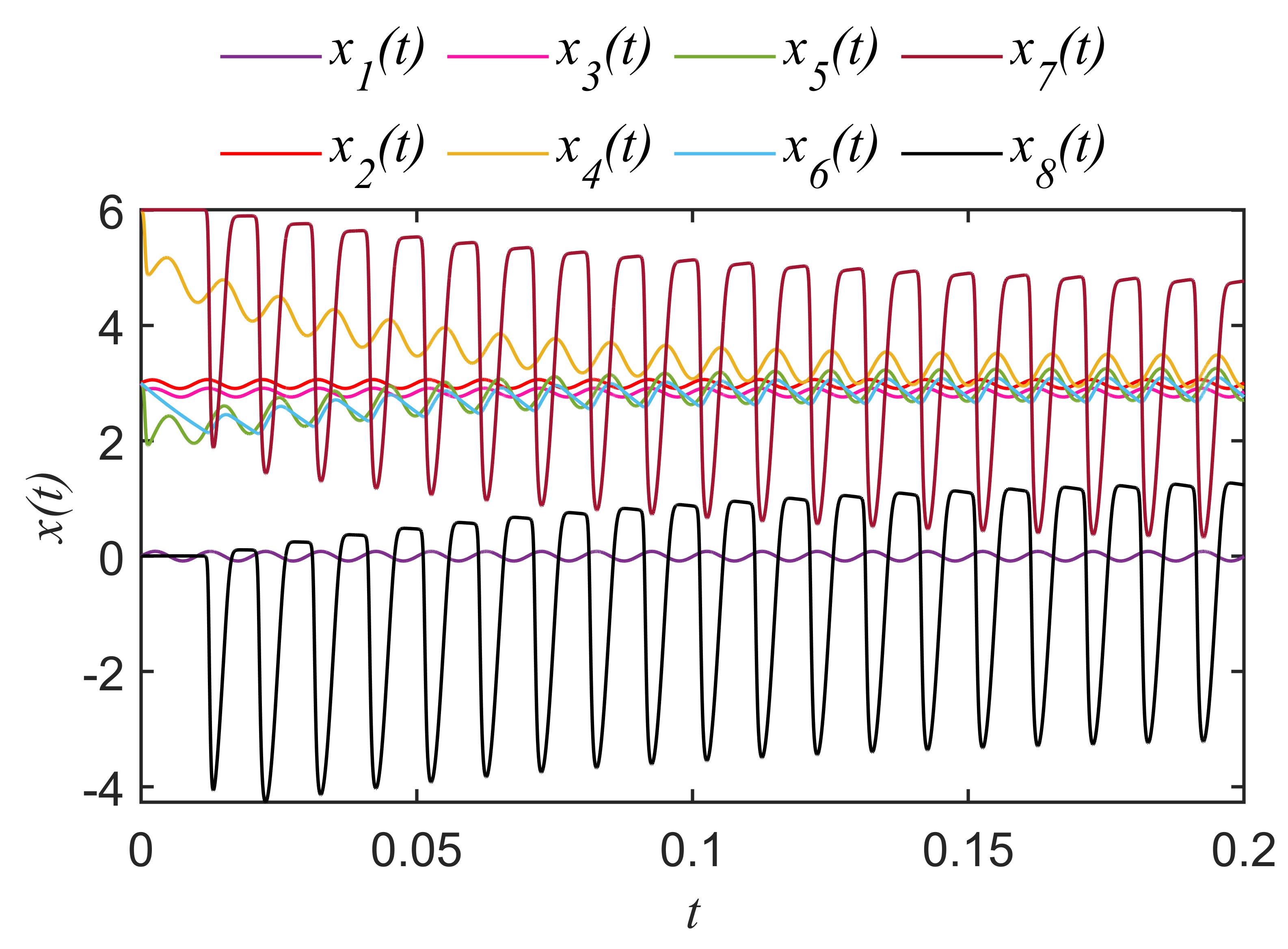}
}
\quad
\subfigure[IRE]{
\includegraphics[width=0.47\textwidth,height=0.31\textwidth]{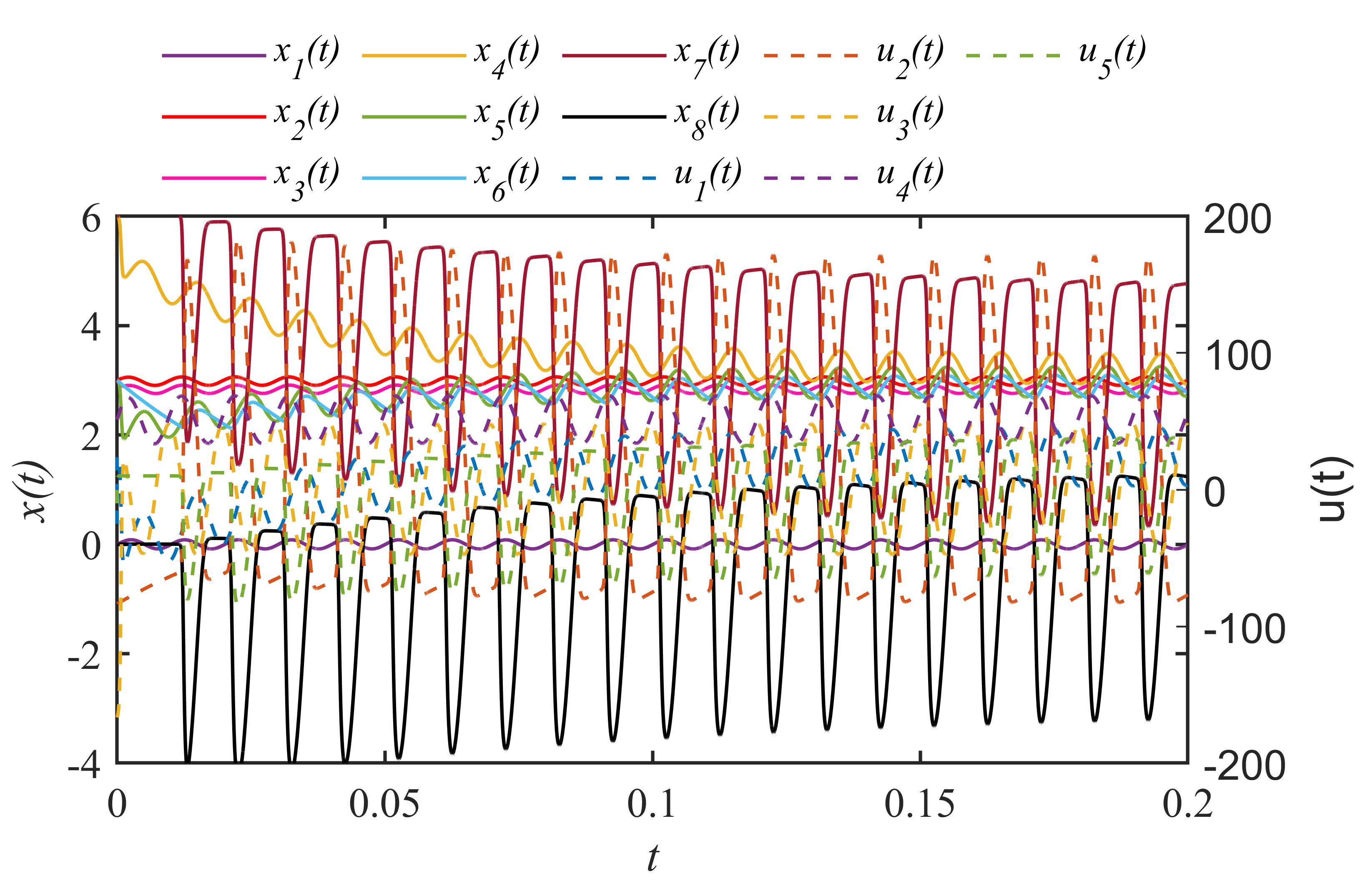}
}
\caption{Numerical Solution of Transistor Amplifier}\label{fig:amp}
\end{figure}

After the IRE method, we can directly construct an optimal solution of {\ILP} with $\bar{\bm{c}}=(\bm{0}_{1\times 5},\bm{1}_{1\times 8})$ and $\bar{ \bm{d}}=(\bm{1}_{1\times 8},\bm{1}_{1\times 5})$ by Lemma \ref{lem:lifting}.
Actually it is equivalent to the optimal solution $\bar{\bm{c}}=(\bm{0}_{1\times 5},1,1,0,1,1,0,1,1)$ and $\bar{ \bm{d}}=(\bm{1}_{1\times 8},1,0,0,1,1)$  calculated by {\ILP} and
both give the same optimal value of the new system $\bar {\delta} =  \delta-n+r = 8-8+5$.

Then we can verify that the determinant of the new Jacobian matrix is a non-zero constant. Furthermore, the IRE method in this example finish the index reduction just by one step, rather than $3$ steps repeatedly by the substitution method or the augmentation method shown in section $6.2$ \cite{Taihei19}. In other words, it shows the IRE method is more efficient for this example.

Specifically, for numerical solution, the initial value of $\bm{u}(0)$ and $\bm{\xi}$ are set corresponding to $\dot{\bm x}(0)$ in Section A.$2$ of \cite{Taihei19}, respectively.

\subsection{Non-linearly Modified Pendulum (index-$3$)}\label{ssec:exam2}
%

This nonlinear {\DAE} system consisting of $4$
differential equations and $1$ algebraic equation, is obtained by dynamic analysis and modeling of a simple pendulum. See \cite{Taihei19} for more details.

After structural analysis, we get the dual optimal solution is $\bm{c}=(0,0,1,0,0)$ and $\bm{d}=(1,\cdots,1)$, with $\delta=4$ and $n=5$. Moreover, the rank of jacobian matrix is $\rank \bm{\Jac} = r =\rank \bm{\Jac}[(3,2,1,4),(3,1,4,5)]= 4$. Thus, the constraint is $\bm{F}^{(\bm{c}-1)}=\{{x}_{1}^{2}+{x}_{2}^{2}\cdot\sin({x}_{3})^{2}-1=0\}$.

From Section {\ref{ssec:exam1}}, by the IRE method, let $\bm{s}=\{\dot{x}_{3}, \dot{x}_{1}, \dot{x}_{4}, \dot{x}_{5}\}$, $\bm{y}=\{\dot{x}_{2}\}$, $\bm{f(s,y,z)}=\{F_{3}, F_2, F_1, F_4\}$ and $\bm{g(s,y,z)}=\{ F_5\}$.  Then we need to replace $\bm{s}$  by $\{u_1, u_2, u_3, u_4\}$ and $\bm{y}$ by a random constant $\xi$ in $\hat{\bm{F}}$, respectively. Finally, we can get a modified {\DAE} $\{\bm{F}^{(\bm{c}-1)}, \bm{F}^{aug}\}$, in which $\bm{F}^{aug} = \{\bm{f(s,y,z)},\hat{\bm{F}}\}$.

\[\hat{\bm{F}}=\left\{\begin{array}{rcc}
{u}_{3}-{x}_{1}\cdot{x}_{2}\cdot\cos(x_{3})&=&0\\
{u}_{4}-{x}_{2}^{2}\cdot\cos({x}_{3})\cdot\sin(x_{3})+g&=&0\\
2\cdot{x}_{1}\cdot u_{1}+2\cdot{x}_{2}\cdot \xi\cdot\sin({x}_{3})^{2}+2\cdot{x}_{2}^{2}\cdot\sin({x}_{3})\cdot\cos({x}_{3})\cdot u_{2}&=&0\\
\tanh((u_{1}-x_{4}))&=&0\\
\xi\cdot\sin(x_{3})+x_{2}\cdot u_{2}\cdot\cos(x_{3})-x_{5}&=&0
\end{array}\right.\]

We can construct a new optimal solution $(\bar{\bm{c}},\bar{\bm{d}})$ by {\ILP} for $\bm{F}^{aug}$ by Lemma {\ref{lem:lifting}} directly, which yields ${\bm{c}}=(\bm{0}_{1\times 4},0,0,\bm{1}_{1\times 3})$ and ${\bm{d}}=(\bm{1}_{1\times 5},1,1,0,0)$ with the same optimal value $\bar {\delta} = \sum{\bar{d}_{j}}-\sum{\bar{c}_{i}}-\#eqns(\bm{F}^{(\bm{c}-1)})=9-5-1=\delta-n+r=4-5+4$. 


Unfortunately, the Jacobian matrix of the new top block $ \bm{F}^{aug} $ is also singular, with $\rank \bm{\Jac}(\bm{F}^{aug}) =\rank \bm{\Jac}[(1:6,8:9),(1,3:9)] = 8$. Similarly, we need another modification of $\bm{F}^{aug}$ by the IRE method. Finally, this {\DAE} system has been regularized.  The final optimal value is $2 = \bar{\delta}-9+8$. The numerical results are shown in Figure \ref{fig:pen}.

Compared with the one additional equation of the augmentation method, the IRE method in this example will introduce more equations which will affect the efficiency of the numerical solution, although both methods can be successful after two steps of regularization. However, this adverse effect only exists when the Jacobian matrix is very close to being full rank, {\ie} $r=n-1$.


\begin{figure}[htbp]
\centering
\subfigure[Pryce method]{
\includegraphics[width=0.45\textwidth,height=0.30\textwidth]{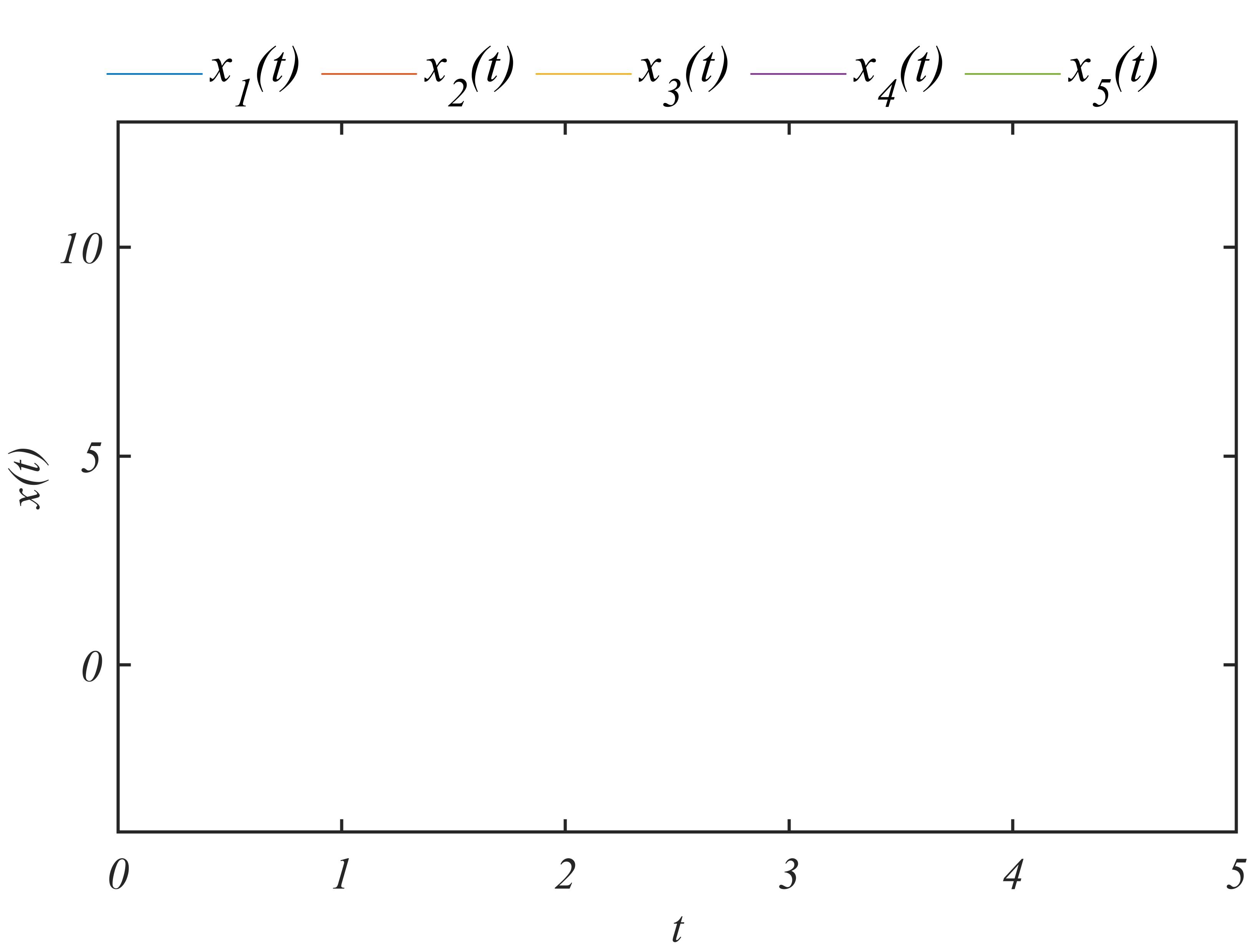}
}
\quad
\subfigure[substitution]{
\includegraphics[width=0.45\textwidth,height=0.30\textwidth]{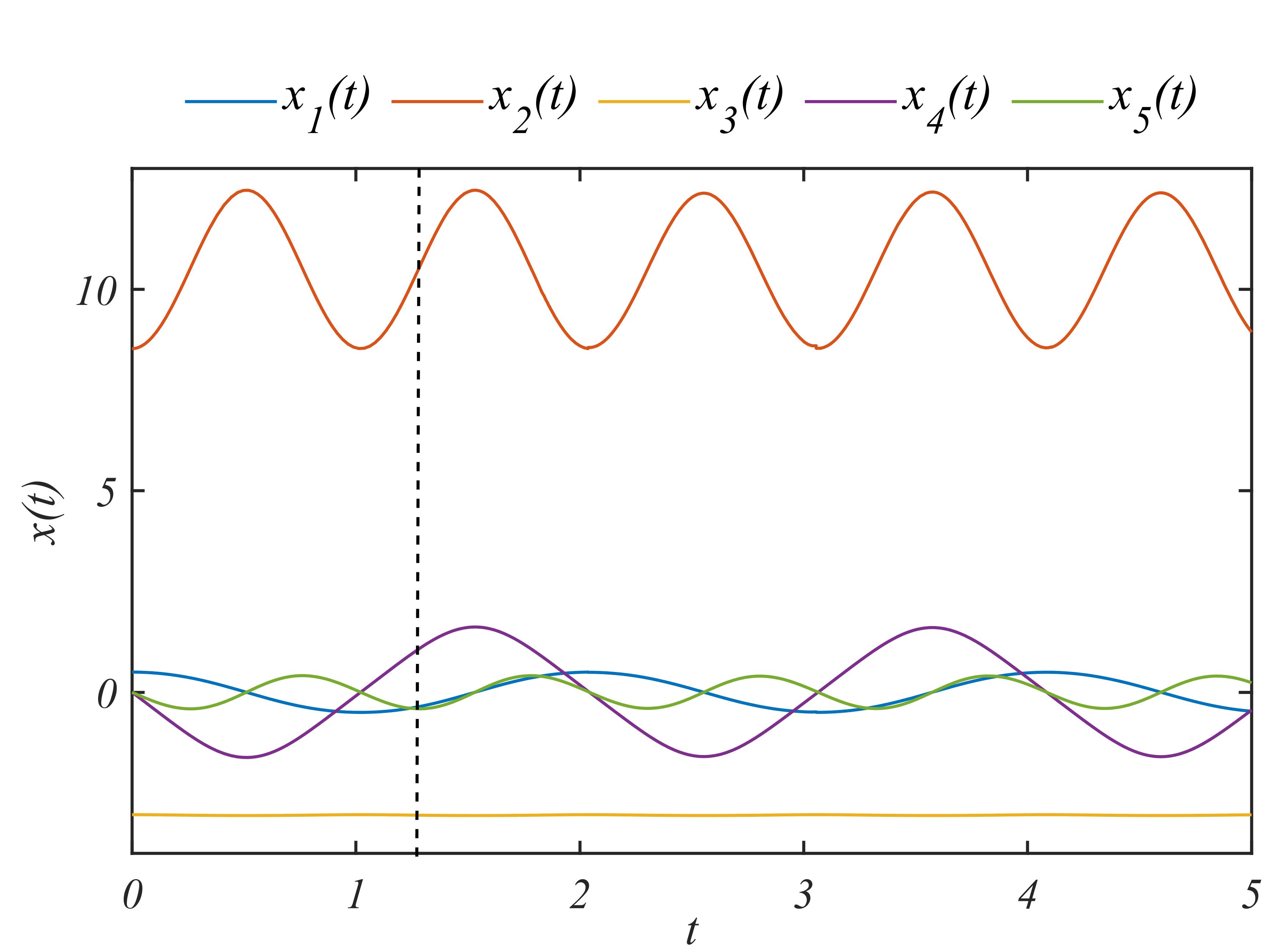}
}
\quad
\subfigure[augmentation]{
\includegraphics[width=0.45\textwidth,height=0.30\textwidth]{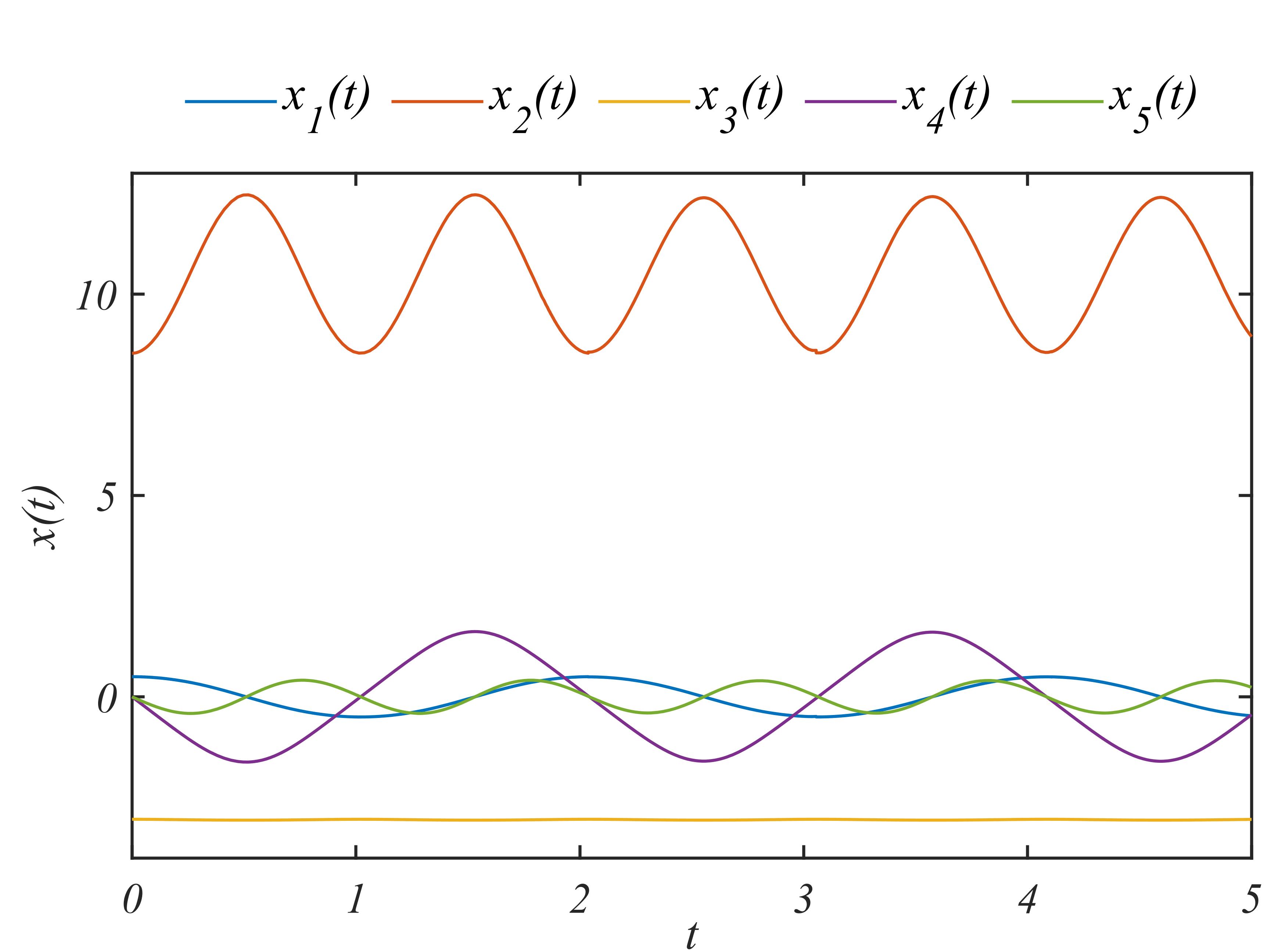}
}
\quad
\subfigure[IRE]{
\includegraphics[width=0.45\textwidth,height=0.30\textwidth]{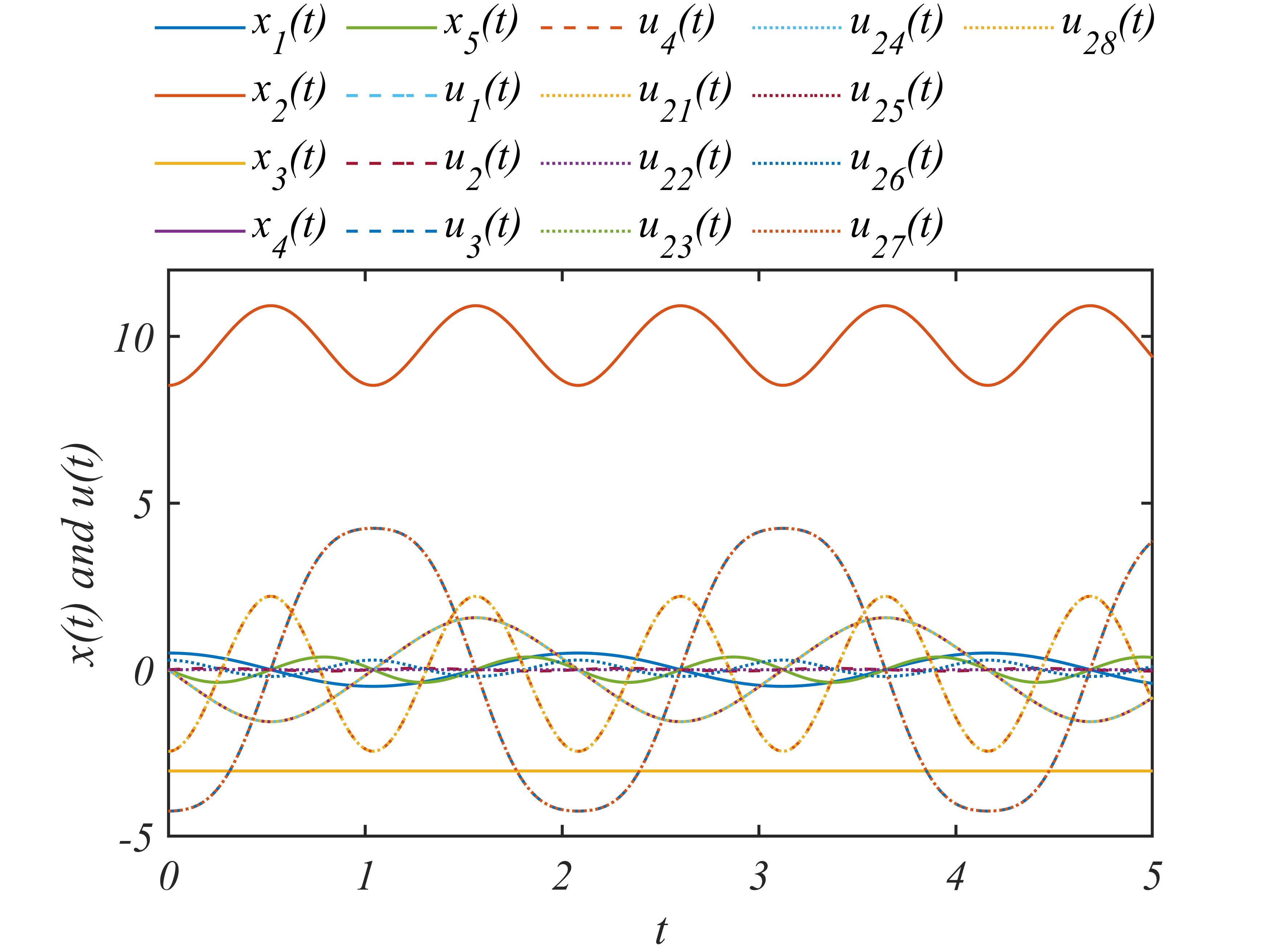}
}
\caption{Numerical Solution of Non-linearly Modified Pendulum}\label{fig:pen}
\end{figure}

\subsection{Ring Modulator (index-$2$)}
%
%

This {\DAE} of index 2, consists of $11$ differential equations and $4$ algebraic equations and originates from electrical circuit analysis, describing the behavior of a ring modulator. For more details, see \cite{Taihei19}.

In this {\DAE}, the prolongation order $\bm{c}$ is a zero vector, and the highest derivative of variables $\bm{d}=(1,1,\bm 0_{1\times 4},\bm 1_{1\times 9})$, which means $\bm{F}=\bm{F}^{(\bm{c})}$  and $\delta=11$, $n=15$. The system's Jacobian matrix, has constant rank $\rank \bm{\Jac}= r =\rank \bm{\Jac}[(1:2,4:15),(1:10,12:15)]= 14$. That means $\bm{f(s,y,z)}=\{F_{1}, F_2, F_4,\cdots F_{15}\}$, and $\hat{\bm F}$ consists of $15$ new equations in which the highest derivative of variables $\{{x}_{1},\cdots,{x}_{10},{x}_{12},\cdots,{x}_{15}\}$ are replaced by $\{u_1, \cdots, u_{14}\}$ respectively, and $\dot{x}_{11}$ is replaced by a random constant $\xi$. Further, we can get $\bm{F}^{aug}=\{\bm{f(s,y,z)},\hat{\bm F}\}$ with $29$ variables and a non-singular Jacobian matrix.  By Lemma {\ref{lem:lifting}}, the optimal value of the new {\DAE} is $\bar{\delta} = \delta-n+r = 11-15+14$.

Note that although some highest derivatives of variables are zero, such as the highest derivatives of $x_{3},x_{4},x_{5},x_{6}$, they also need to be replaced. Numerical results are shown in Figure \ref{fig:ring}.

\begin{figure}[htbp]
\centering
\subfigure[Pryce method]{
\includegraphics[width=0.45\textwidth,height=0.30\textwidth]{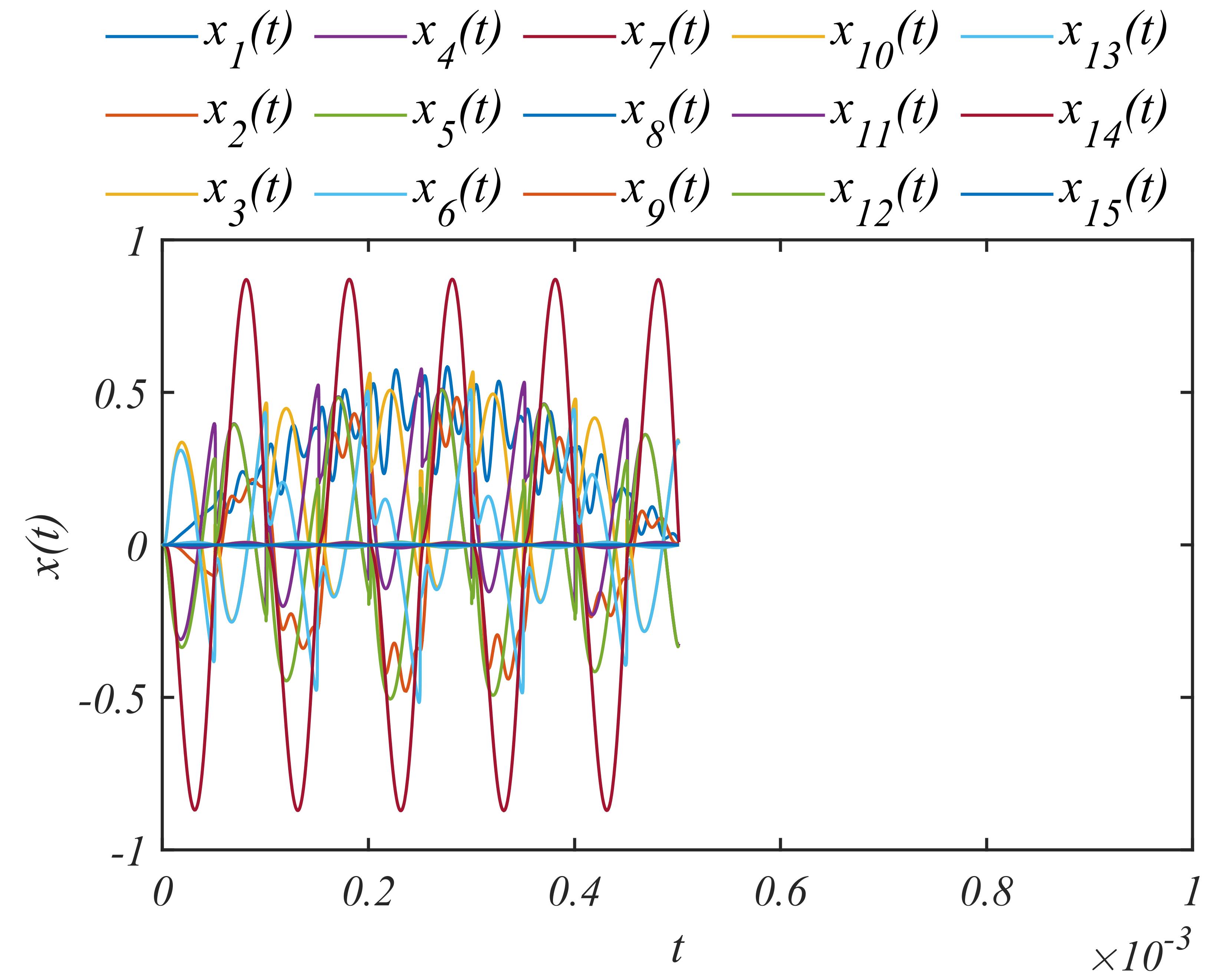}
}
\quad
\subfigure[substitution]{
\includegraphics[width=0.45\textwidth,height=0.30\textwidth]{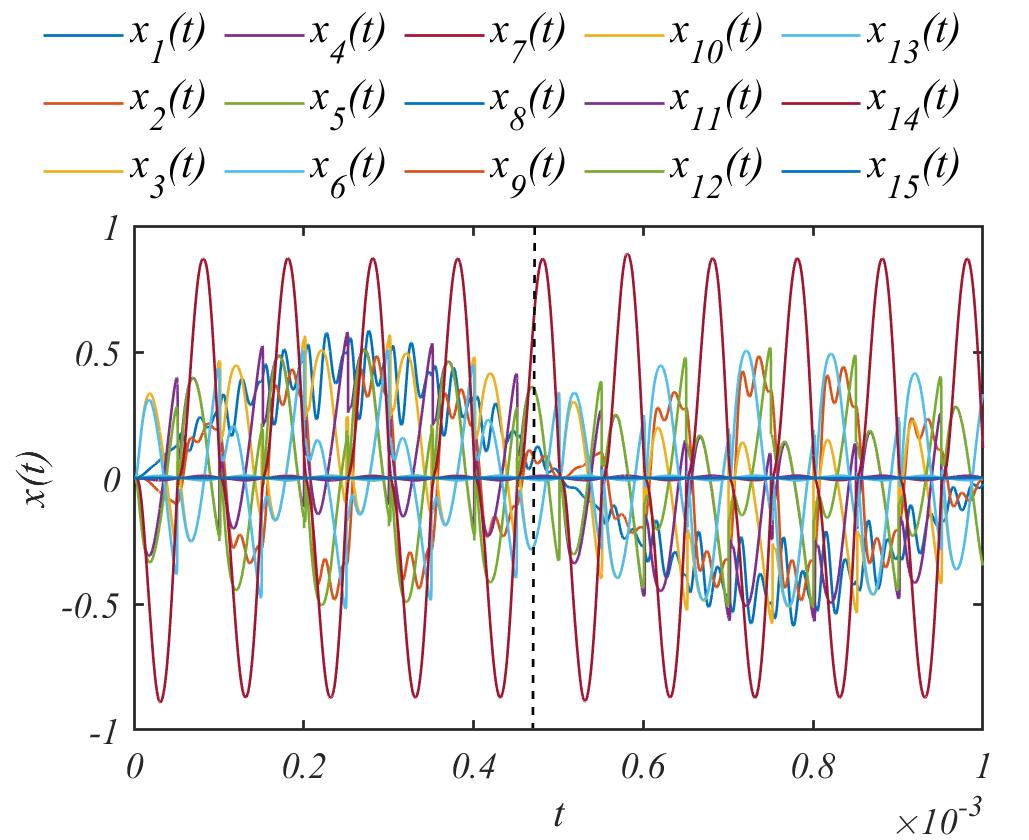}
}
\quad
\subfigure[augmentation]{
\includegraphics[width=0.45\textwidth,height=0.30\textwidth]{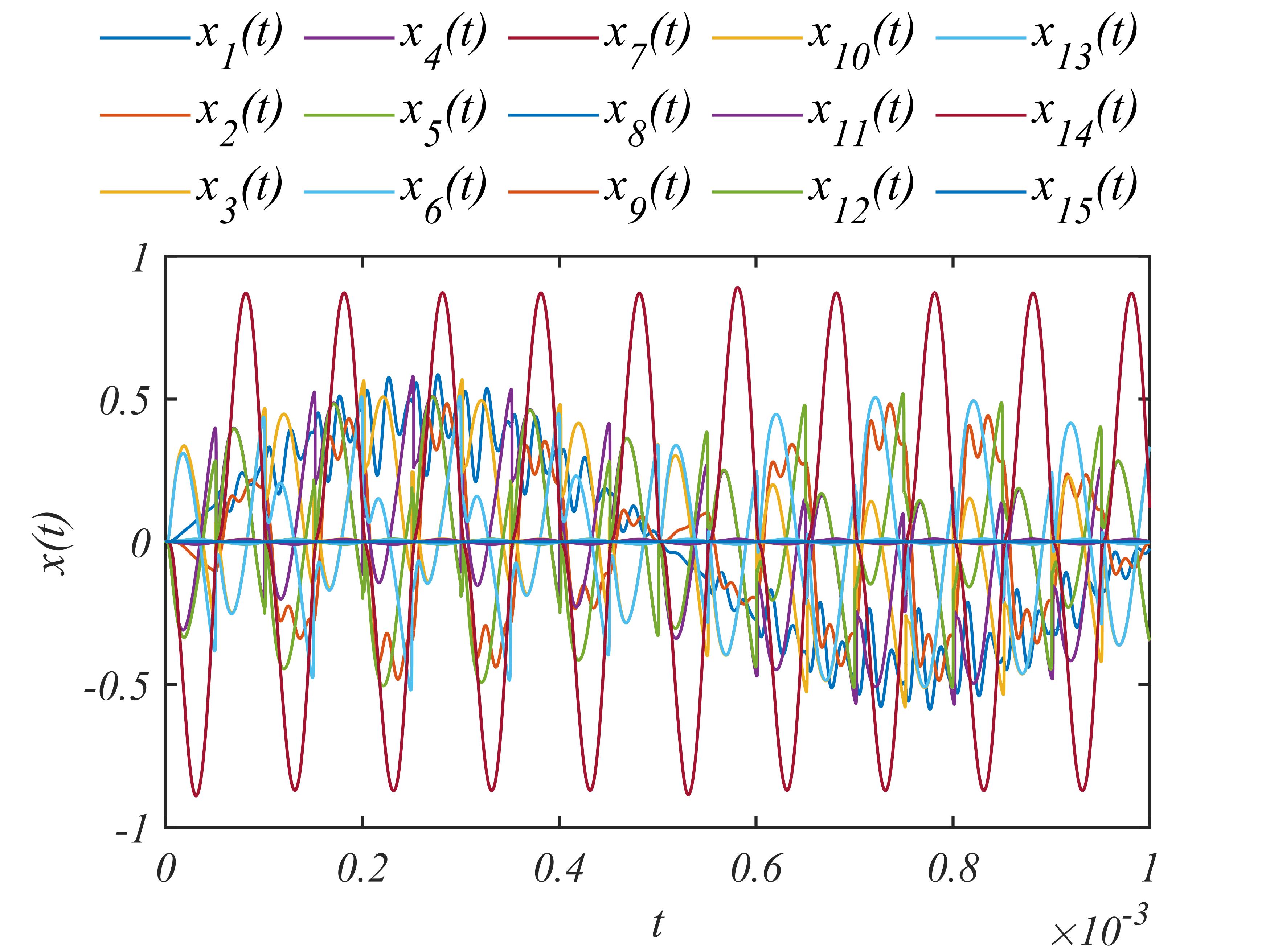}
}
\quad
\subfigure[IRE]{
\includegraphics[width=0.45\textwidth,height=0.30\textwidth]{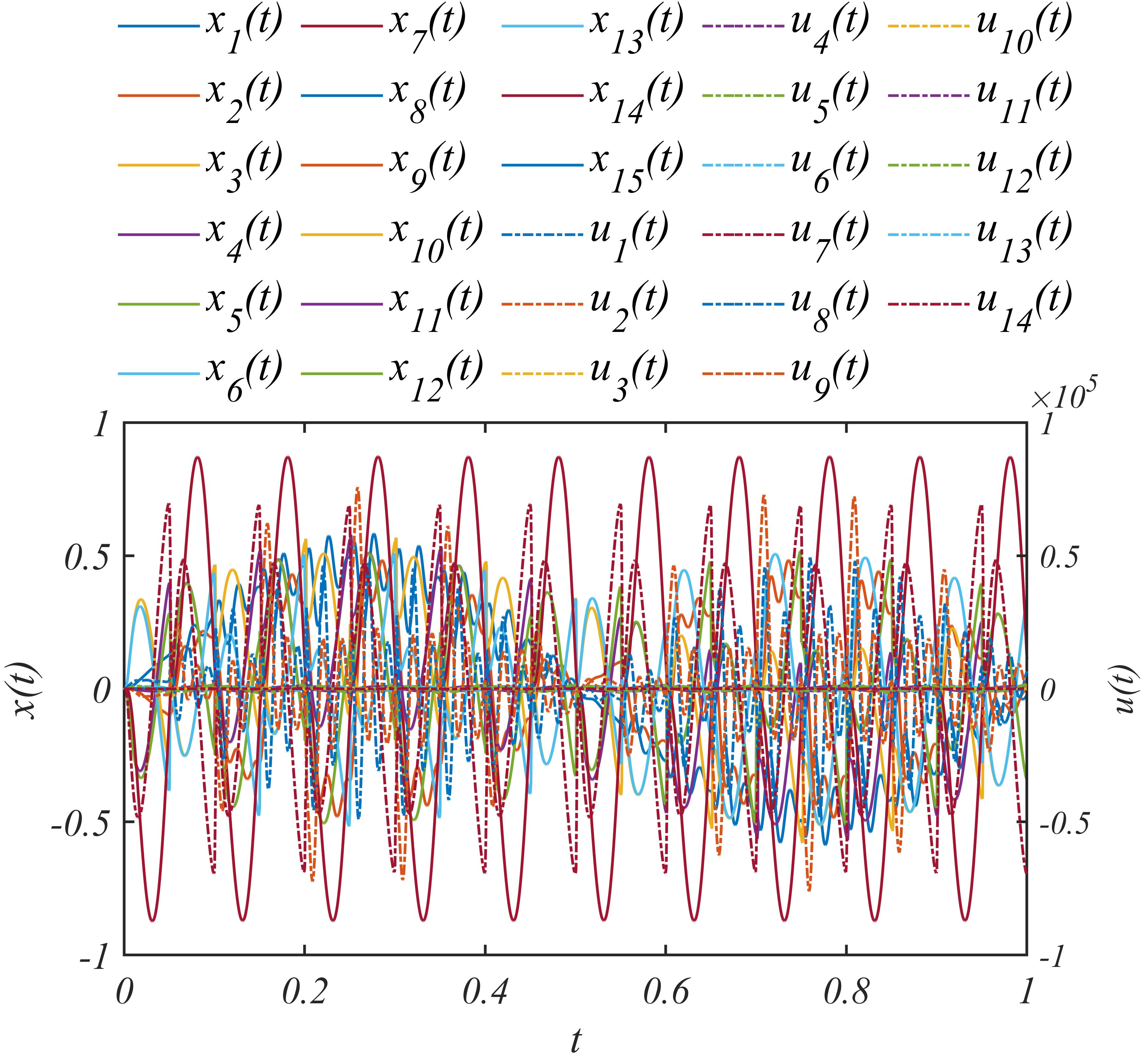}
}
\caption{Numerical Solution of Ring Modulator}\label{fig:ring}
\end{figure}

\subsection{Example \ref{ex:4} (index-$2$)}

As shown in Example \ref{ex:4}, this {\DAE} is an example of numerical degeneration.  The exact solution of this {\DAE} is $x(t)=C-\cos(t)$ and $y(t)=x(t)^2$. The IRE method is essential to address the difficulties posed by numerical degeneration
for this example.

\[\bm{F}^{aug}=\left\{\begin{array}{rcc}
2\cdot u_{1}\cdot y-\xi\cdot x+2x
\left({\frac {{\rm d}x}{{\rm d}t}} \right)^{2} - {\frac {{\rm d}x}{{\rm d}t}} +\sin(t)&=&0\\
\xi-2\cdot  u_{1}\cdot  x-2\cdot\left({\frac {{\rm d}x}{{\rm d}t}} \right)^{2}&=&0\\
2\,y {\frac {{\rm d^{2}}x}{{\rm d}t^{2}}}  - x
  {\frac {{\rm d^{2}}y }{{\rm d}t^{2}}} +2x
 \left({\frac {{\rm d}x}{{\rm d}t}} \right)^{2} - {\frac {{\rm d} x }{{\rm d}t}} +\sin \left( t \right)&=&0
\end{array}\right.\]
After the IRE method, the new Jacobian matrix is
$$\bm{\Jac} =
      \left(
        \begin{array}{ccc}
          4x\cdot x_t -1 & 0 & 2y \\
          -4x_t & 0& -2x \\
         2y  &-1&0\\
        \end{array}
      \right)$$
It is obvious that the determinant of the new Jacobian matrix will not degenerate to a singular matrix by virtue of the constraints.  Numerical results for $C=2$ are shown in Figure \ref{fig:ex}.

 \begin{figure}[htbp]
\centering
\subfigure[Pryce method]{
\includegraphics[width=0.45\textwidth,height=0.30\textwidth]{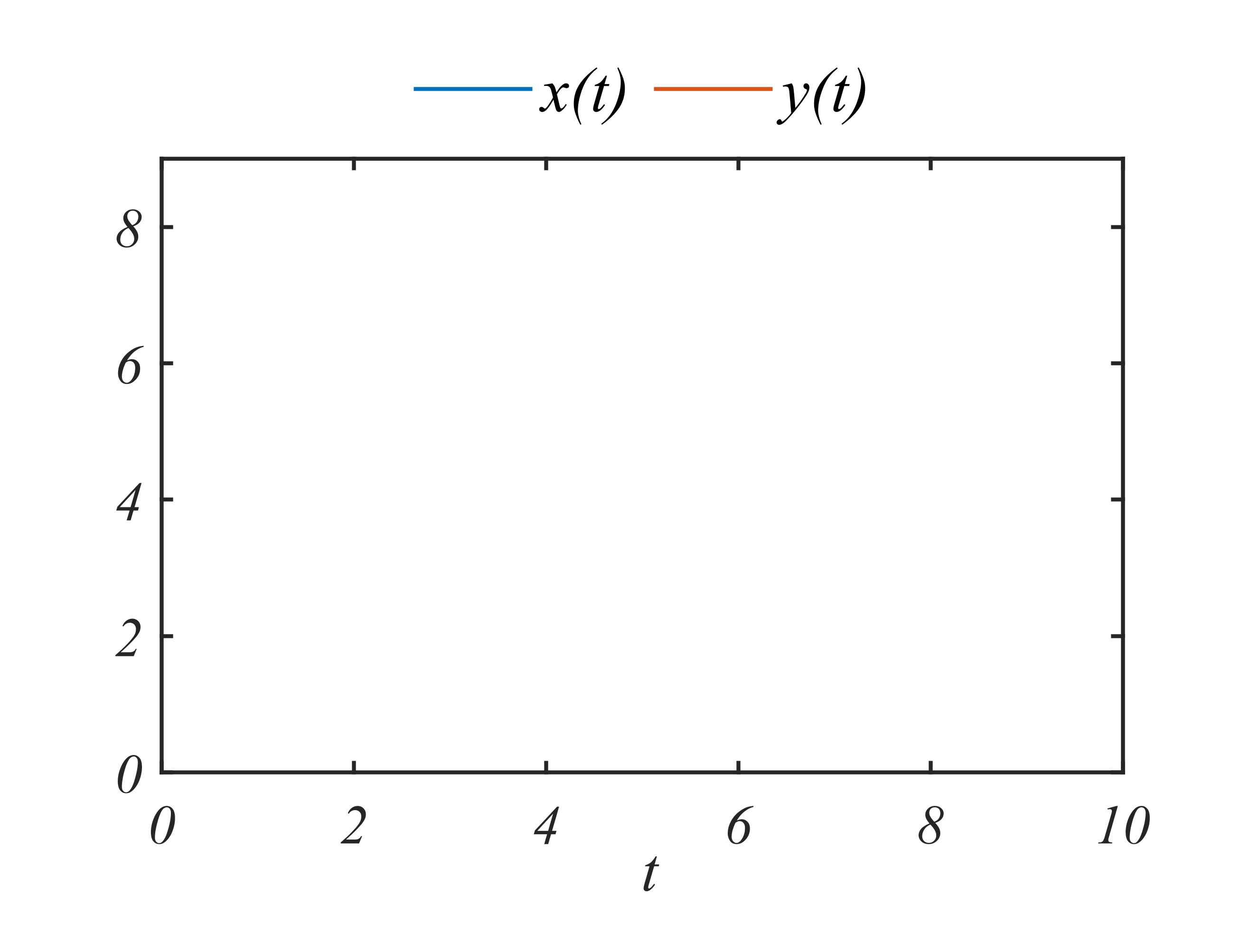}
}
\quad
\subfigure[substitution]{
\includegraphics[width=0.45\textwidth,height=0.30\textwidth]{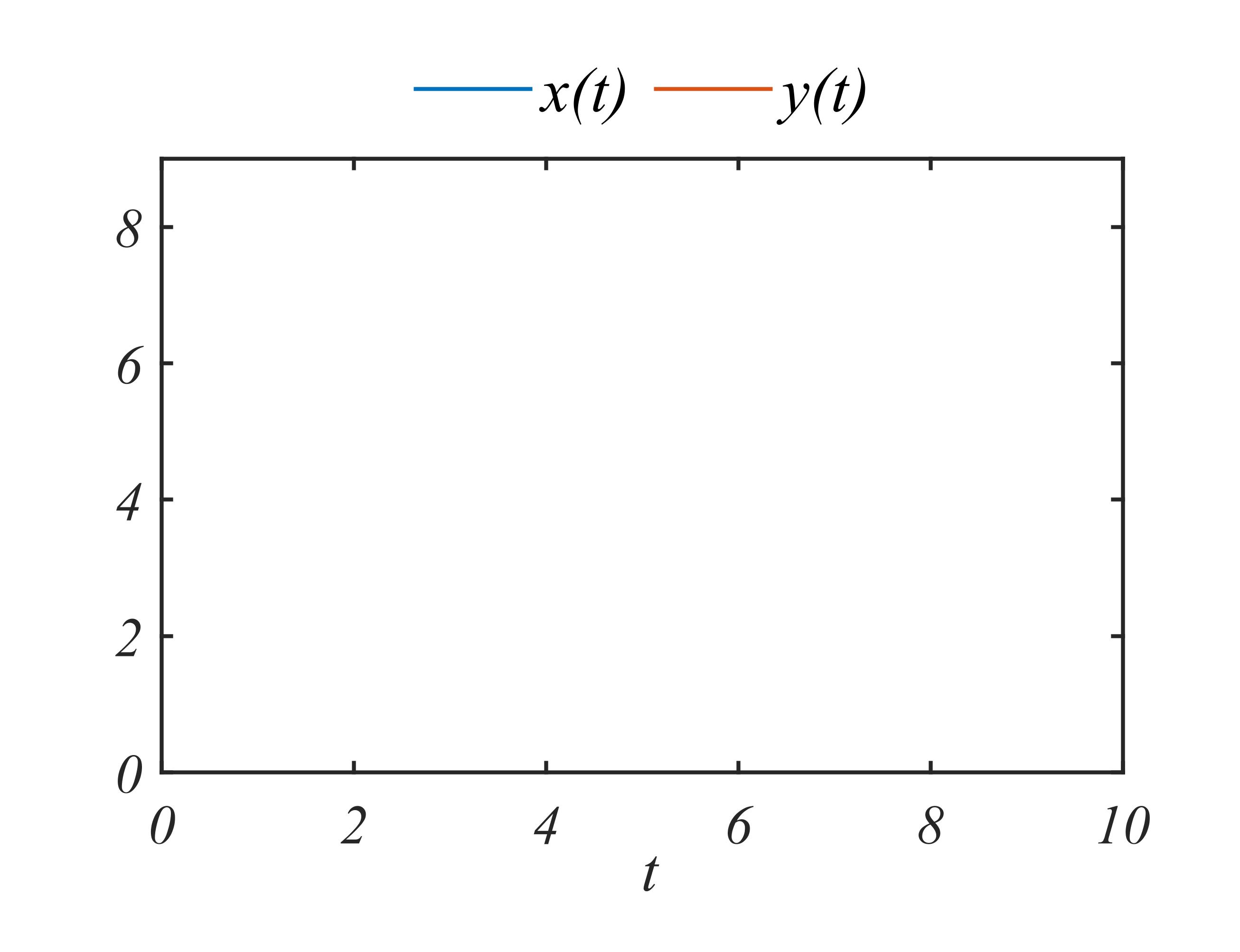}
}
\quad
\subfigure[augmentation]{
\includegraphics[width=0.45\textwidth,height=0.30\textwidth]{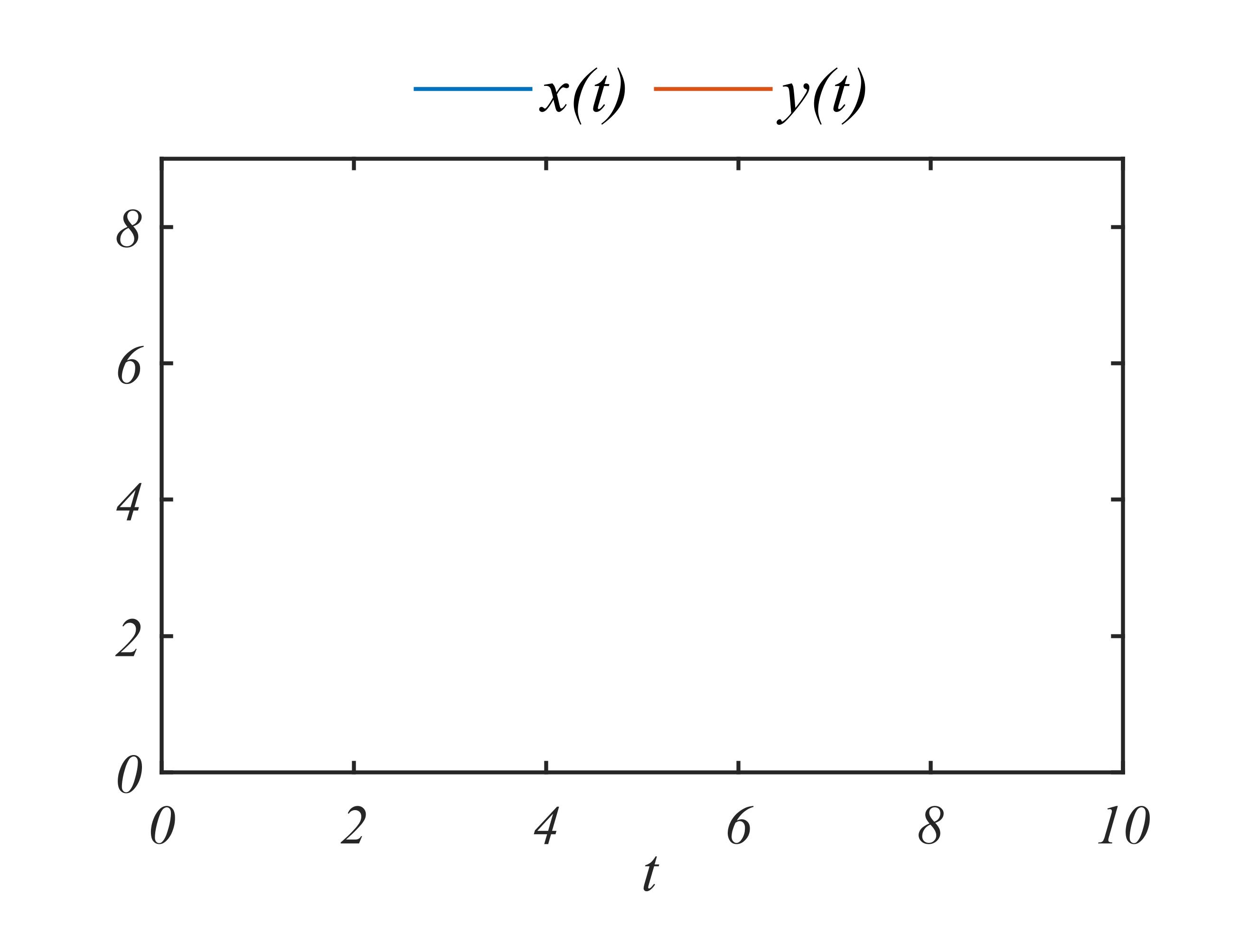}
}
\quad
\subfigure[IRE]{
\includegraphics[width=0.40\textwidth,height=0.30\textwidth]{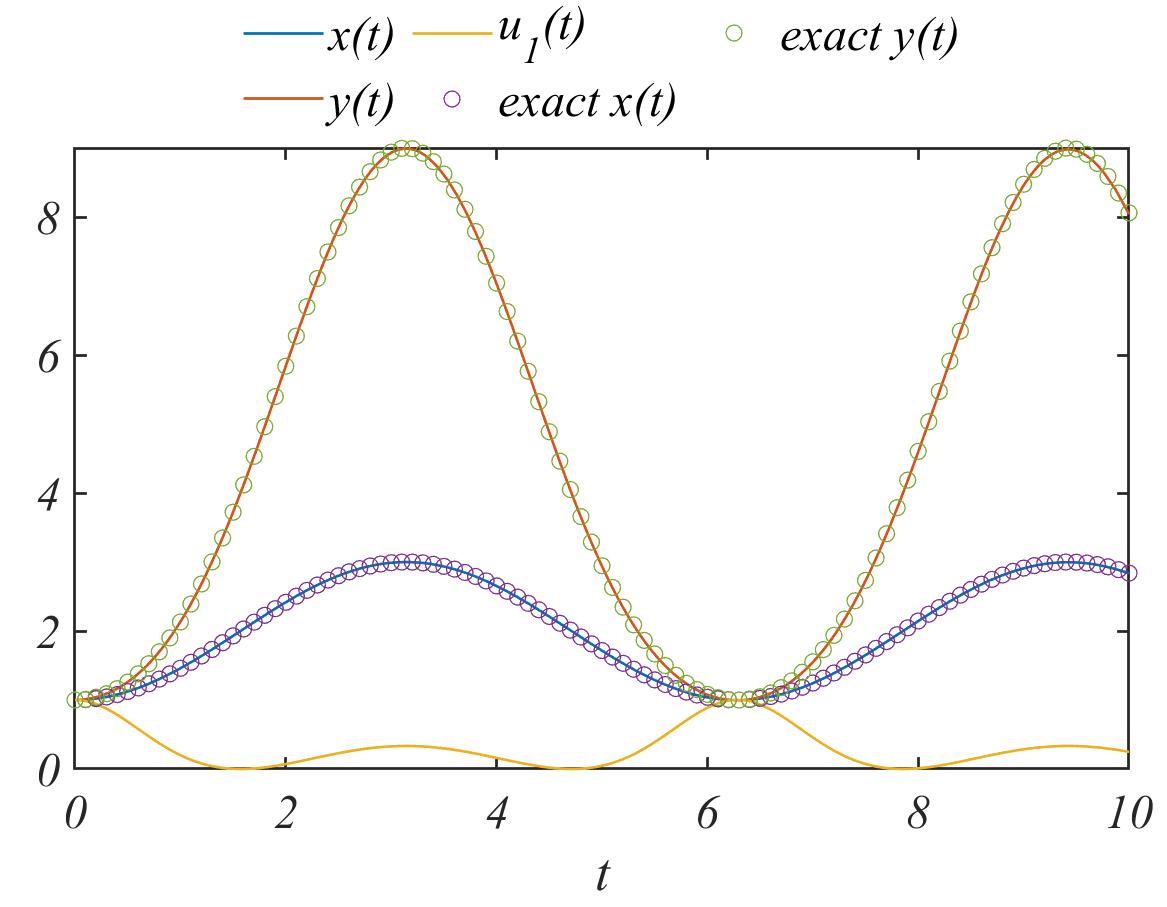}
}
\caption{Numerical Solution of Example \ref{ex:4}}\label{fig:ex}
\end{figure}

\subsection{Analysis of Bending Deformation of Beam (index-$2$)}
The specific description is given in Example \ref{ex:3}. In this example, when the elastic deformation energies of forces  are the same, we can set $\lambda = 1$. By structural analysis, the optimal solutions is $\bm{c}=(0,2)$ and $\bm{d}=(2,2)$.

 This non-linear {\DAE} has two components resulting from its constraints: one component results from $y_{1}=y_{2}$, the other component results from $y_{1}=-y_{2}$. In detail, two witness points
 are computed by the Homotopy continuation method \cite{WWX2017} where each point has
 coordinates $(y_{1}, y_{2}, \dot{y}_{1},\dot{y}_{2})$:
\[
\begin{array}{rrrrrr}
  ( & -0.43092053722 &  -0.43092060160 &  -0.27565041470 & -0.27565030340 & ) \\
  ( & -0.19993949748 &  +0.19993723792 &  +0.64332968577 & -0.64333747822 & ) \\
\end{array}
\]
 Because the Jacobian matrix of the polynomial constraints is singular here, a large penalty factor should be introduced in order to improve convergence.  These witness points are approximate points near the consistent initial value points, which and need to be refined by Newton iteration.

 By symbolic computation, we can get two exact solutions of above {\DAE} as
 \begin{eqnarray*}
  y_{1}(x) &=& +y_{2}(x)=C_1\cdot\sin(\frac{\sqrt{2}x}{2})+C_2\cdot\cos(\frac{\sqrt{2}x}{2})-\frac{1}{5}\cdot\sin(x)-\frac{1}{5} \\
   y_{1}(x)&=& -y_{2}(x)=-\frac{1}{5}\cdot(1-\sin(x))
 \end{eqnarray*}
 Here $C_1$ and $C_2$ are constants depending on consistent initial conditions. These exact solutions can be used to check the correctness of our numerical solution of the global structural differentiation method.

 Obviously, the Jacobian matrix is non-singular for any witness point from the component with $y_{1}=y_{2}$. This case can be solved directly after applying the Pryce method as shown in Figure \ref{fig:beam}. On the contrary, for any witness point on the component with $y_{1}=-y_{2}$, the Jacobian matrix will degenerate to a singular matrix. For this case, we have to construct its
equivalent {\DAE}, and its numerical results are shown in Figure \ref{fig:beam1}.

  \begin{figure}[htbp]
\centering
\subfigure[Pryce method]{
\includegraphics[width=0.45\textwidth,height=0.30\textwidth]{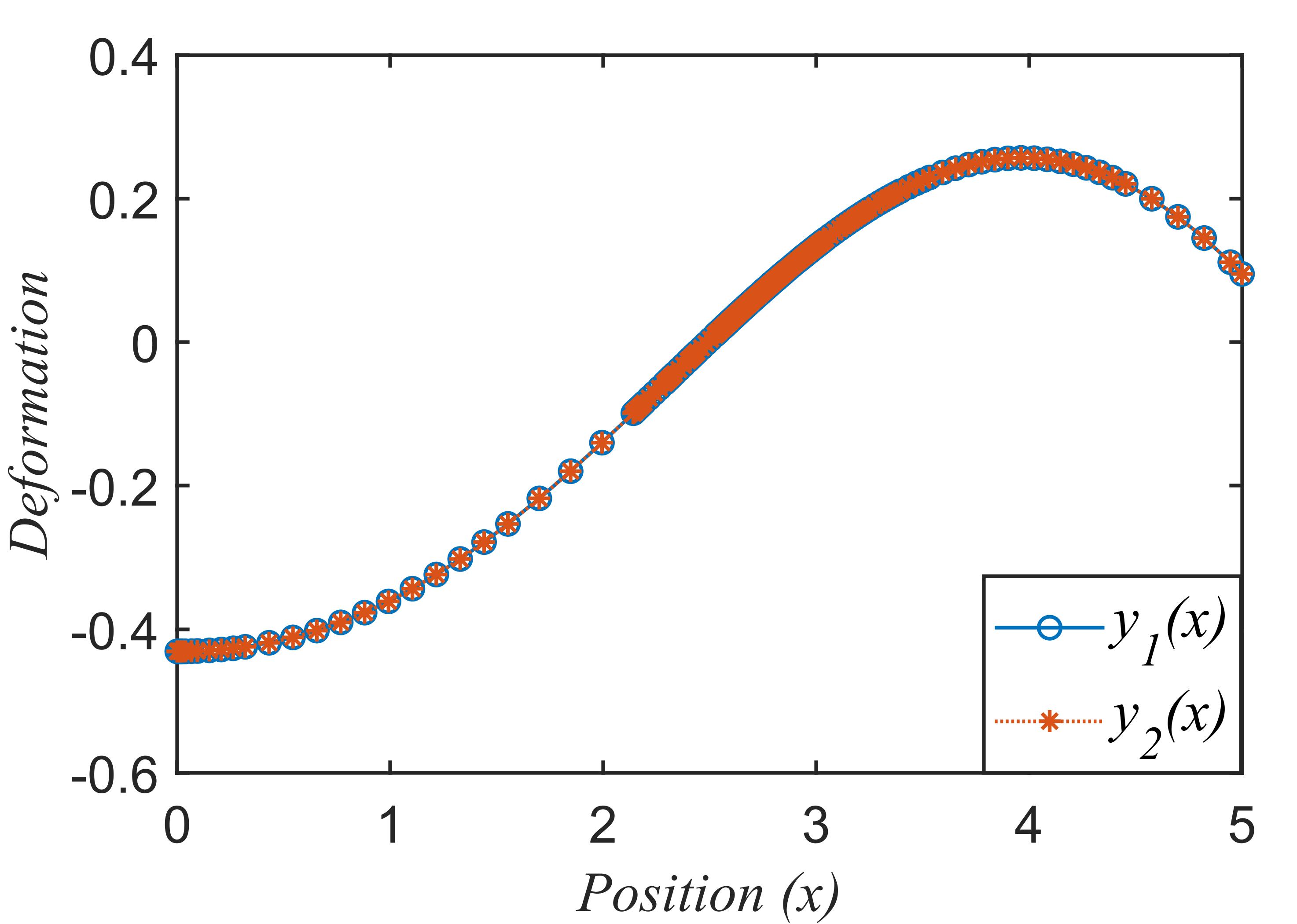}
}
\quad
\subfigure[substitution]{
\includegraphics[width=0.45\textwidth,height=0.30\textwidth]{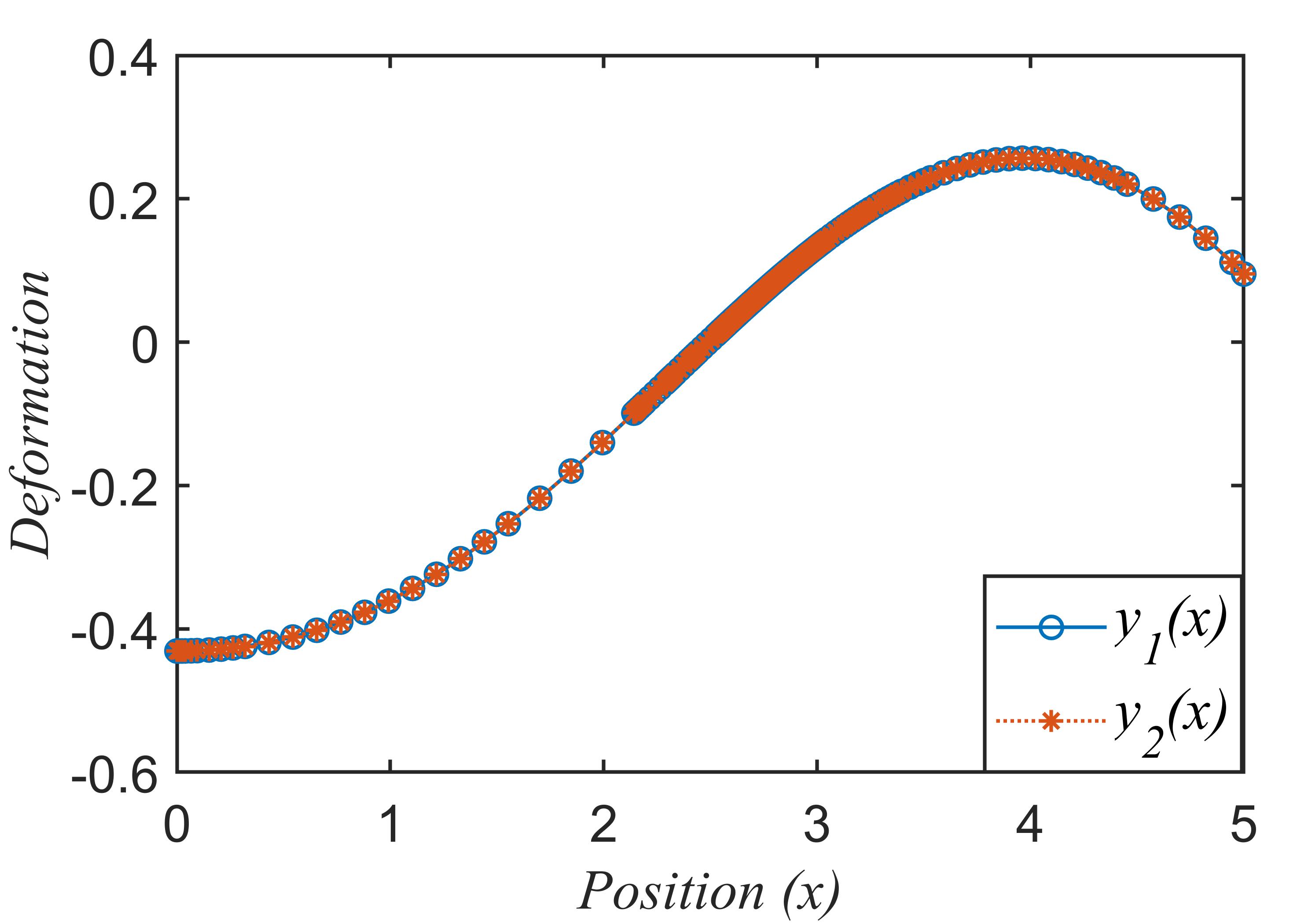}
}
\quad
\subfigure[augmentation]{
\includegraphics[width=0.45\textwidth,height=0.30\textwidth]{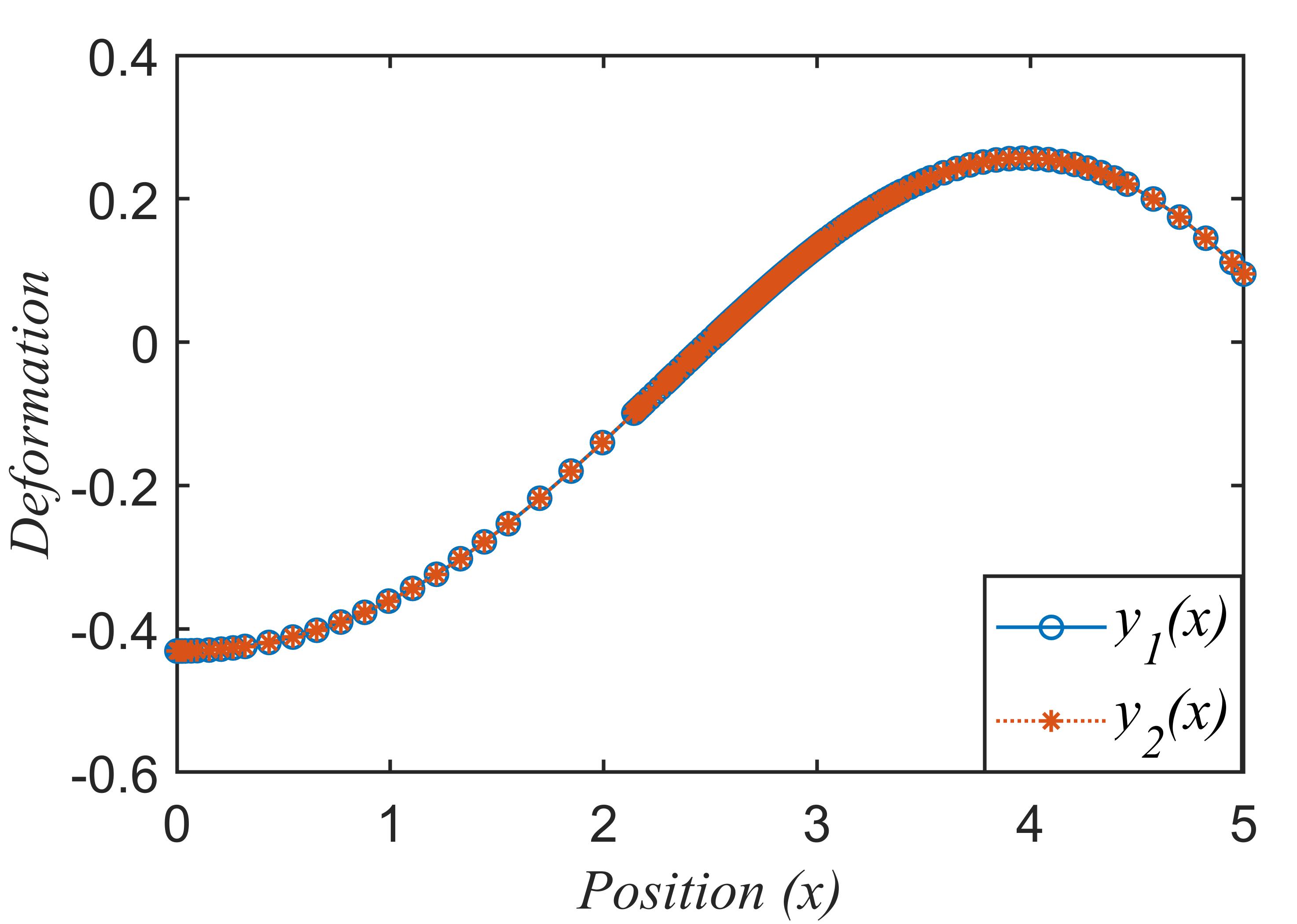}
}
\quad
\subfigure[IRE]{
\includegraphics[width=0.45\textwidth,height=0.30\textwidth]{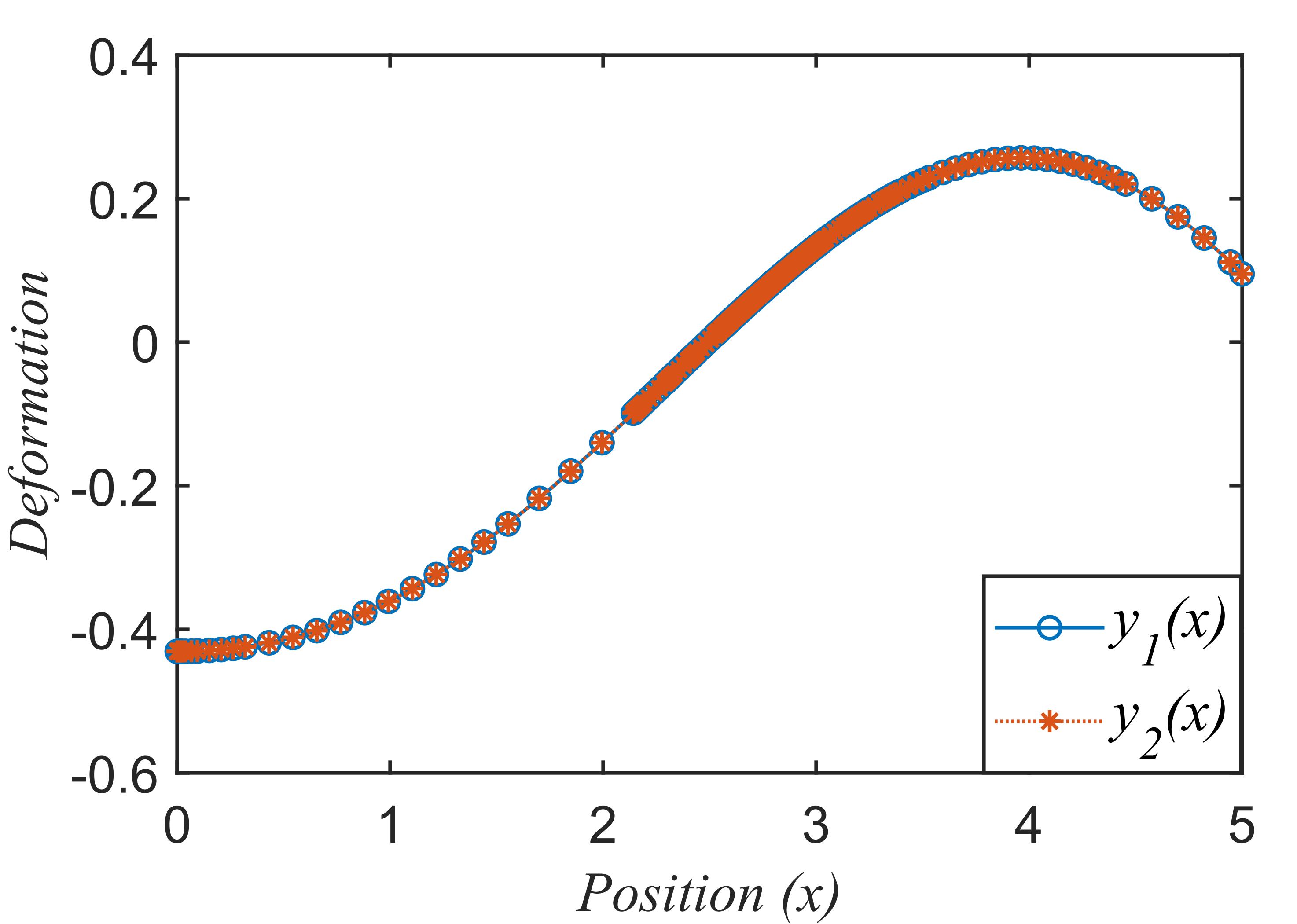}
}
\caption{Numerical Solution of Beam (Nonsingular Component)}\label{fig:beam}
\end{figure}


   \begin{figure}[htbp]
\centering
\subfigure[Pryce method]{
\includegraphics[width=0.45\textwidth,height=0.30\textwidth]{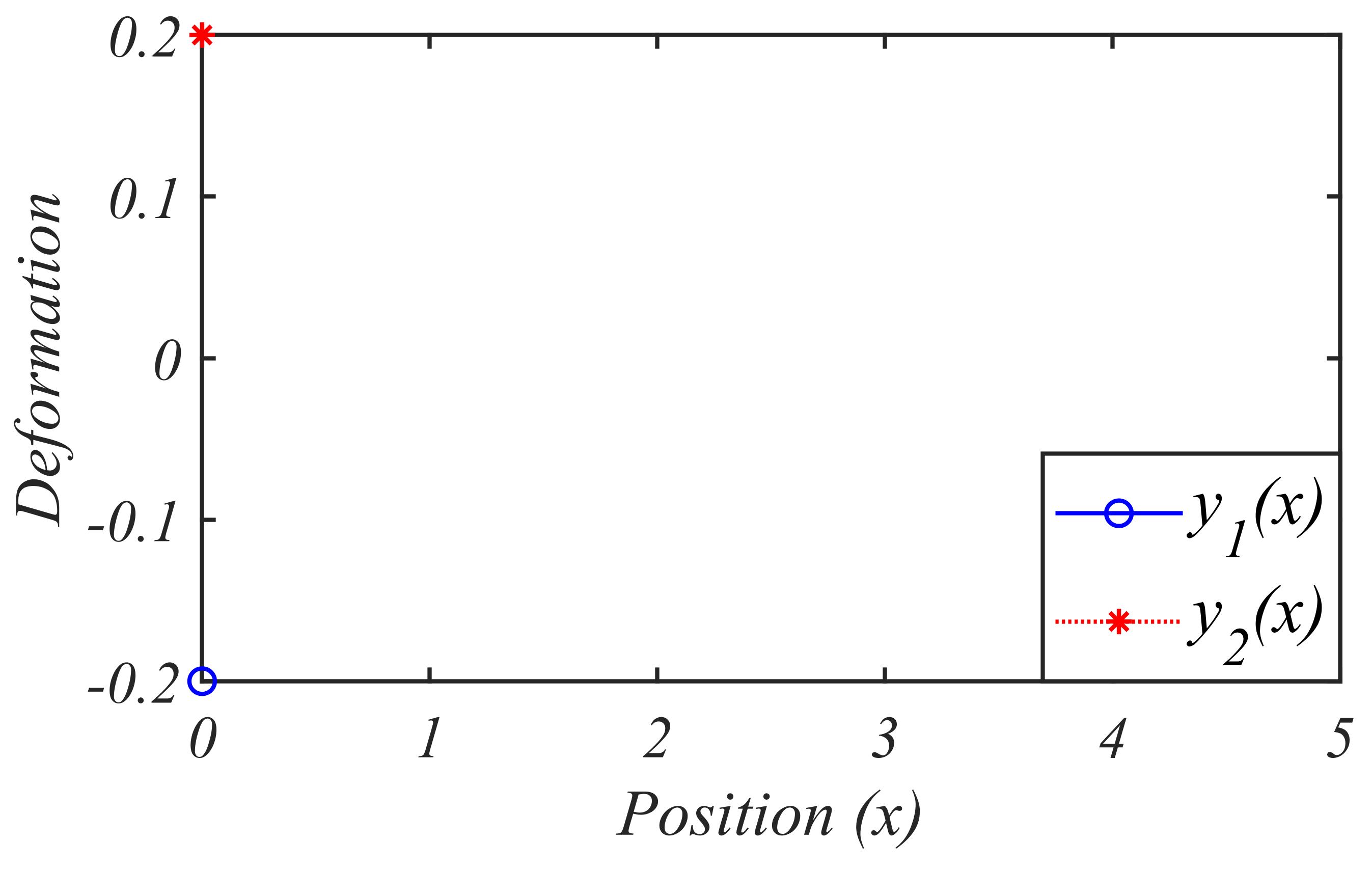}
}
\quad
\subfigure[substitution]{
\includegraphics[width=0.45\textwidth,height=0.30\textwidth]{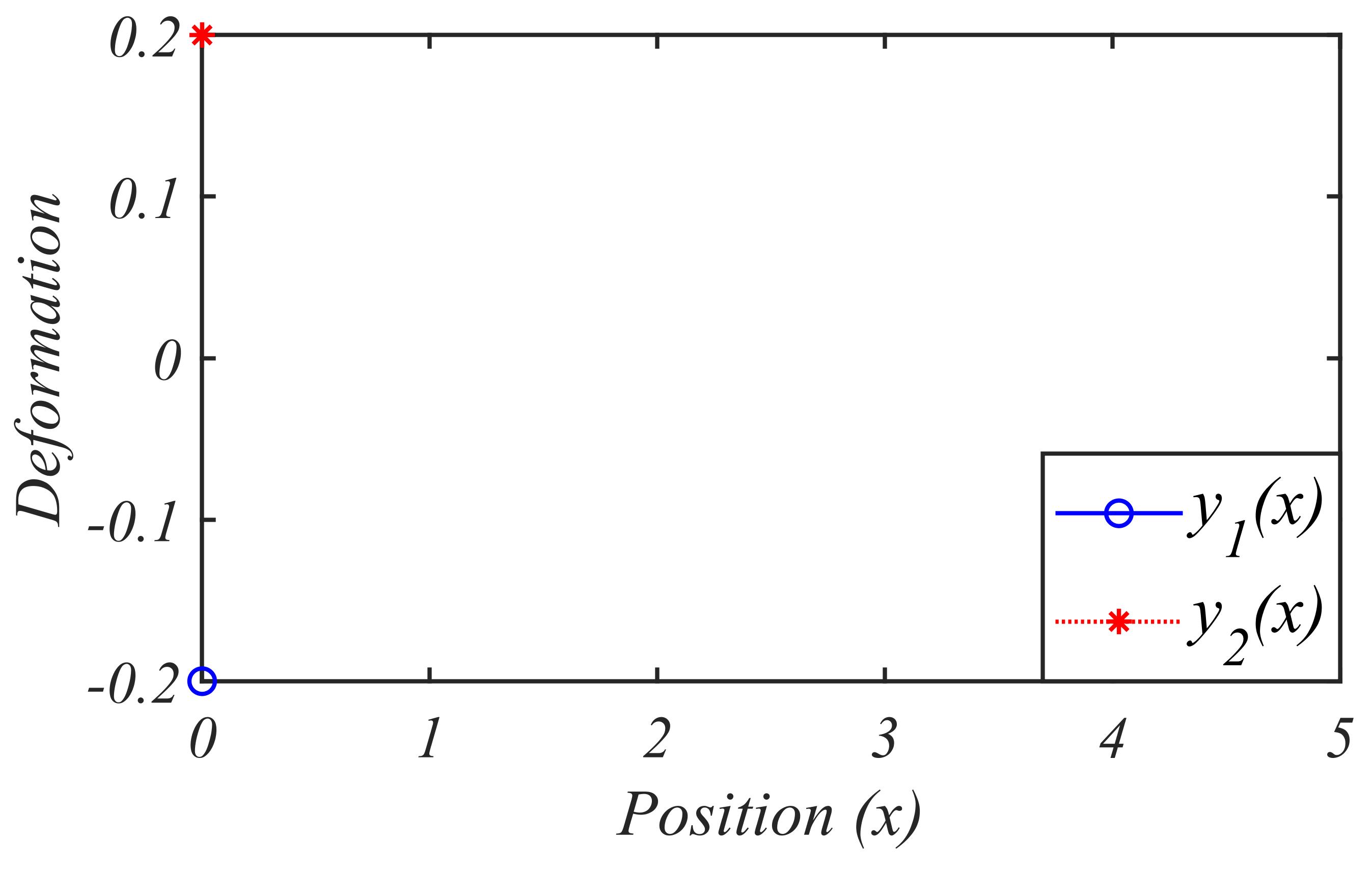}
}
\quad
\subfigure[augmentation]{
\includegraphics[width=0.45\textwidth,height=0.30\textwidth]{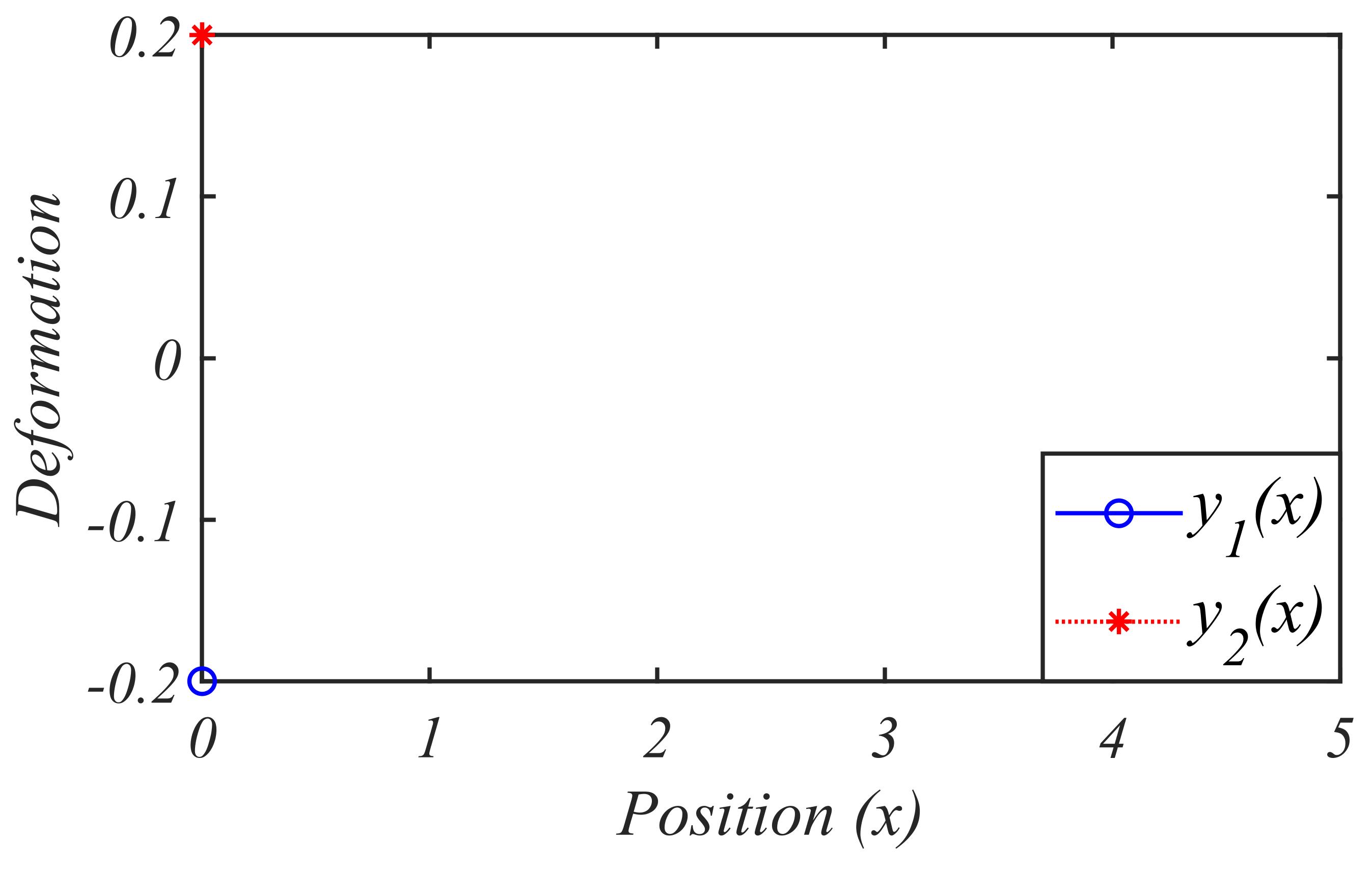}
}
\quad
\subfigure[IRE]{
\includegraphics[width=0.45\textwidth,height=0.30\textwidth]{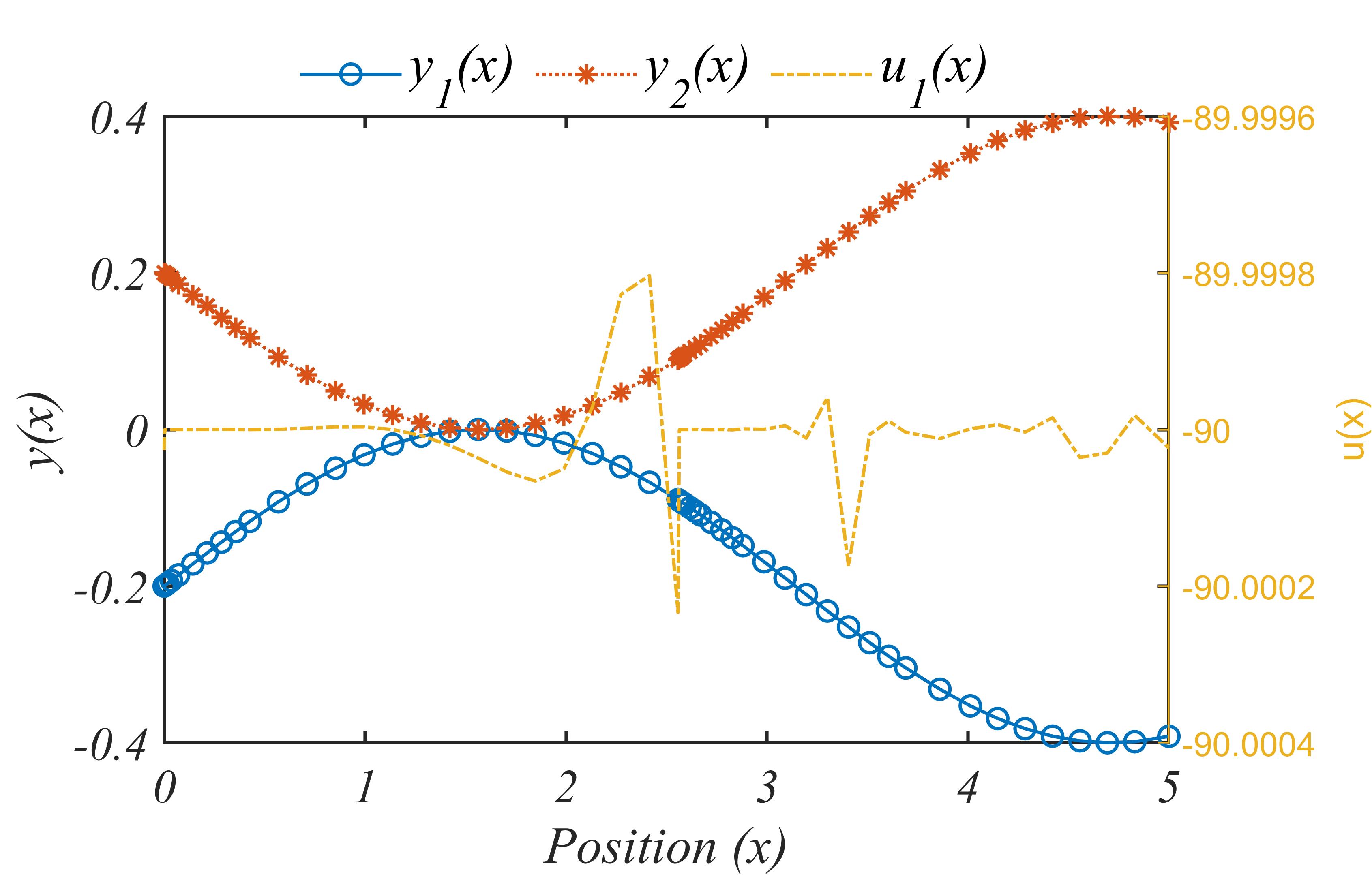}
}
\caption{Numerical Solution of Beam (Singular Component)}\label{fig:beam1}
\end{figure}

\subsection{Result Analysis}\label{ssec:numer}


We used MATLAB's ode$15$i as the ode solver of Algorithm $5$ for numerical solution of the examples in our paper.

By comparing Figures {\ref{fig:amp}}, {\ref{fig:pen}}, {\ref{fig:ring}}, it can be seen that the substitution method and the augmentation method are effective in dealing with  symbolic cancellation {\DAE}s, as well as the IRE method. Without index reduction, ode$15$i only works well when the index is low and its solutions of high index {\DAE}s become unstable.

In Figure \ref{fig:ex}, although this {\DAE} is low-index, the former three methods all fail at time $t=3.952847\times 10^{-4}$, as they cannot detect that the Jacobian matrix has been constrained to be singular. Furthermore, the Homotopy continuation method used in the IRE method helps to detect numerical degeneration by computing a constant rank of Jacobian matrix at witness points.

By the Homotopy continuation method, all possible consistent initial value paths can be tracked, for the two components in the bending deformation of beam (see Figures {\ref{fig:beam}}, {\ref{fig:beam1}}, respectively).
Thus, by the IRE method, structural information of each path can be obtained separately, and all real solutions of the {\DAE} can be approximated.

\section{Two Types of Challenge {\DAE}s for Structural s}\label{sec:spec}
\sloppy{}

Beside the degradation of the Jacobian matrix in the previous sections, the Pryce
method will also fail in dealing with unreduced models. Such
models may be due to unreduced descriptions of {\DAE}s in the modeling process,  e.g.  {\DAE}s with mixed
signature matrix or high multiplicity. Next, we will discuss how to apply the IRE method to solve
such {\DAE}s.

\subsection{Linear Recombination}\label{ssec:linear}
Here mixed signature matrix means that all rows of the signature matrix are exactly same. To produce this case, the original {\DAE} is multiplied by a non-singular constant matrix. Its structural information is hidden and it causes trouble for the Pryce method.
This type of {\DAE} belongs to the case of symbolic cancellation. Obviously, in theory, the IRE method can deal with this kind of case well. However, due to the missing structural information, it is necessary to call the IRE method several times.

Consider example \ref{ex:4}.  Suppose there is a matrix $\bm{A}=(1, 0; 1, 1)$, and the new {\DAE} is $\bm{A}\cdot \bm{F}$. Its structural information by the Pryce method is $\bm{c}=(0,0)$, $\bm{d}=(2,2)$. Compared with Section \ref{ssec:exam1} where the {\DAE} is missing $1$ hidden constraint equation, the new {\DAE} is missing $2$ additional hidden constraints caused by structural method failure.

Then we need to make additional calls of the IRE method to find missing hidden constraints.
In the first call, the size $n=2$ and rank $r=1$, so we can only find $n-r=1$ additional hidden constraint equations.
Then we need a second call, which yields size $n=3$ and rank $r=2$.  So the remaining $n-r=1$ additional hidden constraint equation has also been found.  However, because the original {\DAE} is missing $1$ hidden constraint equation, as in Section \ref{ssec:exam1}, a third call is necessary.  That yields size $n=5$ and rank $r=4$, and $n-r=1$ hidden constraint equation is found, yielding a full rank Jacobian.

Finally, the numerical solution in Figure \ref{fig:relinear} shows that the structure information of the new {\DAE} by the IRE method is correct and reliable.
Futhermore, it is easy to deduce that linear recombination cases can be handled well by IRE method.

\begin{figure}[htpb]
	\centering
	\includegraphics[height=6cm,width=8cm]{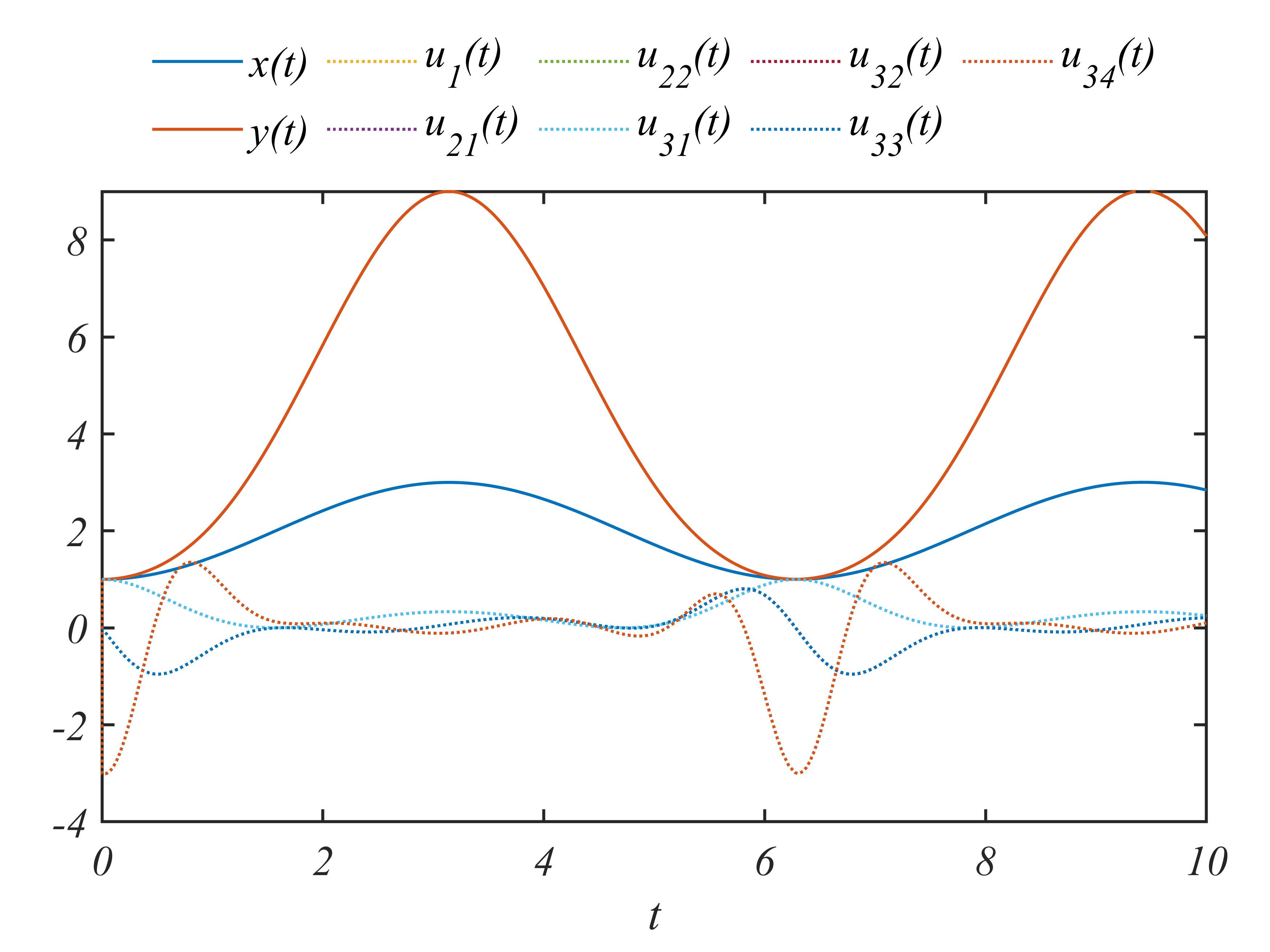}
	\caption{Numerical Solution of Linear Recombination Case from Example \ref{ex:4} }
	\label{fig:relinear}
\end{figure}

\subsection{{\DAE}s with High Multiplicity}\label{ssec:sqare}
A simple way to produce this case is that some equations of a {\DAE} are squared. It will also lead to the singularity of Jacobian matrix.
The determinant of the Jacobian is nonzero with a factor which is the constraint.  This is the case of numerical degeneration, and common factors lead to
redundancy.

Actually, due to the existence of redundant equations, Problem (\ref{LPP}) and Definition \ref{define_delta1} must be reformulated, which will lead to the invalidation of $\delta(\bm{G}) \leq  \delta(\bm{F}) - (n-r)$ in Theorem \ref{thm:result}.  But this is beyond the scope of this paper.  Instead we will present some interesting observations concerning the IRE method.

Note that for high multiplicity case, Theorem \ref{thm:result} can still guarantee the equivalence of {\DAE} before and after application of the IRE method.


Consider Example \ref{ex:4}, and suppose the constraint equation is replaced by $(y \left( t \right) -x^2 \left( t \right))^2$, whose  structural information is $\bm{c}=(0,2)$, $\bm{D}=(2,2)$.  Apparently the number of equations is $4$, the number of variables is $6$, and the optimal value is $6-4=2$.  But in fact, the rank of its equations is $2$.  Compared with Section \ref{ssec:exam1} where \ref{ex:4} is missing $1$ hidden constraint equation, the new {\DAE} is missing $4-2=2$ additional hidden constraint equations.

In a similar manner to Section \ref{ssec:linear}, we make multiple calls of IRE method to find hidden constraints of the new {\DAE}.  The first call of IRE yields $1$ hidden constraint equation of the new {\DAE}, with size $n=2$ and rank $r=1$.
In the second call, we also found $1$ hidden constraint equation of the new {\DAE}, with size $n=3$ and rank $r=2$.
However, because the original {\DAE} is missing $1$ hidden constraint equation, as in Section (\ref{ssec:exam1}), a third call is necessary for the new {\DAE}, and yields size $n=5$ and rank $r=4$. Finally, the last $1$ hidden constraint equation is found, and the final Jacobian is non-singular. After the structural method is applied, the numerical solution is shown in Figure \ref{fig:square}.

 \begin{figure}[htpb]
 	\centering
 	\includegraphics[height=6cm,width=8cm]{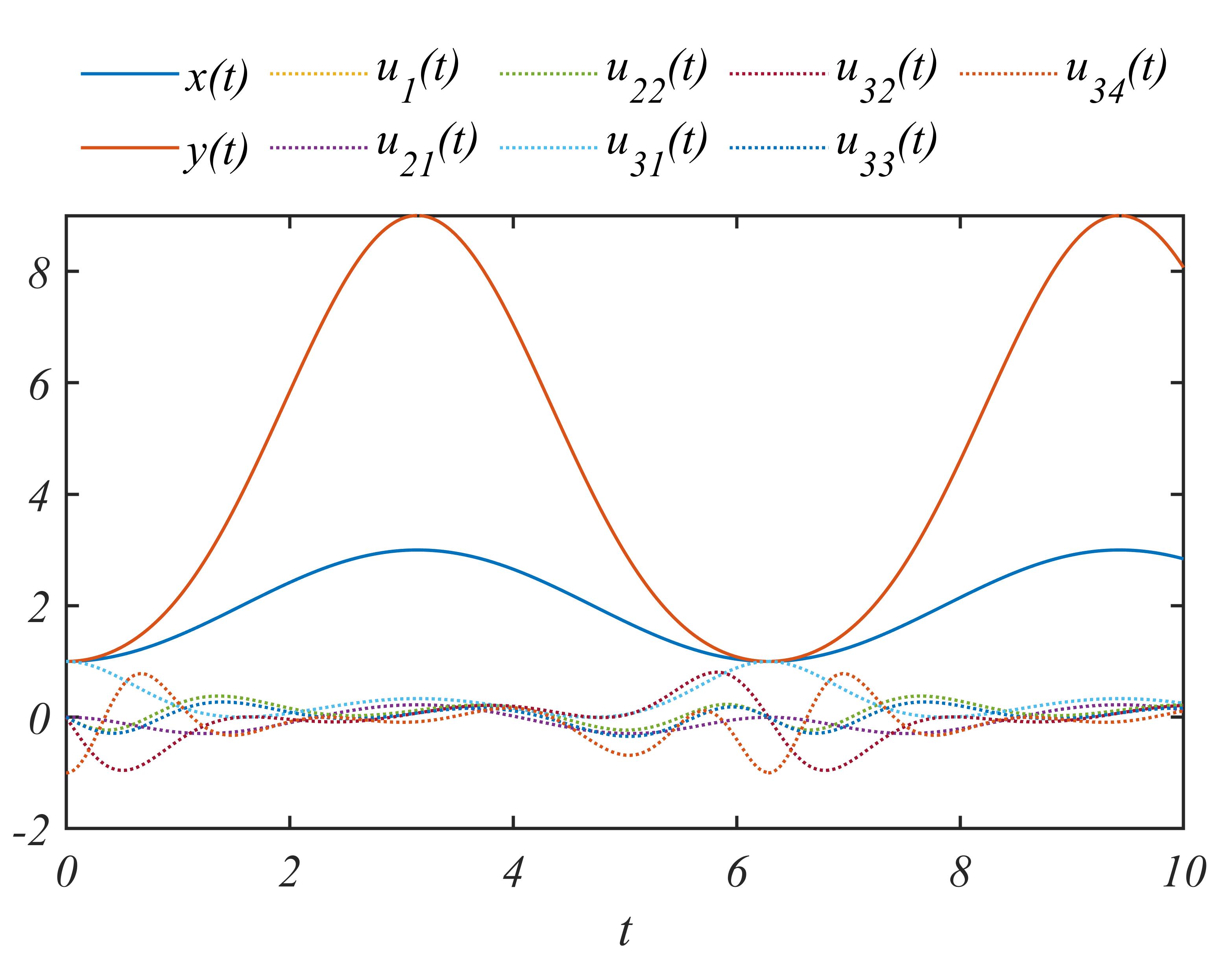}
 	\caption{Numerical Solution of Square Case from Example \ref{ex:4} }
 	\label{fig:square}
 \end{figure}

In this example, the IRE method can also find $n-r$ hidden constraint equations at once. In other words, IRE method has the possibility of regularizing the structure method, but further research is needed. From the perspective of hidden  constraints, we can redefine the optimal value as $\#var-rank(\bm{F})$, where $\#var$ is the number of variables. In Theorem \ref{thm:result}, we conjecture that a more general theorem can be obtained by replacing $\delta(\bm{G}) \leq  \delta(\bm{F}) - (n-r)$ by finding at least $n-r$ hidden constraints.

\section{Conclusions}\label{sec:con}
\sloppy{}

\begin{table}[htpb]
	\caption{Comparison of Experimental Results  }\label{tab:result}
 \#C = \#Components, \hskip2pt $\surd$ = success, \hskip2pt $\times$ = failure, \hskip2pt * = possible failure depending on Jacobian matrix, \hskip2pt Sub = Substitution, \hskip2pt Aug = Augmentation, \hskip2pt DE = Degeneration, \hskip2pt SC = symbolic cancellation \hskip2pt ND = numerical degeneration.
\centering
\begin{tabular}{|c|c|c|p{50pt}|c|c|c|c|p{20pt}|}
  \hline
  Index & Structure & \#C & Examples & ode$15$i & Sub & Aug & IRE & DE\\
  \hline
  low & linear& $1$ & Transistor Amplifier {\cite{Taihei19}} &$\surd$ & $\surd$ &   $\surd$ &  $\surd$ & SC\\
   \hline
  high & non-linear& $1$ &Modified  Pendulum {\cite{Taihei19}} & $\times$& $\surd$ &  $\surd$ &   $\surd$& SC\\
   \hline
  high & linear& $1$ & Ring Modulator {\cite{Taihei19}} & $\times$&   $\surd$ &   $\surd$ &   $\surd$& SC\\ \hline
  low & linear& $1$ &Example 1  &  $\times$& $\times$ & $\times$ &   $\surd$ & ND\\ \hline
  high & non-linear& $2$ &Beam & * & * & * &  $\surd$ & ND\\
  \hline
\end{tabular}
\end{table}

In this paper, we first gave a framework for improved structural methods in Section \ref{ssec:improved}.
In Section \ref{sec:implictit method} we proposed an improved structural method --- the IRE method ---  based on witness point techniques described in Section \ref{sec:det}.

The IRE method avoids the direct elimination of non-linear {\DAE}s in other improved structural methods by introducing new variables and equations to increase the dimensions of space in which the {\DAE} resides. The IRE method is efficient and intuitive, and enables the simultaneous regularization of all the equations of a  {\DAE}, rather than one specific equation at a time. The more rank deficiency, the higher efficiency, but the scale of the equation will also increase.  A strong feature of our approach, is that Homotopy continuation methods can be naturally and efficiently combined with the IRE method, which can help to deal with almost all degeneration issues for {\DAE}s.  Unlike the local equivalence methods, such as the substitution method and the augmentation method, the IRE method is proved to be a global equivalence method in Lemma \ref{lem:proj}.

To better demonstrate our methods, we describe specific algorithms in Section \ref{sec:alg}, and give $5$ numerical examples in Section \ref{sec:ex}.  The experimental results are summarized in Table \ref{tab:result}, which show that global structural differentiation method based on the IRE method can deal with symbolic cancellation {\DAE}s and numerically degenerated {\DAE}s whether these {\DAE}s are high-index or not.

In Section {\ref{sec:spec}} two challenging special cases are discussed whose structural information is wrong after application of structural methods.  In particular Section \ref{ssec:linear} considers the problem of linear
recommbinations and a modification of the IRE method is given which addresses this case.
Section \ref{ssec:sqare} considers the problem of {\DAE}s with high multiplicity.
 For this case, the IRE method still works on examples, but the approach lacks a theoretical justification, which is a problem for future research.

Although the IRE method performs well, it may fail when dealing with {\DAE}s with transcendental equations or strong non-linearity in applications. This is due to the limitation of the Homotopy method in solving the constraints of these {\DAE}s in which we can not find witness points on every component.  Indeed such problems may have infinitely many components, unlike polynomially nonlinear {\DAE}. Here, if a consistent initial point can be obtained by numerical iteration, the global structural differentiation method based on the IRE method can still give some solutions of the {\DAE}.

\section*{Acknowledgments}
We would like to acknowledge the assistance of  Taihei Oki in program codes.

\bibliographystyle{siamplain}
\bibliography{references}
\end{document}